\DeclareMathAlphabet{\mathpzc}{OT1}{pzc}{m}{it}
\definecolor{refkey}{gray}{.3}
\definecolor{labelkey}{gray}{.3}
\newcommand{\tG}{\widetilde{G}}
\newcommand{\talpha}{\tilde{\alpha}}
\newcommand{\AAA}{{\mathscr{A}}}
\newcommand{\rA}{\mathrm{A}}
\newcommand{\val}{{\nu}}
\newcommand{\Jump}{\mathrm{Jump}}
\newcommand{\rO}{\mathrm{O}}
\newcommand{\pr}{\mathrm{pr}}
\newcommand{\bfG}{\mathbf{G}}
\newcommand{\bfH}{\mathbf{H}}
\newcommand{\bfS}{\mathbf{S}}
\newcommand{\bfT}{\mathbf{T}}
\newcommand{\sfg}{\mathsf{g}}
\newcommand{\csfg}{{\check{\sfg}}}
\newcommand{\sfL}{\mathsf{L}}
\newcommand{\sfA}{\mathsf{S}}
\newcommand{\sfG}{\mathsf{G}}
\newcommand{\sfJ}{\mathsf{J}}
\newcommand{\sfM}{\mathsf{M}}
\newcommand{\sfP}{\mathsf{P}}
\newcommand{\sfQ}{\mathsf{Q}}
\newcommand{\sfS}{\mathsf{S}}
\newcommand{\sfV}{\mathsf{V}}
\newcommand{\sfW}{\mathsf{W}}
\newcommand{\sfX}{\mathsf{X}}
\newcommand{\sfY}{\mathsf{Y}}
\newcommand\iso{\xrightarrow{
   \,\smash{\raisebox{-0.65ex}{\ensuremath{\scriptstyle\sim}}}\,}}
\newcommand\riso{\xleftarrow{
   \,\smash{\raisebox{-0.65ex}{\ensuremath{\scriptstyle\sim}}}\,}}
\def\tt{\tilde{t}}
\def\rid{{\mathrm{id}}}
\def\ta{\tilde{a}}
\def\barff{{\overline{f}}}
\def\barww{{\overline{w}}}
\def\barkk{{\overline{k}}}
\def\Stab{{\rm Stab}}
\def\ad{{\rm ad}}
\def\Ad{{\rm Ad}}
\def\id{{\rm id}}
\def\inn#1#2{\left\langle 
      \def\ta{#1}\def\tb{#2}
      \ifx\ta\@empty{\;} \else {\ta}\fi ,
      \ifx\tb\@empty{\;} \else {\tb}\fi
      \right\rangle} 
\def\Sp{{\rm Sp}}
\def\rB{{\rm B}}
\def\rC{{\rm C}}
\def\rD{{\rm D}}
\def\rO{{\rm O}}
\def\det{{\rm det}}
\def\Cent{{\rm Z}}
\def\fff{\mathfrak{f}}
\def\bfff{{\overline{\mathfrak{f}}}}
\def\fgg{\mathfrak{g}}
\def\fhh{\mathfrak{h}}
\def\foo{\mathfrak{o}}
\def\fpp{\mathfrak{p}}
\def\fuu{\mathfrak{u}}
\def\fyy{\mathfrak{y}}
\def\cA{\mathscr{A}}
\def\sB{\mathscr{B}}
\def\sD{\mathscr{D}}
\def\sL{\mathscr{L}}
\def\sM{\mathscr{M}}
\def\sQ{\mathscr{Q}}
\def\sX{\mathscr{X}}
\def\cB{{\mathcal{B}}}
\def\rU{{\rm{U}}}
\def\cA{\mathcal{A}}
\def\cB{\mathcal{B}}
\def\cN{\mathcal{N}}
\def\cS{\mathcal{S}}
\def\bA{{\mathbb{A}}}
\def\bC{{\mathbb{C}}}
\def\bQ{{\mathbb{Q}}}
\def\bR{{\mathbb{R}}}
\def\bS{{\mathbb{S}}}
\def\bZ{{\mathbb{Z}}}
\def\GL{{\mathrm{GL}}}
\def\sfGKx{{\sfG(\kur)_{x}}}
\def\sfGx{{\sfG_x}}
\def\sfGxp{{\sfG'_{x'}}}
\def\fgl{{\mathfrak{gl}}}
\def\Id{\mathrm{Id}}
\def\Im{{\rm Im\,}}
\def\Hom{{\rm Hom}}
\def\End{{\rm End}}
\def\Mat{{\rm Mat}}
\def\ind{{\rm ind}}
\def\rank{\mathrm{rank}}
\def\Gal{\mathrm{Gal}}
\long\def\delete#1{}
\def\ceil#1{{\lceil #1 \rceil}}
\def\foo{{\mathfrak{o}}}
\def\sV{{\mathscr{V}}}
\def\iinn#1#2{\langle \!  \langle \, #1,#2 \, \rangle \! \rangle}
\def\Hom{{\mathrm{Hom}}}
\def\sS{{\mathscr{S}}}
\def\Stab{{\mathrm{Stab}}}
\def\Sp{{\mathrm{Sp}}}
\def\Mp{{\mathrm{Mp}}}
\def\rG{{\mathrm{G}}}
\def\kur{{k^{\text{ur}}}}
\def\Xllp{{{\mathsf{X}}_{\lambda,\lambda'}}}
\def\crho{{\check{\eta}}}
\def\Latt{{\mathrm{Latt}}}
\def\sLatt{{\mathrm{Latt}_V^\sharp}}
\def\quarter{\frac{1}{4}}
\def\half{\frac{1}{2}}
\def\fracmm{{\frac{1}{m}}}
\def\fracdmm{{\frac{1}{2m}}}
\def\fractdmm{{\frac{3}{2m}}}
\def\Fr{{\rm Fr}}
\def\dalpha{{\rm{d}\alpha}}
\def\Wy{{\mathbb{W}}}
\def\bw{{\bar{w}}}
\newcommand{\SSS}[1]{{\S#1}}
\def\zeroobj#1{\mathring{#1}}
\def\sfGE{{\zeroobj{\sfG}}} 
\def\sfGEp{{\zeroobj{\sfG'}}} 
\def\sfSLE{{\zeroobj{\mathsf{SL}}}}
\def\sfggE{{\zeroobj{\sfg}}}
\def\tsfW{{\zeroobj{\sfW}}}
\def\tlambda{{\zeroobj{\lambda}}}
\def\tlambdap{{\zeroobj{\lambda}'}}
\def\tww{{\zeroobj{w}}}
\def\tsfXll{{\zeroobj{\sfX}_{\tlambda,\tlambdap}}}
\def\tsfM{{\zeroobj{\mathsf{M}}}}
\def\tsfSl{{\zeroobj{\mathsf{S}}_\tlambda}}
\def\tbSw{{\zeroobj{\mathbb{S}}_{\tww}}}
\def\tbSl{{\zeroobj{\mathbb{S}}_{\tlambda}}}
\def\SpA{{\Sp_A}}
\def\omegaA{{\omega_A}}
\def\Bg{{B_{\fgg}}}
\def\Bgp{{B_{\fgg'}}}
\def\Vr{{V^{[r]}}}
\def\Vrhalf{{V^{[r+\half]}}}
\def\Vnr{{V^{[-r]}}} \def\sVr{{\sV^r}}
\newcommand{\sz}{-\fracdmm}
\newcommand{\sfXll}{{\sfX_{\lambda,\lambda'}}}
\newcommand{\sfSw}{{\sfS_{\bw}}}
\newcommand{\sfSl}{{\sfS_{\lambda}}}
\newcommand{\sfSlp}{{\sfS'_{\lambda'}}}
\newcommand{\bfSl}{{H_{\Gamma}}}
\newcommand{\bfSlp}{{H'_{\Gamma'}}}
\newcommand{\trSp}{{\triangle_\alpha(\sfSlp)}}
\newcommand{\tr}{\mathrm{tr}}
\newcommand{\diag}{\mathrm{diag}}
\newcommand{\bfY}{\mathbf{Y}}
\newtheorem{thm}[subsubsection]{Theorem}
\newtheorem{lemma}[subsubsection]{Lemma}
\newtheorem{prop}[subsubsection]{Proposition}
\crefname{prop}{Proposition}{Propositions}
\Crefname{prop}{Proposition}{Propositions}
\newtheorem{cor}[subsubsection]{Corollary}
\newtheorem*{prop*}{Proposition}
\theoremstyle{definition}
\newtheorem{definition}[subsubsection]{Definition}
\def\remark{\noindent{\bf Remark. }}
\def\pV{{\breve{V}}} 
\def\sE{{\mathscr{E}}}
\def\VE{{V(E)}}
\def\DE{{D(E)}}
\def\fooDE{{\foo_{\DE}}}
\def\fffDE{{\fff_{\DE}}}
\def\sLE{{\sL_E}}
\def\sLEp{{\sL'_E}}
\def\sLEz{{\sL^0_E}}
\def\sLEzp{{\sL'^0_E}}
\def\piSigma{{\pi_\Sigma}}
\def\piSigmap{{\pi'_{\Sigma'}}}
\crefname{cnd}{Condition}{Conditions}
\crefname{case}{case}{cases}
\newcounter{myenumi}
\def\savemyenumi{\setcounter{myenumi}{\value{enumi}}}
\def\resumemyenumi{\setcounter{enumi}{\value{myenumi}}}
\newcommand{\trivial}[2][]{\if\relax\detokenize{#1}\relax
{\color{orange} \vspace{0em} The following is trivial:  #2}
\else 
\ifx#1h\relax\else {\red Wrong argument!} \fi
\fi
}
\author{Hung Yean Loke}
\address{Hung Yean Loke, Department of Mathematics,
National University of Singapore,
Block S17, 10 Lower Kent Ridge Road, Singapore 119076}
\email{matlhy@nus.edu.sg}
\author{Jia-jun Ma}
\address{Jia-jun Ma, Unit 408, Academic Building No.1,
The Institute of Mathematical Sciences,
The Chinese University of Hong Kong,
Shatin, N.T., Hong Kong}
\email{jjma@ims.cuhk.edu.hk}
\author{Gordan Savin}
\address{Gordan Savin, Department of Mathematics
        University of Utah
        Salt Lake City, UT 84112}
\email{savin@math.utah.edu}
\subjclass{22E46, 22E47}
\keywords{local theta correspondence, stable vector, moment map, epipelagic
  supercuspidal representation}
\title[Correspondences between epipelagic representations]{Local
  theta correspondences between epipelagic supercuspidal representations}
\begin{document}

\maketitle

\begin{abstract}
  In this paper we study the local theta correspondences between epipelagic
  supercupsidal representations of a type I classical dual pair
  $(G,G')$ over $p$-adic fields.  We show that, besides an exceptional case, an
  epipelagic supercupsidal representation $\pi$ of $\widetilde{G}$ lifts to an
  epipelagic supercupsidal representation $\pi'$ of $\widetilde{G}'$ if and only
  if the epipelagic data of $\pi$ and $\pi'$ are related by the moment maps.
\end{abstract}

\maketitle

\section{Introduction}

\subsection{} \label{sec:N}  Let $k$ be $p$-adic field with ring of 
integers $\foo_k$, prime ideal $\fpp_k = (\varpi)$ and residue field~$\fff$ of odd characteristic~$p$ where $\varpi$ is a fixed
uniformizer. Let $\bar{k}$ be its algebraic closure and let $\kur$ be  the maximal unramified extension of $k$. Let $\val \colon \bar{k} \to \bQ \cup \set{\infty}$ denote the
valuation map where $\val(\varpi) = 1$. 
We will denote an algebraic variety by a boldface letter, say $\bfH$. For an algebraic extension $E$ of $k$, we will denote its
$E$-points by $\bfH(E)$, and its $k$ points by the corresponding normal letter
$H$. 

If $\bfG$ is an algebraic group, then we let $\fgg$ be its Lie algebra
or the $k$ points of the Lie algebra, depending on the context. If
$G = \bfG(k)$ acts on a set $X$, then ${}^g x$ will denote $g\cdot x$
for $g\in G$ and $x\in X$. In order to simplify the situation, we
always assume that $p$ is  sufficiently large compared to the rank
of $G$. For an reductive algebraic group, we will fix a maximally $k$-split
torus $\bfS$, a maximally $\kur$-split torus $\bfT$ which is defined
over $k$ and contains $\bfS$. Since $\bfG(\kur)$ is always
quasi-split, we set $\bfY := \Cent_\bfG(\bfT)$ to be the fixed Cartan
subgroup of $\bfG$

\subsection{}
We recall the classification of irreducible type~I reductive dual pairs.  Let
$D$ be a division algebra over $k$ with a fixed involution $\tau$, which is
either \begin{inparaenum}[(i)]
\item  $k$,
\item  a
quadratic field extension of $k$, or
\item the quaternion algebra over $k$.
\end{inparaenum}
We continue to use $\val$ to denote the unique extension of the
valuation $\val$ from $k$ to $D$.  Let $\foo_D$ be the ring
of integers of $D$.  Let $V$ be a right $D$-module with an
$\epsilon$-Hermitian sesquilinear  form $\inn{}{}_V$ and
$G = \bfG(k) = \rU(V, \inn{}{}_V)$ be the unitary group preserving
$\inn{}{}_V$.  Similar notation applies to $G'$ with $\epsilon' = -\epsilon$.
The $k$-vector space $W = V \otimes_D V'$
has a natural symplectic form and $(G,G')$ is an irreducible type~I
reductive dual pair in $\Sp:=\Sp(W)$.

We will let $\widetilde{E}$ be the inverse image of a subgroup $E$ in the
metaplectic $\bC^\times$-cover $\Mp$ of $\Sp$. 
We fix a non-trivial additive character $\psi \colon k \rightarrow \bC^\times$ with conductor $\fpp_k$
and consider local theta correspondence $\theta$ arising from the
oscillator representation of $\Mp$ with respect to the
character $\psi$. 
For general information on theta correspondences, see \cites{Ho,Wa,GT}.
In this paper, we will investigate the theta correspondence between
epipelagic supercuspidal representations of $\tG$ and $\tG'$. In many
situations, the roles of $G$ and $G'$ are interchangeable. In such
cases, we only discuss  $G$ and extend all objects and 
notation to $G'$ implicitly by adding `primes'.

\subsection{} \label{sec:SD}
We briefly review some facts about epipelagic supercuspidal
representations. See \Cref{sec:Epi} and \cite{RY} for more
details. Let $\cB(\bfG,k)$ be the (extended) building of $G$. In the
setting of type~I dual pairs, $G$ and $G'$ have compact
centers. Therefore $\Cent(G)\subseteq G_{x}$ and $G_x = G_{[x]}$ for
any $x\in \cB(\bfG,k)$ where $[x]$ is the image of $x$ in the reduced
building.  Following Reeder-Yu~\cite{RY}, we construct a tamely
ramified irreducible supercuspidal representation $\pi_\Sigma$ of
$\tG$ in \Cref{sec:CEpi} from the data $\Sigma = (x,\lambda,\chi)$
where \crefname{SD}{SD}{SD} \crefformat{SD}{(SD#2#1#3)}
\begin{enumerate}[(SD1)]
\item \label[SD]{SD1} $x\in \cB(\bfG,k)$ is an epipelagic point of order $m$
  where $p \nmid m$ (c.f. \Cref{def:epipoint}),
\item \label[SD]{SD2}$\lambda$ is a stable vector in
  $\fgg_{x,-\fracmm}/\fgg_{x,-\fracmm^+}$ and
\item \label[SD]{SD3} $\chi$ is a character of the stabilizer
  $\sfS_\lambda$ of $\lambda$ in $G_x/G_{x,0^+}$.
\end{enumerate}
We call $\Sigma = (x,\lambda,\chi)$ an {\it epipelagic data} of order
$m$ for $G$.  The supercuspidal representation~$\pi_\Sigma$ has depth
$\frac{1}{m}$ and it is called an {\it epipelagic supercuspidal
  representation} of $\tG$. Since the cover $\tG$ depends on the dual
pair $(G,G')$, the notation $\pi_\Sigma$ only makes sense relative to
the dual pair $(G,G')$.

\subsection{} \label{S14}
Let $\Sigma = (x, \lambda, \chi)$ and
$\Sigma' = (x', \lambda', \chi')$ be epipelagic data of $G$ and $G'$
of order $m$ and $m'$ respectively. Suppose
$\theta(\pi_\Sigma) = \pi'_{\Sigma'}$.  By \cite{Pan,Pan02}, $\piSigma$ and
$\piSigmap$ have the same depth $\frac{1}{m} = \frac{1}{m'}$, i.e.
$m = m'$. It turns out that the data $(x,\lambda,\chi)$ and
$(x',\lambda',\chi')$ are related by a geometric picture which we now
briefly explain.

\def\sfMm{{\sfM_{-\fracdmm}}}
\def\sfMmp{{\sfM'_{-\fracdmm}}}
The points $x$ and $x'$ correspond to
self dual $\foo_D$-lattice functions $\sL$ and $\sL'$ in $V$ and $V'$
respectively (c.f. \cite{BT4,BS,Le,GY}). The tensor product $\sB = \sL \otimes \sL'$ is a self dual
$\foo_k$-lattice
function in $W$. The quotient  $\sfX = \sB_{-\frac{1}{2m}}/ \sB_{-\frac{1}{2m}^+}$ is an $\fff$-vector
space. In \eqref{eq:MM.s} we define the moment maps $\sfMm$ and $\sfMmp$: 
\[
\xymatrix@C=3em{
\fgg_{x,-\fracmm} & \ar[l]_<>(.5){\sfMm} \sfX \ar[r]^<>(.5){\sfMmp} & 
\fgg'_{x',-\fracmm}.
}
\]

Our first result is a refinement of special cases of \cite{Pan}.

\begin{prop}[\Cref{prop:M.w}]
Suppose $\theta(\piSigma)  = \piSigmap$. Then 
\begin{equation} \label[cnd]{cnd:M}\tag{M}
\parbox{0.8\textwidth}{
there exists a $\bar{w}  \in \sfX$ such that $\lambda = \sfMm(\bw)$ and
$\lambda' = - \sfMmp(\bw)$ }.
\end{equation}
\end{prop}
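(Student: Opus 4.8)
The plan is to extract the required element $\bar w$ from the explicit construction of the theta correspondence between $\pi_\Sigma$ and $\pi'_{\Sigma'}$, using the compatibility of the Weil representation with the lattice filtration on $W = V \otimes_D V'$. Since $\theta(\pi_\Sigma) = \pi'_{\Sigma'}$, there is a nonzero $\tG \times \tG'$-equivariant map $\omega_\psi \twoheadrightarrow \pi_\Sigma \boxtimes \pi'_{\Sigma'}$, where $\omega_\psi$ is the oscillator representation of $\Mp(W)$. First I would realize $\pi_\Sigma$ and $\pi'_{\Sigma'}$ via the Reeder--Yu construction as compactly induced from characters of parahoric-type subgroups determined by $(x,\lambda,\chi)$ and $(x',\lambda',\chi')$; in particular the central character data of $\pi_\Sigma$ on the depth-$\tfrac{1}{m}$ piece $\fgg_{x,\fracmm}/\fgg_{x,\fracmm^+}$ is given by pairing with $\lambda$ (via the Killing-type form), and likewise for $\pi'_{\Sigma'}$ with $\lambda'$. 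The key point is that $\sL \otimes \sL'$ controls how the Heisenberg group of $W$ sits over the product of these Moy--Prasad filtrations, so that the depth-$\tfrac{1}{2m}$ layer $\sfX = \sB_{-\fracdmm}/\sB_{-\fracdmm^+}$ carries a Weil representation of the finite symplectic group attached to it, and the moment maps $\sfMm, \sfMmp$ are precisely the maps recording how the two factors $\fgg_{x,-\fracmm}$ and $\fgg'_{x',-\fracmm}$ act on $\sfX$.

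The heart of the argument is then a Jacquet-module / restriction computation. I would restrict the nonzero map above to the compact-open subgroup $\wtG_{x,0^+} \times \wtG'_{x',0^+}$ and analyze the resulting pairing between the minimal $K$-types. Concretely, $\fgg_{x,\fracmm}$ acts on $\omega_\psi$ through its image in the Heisenberg group of $W$, which factors through the quotient $\sfX$; decomposing $\omega_\psi$ restricted to that Heisenberg group, the character through which $\fgg_{x,\fracmm}$ acts on a given Heisenberg--Weil vector $v \in \omega_\psi$ attached to $\bar w \in \sfX$ is exactly $\psi(\langle \sfMm(\bar w), \cdot \rangle)$, and symmetrically $\fgg'_{x',\fracmm}$ acts by $\psi(\langle \sfMm'(\bar w), \cdot\rangle)$ — but the relative sign in the tensor $W = V\otimes_D V'$ (the fact that $\epsilon' = -\epsilon$, which makes the symplectic form antisymmetric) forces the $G'$-side character to be $\psi(-\langle \sfMm'(\bar w),\cdot\rangle)$. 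Matching this against the known central characters of $\pi_\Sigma$ (which is $\psi(\langle\lambda,\cdot\rangle)$) and of $\pi'_{\Sigma'}$ (which is $\psi(\langle\lambda',\cdot\rangle)$) gives $\lambda = \sfMm(\bar w)$ and $\lambda' = -\sfMm'(\bar w)$ for whichever $\bar w$ indexes a vector that survives to the quotient $\pi_\Sigma\boxtimes\pi'_{\Sigma'}$ — and such a $\bar w$ exists precisely because the map is nonzero.

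I would organize the write-up by first recalling from \Cref{sec:Epi} the precise statement that the minimal $K$-type of $\pi_\Sigma$ is the character $\psi\circ\langle\lambda,-\rangle$ on $\wtG_{x,\fracmm}/\wtG_{x,\fracmm^+}$, then proving a lemma that identifies the restriction of $\omega_\psi$ to $\wtG_{x,\fracmm}\times\wtG'_{x',\fracmm}$ in terms of the finite Weil representation on $\sfX$ and the moment maps (this is where the lattice-function bookkeeping of $\sB = \sL\otimes\sL'$, and the earlier references \cite{BT4,BS,Le,GY}, do the real work), and finally combining the two via Frobenius reciprocity for the nonzero $\tG\times\tG'$-map. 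The main obstacle I anticipate is the second step: pinning down exactly which layer of the oscillator representation realizes the finite Weil representation on $\sfX$, and verifying the sign conventions so that the correspondence produces $-\sfMm'(\bar w)$ rather than $+\sfMm'(\bar w)$ — this requires care with the identification of $\fgg_{x,\fracmm}$ inside the Heisenberg group and with the normalization of $\psi$ (conductor $\fpp_k$) and of the moment maps in \eqref{eq:MM.s}. Once the sign is correct, the existence of $\bar w$ is a formal consequence of nonvanishing of the theta map, so condition \eqref{cnd:M} follows.
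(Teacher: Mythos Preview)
Your proposal correctly identifies the key local computation --- that $G_{x,\fracmm}$ acts on a function in $\sS(A)_B$ supported near $\bw$ via the character $\psi_{\sfM(\bw)}$, and similarly for $G'$ with a sign --- and this computation does appear in the paper (equation \eqref{eq:gf}). However, the paper uses it only for the \emph{converse} direction (the proof of \Cref{thm:main0}), not for this proposition. For the proposition itself the paper simply invokes Pan's theorem \cite{Pan}, which asserts that whenever $\theta(\piSigma)=\piSigmap$ there exist points $(y,y')$ and $w\in (\sL_y\otimes\sL'_{y'})_{-\fracdmm}$ with $M(w)+\fgg_{y,-\fracmm^+}$ and $-M'(w)+\fgg'_{y',-\fracmm^+}$ unrefined minimal $K$-types of $\piSigma$ and $\piSigmap$; then \Cref{prop:MKT} and $G\times G'$-equivariance of $M,M'$ let one conjugate $(y,y')$ to $(x,x')$.

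Your sketch, by contrast, tries to reprove Pan's theorem directly at the given points $(x,x')$, and this is where it has a genuine gap. From the nonvanishing of $\omega_\psi\to\piSigma\boxtimes\piSigmap$ you correctly deduce (via Frobenius reciprocity and supercuspidality) that $\sS(A)$ contains a nonzero $\psi_\lambda\boxtimes\psi_{\lambda'}$-isotypic vector for $G_{x,\fracmm}\times G'_{x',\fracmm}$. But the moment-map identification \eqref{eq:gf} is only valid for functions supported on $B=\sB_{-\fracdmm}$: for $w\notin B$ one need not have $(g^{-1}-1)w\in A$, so $G_{x,\fracmm}$ does not act on the corresponding delta functions by characters, and there is no a priori reason the isotypic vector lies in $\sS(A)_B$. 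Establishing that it does (or that the isotypic component outside $\sS(A)_B$ vanishes) is precisely the content of Pan's analysis, and is not a formality. Your phrase ``such a $\bar w$ exists precisely because the map is nonzero'' hides this step. If you want to avoid citing \cite{Pan}, you must supply this argument --- it requires a careful decomposition of $\sS(A)$ under the joint action of the two Moy--Prasad subgroups, not just the computation on $\sS(A)_B$.
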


\Cref{cnd:M} imposes severe restriction to
the ranks of $G$ and $G'$:
\begin{prop}[\Cref{prop:RSP}] \label{prop:I.RSP} Suppose
  $\theta(\pi_\Sigma) =
  \pi'_{\Sigma'}$.
  Then $(\bfG,\bfG')$ or $(\bfG',\bfG)$ is one of the following types:
  
\begin{inparaenum}[(i)]
\item $(\rD_n,\rC_n)$, 
\item $(\rC_n,\rD_{n+1})$,
\item $(\rC_n,\rB_n)$,
\item $(\rA_n,\rA_n)$ or 
\item $(\rA_n,\rA_{n+1})$.
\end{inparaenum}
\end{prop}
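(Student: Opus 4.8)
The plan is to derive the type restriction from Condition~\ref{cnd:M} by a rank count: the existence of a vector $\bw \in \sfX$ mapping to the stable vectors $\lambda$ and $-\lambda'$ under the moment maps forces $\dim_\fff \sfX$ to be at least as large as what is needed to have stable vectors in both $\fgg_{x,-\fracmm}$ and $\fgg'_{x',-\fracmm}$, while the tensor-product structure $\sfX = (\sL\otimes\sL')_{-\frac{1}{2m}}/(\cdots)^+$ bounds $\dim_\fff \sfX$ from above in terms of the graded pieces of $\sL$ and $\sL'$. First I would unwind, using the dictionary between epipelagic points and self-dual lattice functions (c.f. \cite{BT4,BS,GY}), what an epipelagic point of order $m$ looks like for each isogeny type: the lattice function $\sL$ corresponds to a periodic filtration of $V$, the associated graded $\mathrm{gr}\,\sL$ is a module over the residue algebra, and the reductive quotient $\sfG_x = G_x/G_{x,0^+}$ together with its grading $\fgg_{x,\bullet}$ is a graded Lie algebra of the form attached to a $\bZ/m$-grading (a Moy--Prasad/Vinberg datum). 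The condition that $\fgg_{x,-\fracmm}$ \emph{contains a stable vector} (in the sense of geometric invariant theory for the $\sfG_x$-action, as used by Reeder--Yu) is extremely restrictive; combined with $p\nmid m$ it essentially pins down, for each Dynkin type of $\bfG$, which $m$ and which $x$ are allowed, via the classification of stable gradings.

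Next I would carry out the core estimate. Write $\sfX = \bigoplus$ of tensor factors indexed by pairs of graded pieces $\mathrm{gr}^a \sL \otimes \mathrm{gr}^b \sL'$ with $a+b \equiv -\tfrac{1}{2m}$ in the appropriate sense; the moment map $\sfMm$ is, up to the form, a partial contraction $\sfX \to \fgg_{x,-\fracmm} = \mathrm{Hom}$-type space built from $\mathrm{gr}\,\sL$. Surjectivity-type information from Condition~\ref{cnd:M}---namely that a \emph{single} $\bw$ simultaneously hits a stable vector on each side---means the stabilizer-dimension inequalities for stable vectors on both sides must hold simultaneously for one common $\bw$. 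Quantitatively this turns into an inequality relating the dimensions $\dim_D V$, $\dim_D V'$ (equivalently the ranks $n$, $n'$ of $\bfG$, $\bfG'$) together with $m$; the point is that a stable vector for $\fgg_{x,-\fracmm}$ forces $\dim \fgg_{x,-\fracmm} \geq \dim \sfG_x - \dim(\text{stabilizer})$, and the tensor structure forces $\dim\sfX$ to grow like a product while $\dim\fgg_{x,-\fracmm}$ and $\dim\fgg'_{x',-\fracmm}$ grow additively in the ranks---so the two can only be compatible in a short list of boundary cases. Running this inequality through the finitely many Dynkin types and the allowed values of $m$ leaves exactly the pairs $(\rD_n,\rC_n)$, $(\rC_n,\rD_{n+1})$, $(\rC_n,\rB_n)$, $(\rA_n,\rA_n)$, $(\rA_n,\rA_{n+1})$ (and their swaps), where I would also use that $\epsilon' = -\epsilon$ to rule out the same-sign combinations and that the residue-field invariants (the form type over $\fff$) must match up across the tensor product.

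The main obstacle I expect is the bookkeeping of the \emph{gradings}: making the identification between ``epipelagic point $x$ of order $m$'', the periodic lattice chain, and the resulting $\bZ/m$-graded Lie algebra precise enough that the dimension of $\fgg_{x,-\fracmm}$ and the stable-vector condition can be read off uniformly across types $\rA,\rB,\rC,\rD$ --- in particular handling the half-integer index $-\tfrac{1}{2m}$ on the $W$-side versus the index $-\tfrac1m$ on the $V$- and $V'$-sides, and the interaction of the two lattice-function periods $m$ (they must be equal, by the depth-matching already recorded in \S\ref{S14}, but the alignment of the two filtrations inside $\sB = \sL\otimes\sL'$ still needs care). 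Once the grading dictionary is set up, the inequality itself is a finite check. I would organize the proof as: (1) translate $x$, $x'$ to lattice functions and record $\dim_\fff\sfX$, $\dim_\fff \fgg_{x,-\fracmm}$, $\dim_\fff\fgg'_{x',-\fracmm}$ in terms of $n$, $n'$, $m$ and the multiplicities of the graded pieces; (2) invoke the classification of stable $\bZ/m$-gradings to constrain $m$ and the multiplicity pattern on each side; (3) feed Condition~\ref{cnd:M} in as the compatibility inequality for a common $\bw$; (4) enumerate the surviving cases and match form types and signs, arriving at the list.
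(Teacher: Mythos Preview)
Your proposed dimension-counting approach has a genuine gap: it misses the actual mechanism that constrains the types, and the inequality you describe does not cut the right way. You argue that $\dim_\fff\sfX$ grows ``like a product'' in the ranks while $\dim\fgg_{x,-\fracmm}$ and $\dim\fgg'_{x',-\fracmm}$ grow ``additively'', and that compatibility of these forces a short list. But if the source is large compared to both targets, that makes it \emph{easier}, not harder, for a single $\bw$ to hit prescribed points under $\sfM$ and $\sfM'$; dimension considerations alone will not rule out, say, $(\rC_n,\rB_{n+5})$. The obstruction is not a size constraint on $\sfX$; it is a spectral compatibility between $M(w)$ and $M'(w)$.

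The paper's proof proceeds via a different and sharper observation: a stable vector $\lambda$ lifts to a \emph{regular semisimple} element $\Gamma\in\fgg_{x,-\fracmm}$ (this is \Cref{lem:reg}, proved by passing through the Vinberg picture). Hence the hypothesis reduces to the existence of $w\in W$ with $M(w)$ and $M'(w)$ simultaneously regular semisimple. After base change to $\barkk$ one uses classical invariant theory to identify $W/(G\times G')\cong \fyy/\Wy$ and to realize the induced map on invariants as the naive inclusion $\Upsilon_\fyy\colon\fyy\hookrightarrow\fyy'$ (padding by zeros). The question becomes: does there exist $X\in\fyy$ with $P(X)\neq 0$ and $P'(\Upsilon_\fyy(X))\neq 0$? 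Since $\Upsilon_\fyy$ appends zero coordinates, and the discriminant polynomials for types $\rB$ and $\rC$ contain the factor $\prod_j a_j^2$, any rank gap larger than the ones listed forces $P'(\Upsilon_\fyy(X))=0$ identically. This eigenvalue argument is what produces the list; your grading/rank bookkeeping does not see it. If you want to salvage your outline, the missing step is precisely to prove and then exploit \Cref{lem:reg}, after which the classification of stable gradings and the delicate alignment of filtrations become unnecessary.
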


Let $(x,\lambda)$ and $(x',\lambda')$ be parts of data for $G$ and $G'$
of order $m$ respectively satisfying (SD1) and (SD2). It turns
out that Condition~\eqref{cnd:M}, in all but one exceptional case
(see \Cref{case:E} in \Cref{sec:thetaII}), is a sufficient
condition for the epipelagic supercuspidal representation $\piSigma$
to lift to an epipelagic supercuspidal representation of $\tG'$.
For the ease of explaining in this introduction, we will omit the exceptional case.
Using \cref{cnd:M}, we will construct a group homomorphism
$\alpha \colon \sfS_{\lambda'}' \rightarrow \sfS_\lambda$ in
\Cref{lem:Xll}. We can now state a part of the main \Cref{thm:main0}.

\delete{\begin{thm} \label{thm:intro}
  Let $(G,G')$ be an irreducible type~I reductive dual pair such that
  $(\bfG,\bfG')$ has the form (i)-(v) in
  \Cref{prop:I.RSP}.  Let $(x,\lambda)$ and $(x',\lambda')$ be data of
  $G$ and $G'$ respectively of order $m$ satisfying \cref{SD1},
  \cref{SD2} and \cref{cnd:M}.  Suppose we are not in the
    exceptional Case~\eqref{case:E}. Then for every character
  $\chi$ of $\sfSl$,
\[
\theta( \piSigma) = \piSigmap
\]
where $\Sigma = (x,\lambda,\chi)$,
$\Sigma' = (x',\lambda',\chi^*\circ \alpha)$ and $\chi^*$ is the
contragredient of $\chi$. In particular the theta lift is nonzero.
\end{thm}

We state a converse of the above  theorem.

\begin{cor} \label{cor:Converse}
Suppose $\theta(\piSigma) = \piSigmap$ where $\Sigma = (x,\lambda, \chi)$ and $\Sigma' = (x', \lambda',\chi')$ are epipelagic data, and $(G,G')$ and $\Sigma$ do not belong to the exceptional case (E). Then
\begin{enumerate}[(i)]
\item Condition (M) is satisfied so that $\alpha \colon \sfS_{\lambda'}' \rightarrow \sfS_\lambda$ is well-defined and

\item $\chi' = \chi^* \circ \alpha$.
\end{enumerate}
\end{cor}}

\begin{thm} \label{thm:intro}
  Let $(G,G')$ be an irreducible type~I reductive dual pair such that
  $(\bfG,\bfG')$ has the form (i)-(v) in
  \Cref{prop:I.RSP}.  Let $(x,\lambda)$ and $(x',\lambda')$ be data of
  $G$ and $G'$ respectively of order $m$ satisfying \cref{SD1},
  \cref{SD2}. We assume that we are not in the
    exceptional \Cref{case:E}
	\begin{enumerate}[(i)]
	\item
	Suppose \cref{cnd:M} is satisfied. Then for every character
  $\chi$ of $\sfSl$,
\[
\theta( \piSigma) = \piSigmap
\]
where $\Sigma = (x,\lambda,\chi)$,
$\Sigma' = (x',\lambda',\chi^*\circ \alpha)$ and $\chi^*$ is the
contragredient of $\chi$. In particular the theta lift is nonzero.
\item Conversely, suppose $\theta(\piSigma) = \piSigmap$ where $\Sigma = (x,\lambda, \chi)$ and $\Sigma' = (x', \lambda',\chi')$ are epipelagic data. Then
\begin{enumerate}[(a)]
\item Condition (M) is satisfied so that $\alpha \colon \sfS_{\lambda'}' \rightarrow \sfS_\lambda$ is well-defined and

\item $\chi' = \chi^* \circ \alpha$.
\end{enumerate}	
\end{enumerate}
\end{thm}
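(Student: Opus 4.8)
The plan is to reduce everything to an explicit computation of Jacquet modules with respect to a suitable filtration, following the philosophy of Reeder–Yu's construction and Pan's depth-preservation results. First I would set up the relevant compact-induction picture: by construction $\piSigma = \mathrm{c\text{-}Ind}_{\widetilde{G}_x}^{\widetilde{G}} (\rho_\Sigma)$ for an explicit finite-dimensional representation $\rho_\Sigma$ of the parahoric $\widetilde{G}_x$ inflated from a representation of the finite group $G_x/G_{x,0^+}$ twisted by the linear character $\lambda$ of $\fgg_{x,\fracmm}/\fgg_{x,\fracmm^+}$ (via $\psi$), and similarly for $\piSigmap$. The key geometric input is the self-dual lattice function $\sB = \sL\otimes\sL'$ on $W$ and the associated filtration of the Heisenberg/oscillator model; the space $\sfX = \sB_{-\fracdmm}/\sB_{-\fracdmm^+}$ carries the two moment maps $\sfMm,\sfMmp$ of the displayed diagram. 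I would use the standard fact (mixed model / lattice model of the Weil representation) that the Jacquet-type functor cutting the oscillator representation at depth $\fracdmm$ produces, as a module for $G_x \times G'_{x'}$ acting through the finite reductive quotients, the space of functions on $\sfX$, with the two $\psi$-twisted actions governed precisely by $\sfMm$ and $\sfMmp$. Concretely, for (i): $\Hom_{\widetilde{G}_{x,0^+}\times\widetilde{G}'_{x',0^+}}(\text{oscillator},\, \psi_\lambda\boxtimes\psi_{-\lambda'})$ should be nonzero exactly when there is $\bw\in\sfX$ with $\sfMm(\bw)=\lambda$, $\sfMmp(\bw)=-\lambda'$ — i.e. \Cref{cnd:M} — and the residual action of the stabilizers $\sfSl\times\sfSlp$ on this (one-dimensional, after fixing such a $\bw$ up to the relevant group action) Hom-space is through the character $\chi\boxtimes(\chi^*\circ\alpha)$, where $\alpha$ is the homomorphism from \Cref{lem:Xll}. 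Feeding this into a Frobenius-reciprocity / Mackey computation for the compactly induced representations, together with Kudla's persistence and the fact that both sides are supercuspidal of the same depth (so the theta lift, once nonzero, is irreducible and of this explicit form by \cite{Pan,Pan02}), yields $\theta(\piSigma)=\piSigmap$.

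For part (ii), the converse, I would argue that if $\theta(\piSigma)=\piSigmap$ then by \Cref{prop:M.w} (the Proposition quoted from \Cref{prop:M.w} in the excerpt) Condition \eqref{cnd:M} holds, which immediately gives (a) and makes $\alpha$ well-defined via \Cref{lem:Xll}. For (b), nonvanishing of $\Hom_{\widetilde{G}\times\widetilde{G}'}(\omega, \piSigma\boxtimes\piSigmap)$ combined with the Jacquet-module computation above forces the stabilizer characters to match up: the same Hom-space that detected \eqref{cnd:M} is acted on by $\sfSl\times\sfSlp$ through $\chi\boxtimes\chi'$ on one side and $\chi\boxtimes(\chi^*\circ\alpha)$ on the other, so $\chi'=\chi^*\circ\alpha$. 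One must be careful that the reduction from the full theta lift to the parahoric-level Hom-space is reversible — this uses again that both representations are epipelagic supercuspidal, so their restrictions to the relevant parahorics detect the whole representation, and that the depth-$\fracdmm$ cut is the first nonzero layer.

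The main obstacle I anticipate is the precise identification of the residual stabilizer action on the Hom-space with the character $\chi\boxtimes(\chi^*\circ\alpha)$, rather than some twist of it by a quadratic character coming from the metaplectic cover (the Weil index / Maslov-type cocycle on $\sfX$). Keeping track of the $\widetilde{\cdot}$-covers on both $G$ and $G'$, and checking that the genuine characters are normalised so that the splitting over $G_x$, $G'_{x'}$ is compatible with the tensor decomposition $\sB=\sL\otimes\sL'$, is exactly where the one exceptional \Cref{case:E} will have to be excluded — there the relevant form on $\sfX$ degenerates (or the moment-map fiber is not a torsor under a connected group), so the clean ``one-dimensional Hom-space with an explicit character'' description fails. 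A secondary technical point is that the finite-group representation theory needs the stabilizer $\sfSl$ to act on the moment-map fiber over $\lambda$ with a single orbit through $\bw$; this is where stability of $\lambda$ (SD2) and \Cref{prop:I.RSP} restricting the dual pair to types (i)–(v) are used, so I would invoke those results rather than reprove them.
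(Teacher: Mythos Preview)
Your approach to part (i) is essentially the paper's: use the lattice model $\sS(A)$ for $A=\sB_{\fracdmm}$, identify the functions supported on $B=\sB_{-\fracdmm}$ with $\bC[\sfX]$, compute the $G_{x,\fracmm}\times G'_{x',\fracmm}$-action via the moment maps (this is the key computation \eqref{eq:gf} in the paper), decompose $\bC[\sfXll]$ as an $\sfSl\times\sfSlp$-module, and finish by Frobenius reciprocity and supercuspidality. Two small corrections: the relevant space $\bC[\sfXll]$ is not one-dimensional but carries the regular representation of $\sfSl$ (by \Cref{lem:Xll}(i),(iii), $\sfXll$ is a single free $\sfSl$-orbit outside \Cref{case:E}); and your diagnosis of \Cref{case:E} is off --- the issue there is not a degeneration of a form on $\sfX$ but that $\sfSl$ no longer acts \emph{freely} on $\sfXll$ (the stabiliser $\bSw=\bSl$ is nontrivial), so only half of the characters of $\sfSl$ occur in $\bC[\sfXll]$. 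Your worry about a quadratic twist from the cover is handled in the paper by the canonical splitting $\omega_0$ of \Cref{prop:liftSpA}, which is how $\piSigma$ is defined in the first place (\Cref{sec:CEpi}); no extra Weil-index correction appears.

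For part (ii)(b) your route diverges from the paper and contains the gap you yourself flag. You want to argue that nonvanishing of $\Hom_{\tG\times\tG'}(\omega,\piSigma\boxtimes\piSigmap)$ forces the $\sfSlp$-character to be $\chi^*\circ\alpha$ by ``reversing'' the parahoric-level Hom computation. But that computation only exhibits a particular subspace of $\sS(A)$; it does not show that \emph{every} copy of $\piSigma\boxtimes\piSigmap$ inside $\omega$ must meet it. Making this reversible would require a Mackey-type exhaustion argument over all double cosets, which you have not supplied. The paper bypasses this entirely: having proved (i), it knows $\theta(\piSigma)=\pi'_{\Sigma''}$ with $\Sigma''=(x',\lambda',\chi^*\circ\alpha)$; the hypothesis $\theta(\piSigma)=\piSigmap$ together with Howe duality (uniqueness of the theta lift, \cite{Wa,GT}) gives $\pi'_{\Sigma'}\cong\pi'_{\Sigma''}$, and then \Cref{prop:MKT}(ii) forces $\chi'=\chi^*\circ\alpha$. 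This is both shorter and avoids the reversibility problem.
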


\Cref{thm:main0} also contains a result for the exceptional
Case~\eqref{case:E} where $\pi_\Sigma$ lifts only for half of the
characters of~$\sfSl$. This should be compared with \cite{Moen93}.

\subsection*{Acknowledgment}
We would like to thank Wee Teck Gan and Jiu-Kang Yu for their valuable
comments. Hung Yean Loke is supported by a MOE-NUS AcRF Tier 1 grant
R-146-000-208-112.  Jia-Jun Ma is partially supported by ISF Grant 1138/10 during his postdoctoral  Fellowship at Ben Gurion University and HKRGC Grant CUHK 405213 during his postdoctoral fellowship in IMS of CUHK. Gordan Savin is supported by an NSF grant DMS-1359774.

\section{Epipelagic representations} \label{sec:Epi}

In this section we review Reeder-Yu's construction of epipelagic supercuspidal
representations (c.f. \cite{RY}). 

\subsection{} Let $k$ be a $p$-adic field as in \Cref{sec:N}.  
Let $\kur$ be the
maximal unramified extension of $k$ with residue field $\bfff$. 
We let $\Fr$ denote the Frobenius element such that
$\Gal(\kur/k) = \Gal(\bfff/\fff) = \langle \Fr \rangle$.

\subsection{Epipelagic points}\label{sec:EpiP}  Let $\bfG$ be an algebraic
group defined over $k$. Let $E$ be a tamely ramified extension of $k$. 
For $x\in \cB(\bfG,k) \subseteq \cB(\bfG,E)$, we set up some notation which will be used
in the rest of the paper. 
Let  
\begin{itemize}
\item 
$\bfG(E)_{x,r}$ ($r\geq 0$) and $\fgg(E)_{x,r}$ be the Moy-Prasad filtrations corresponding to $x$.  
\item $\bfG(E)_x = \Stab_{\bfG(E)}(x)$ and $\bfG(E)_{[x]} =
  \Stab_{\bfG(E)}([x])$ where $[x]$ is the image of $x$ in the reduced
  building;
\item  $\sfG_x(E) = \bfG(E)_x/\bfG(E)_{x,0^+}$;
\item 
$\sfG_{x,r}(E) =  \bfG(E)_{x,r:r^+} := \bfG(E)_{x,r}/\bfG(E)_{x,r^+}$, and
\item $\sfg_{x,r}(E) =  \fgg(E)_{x,r:r^+} := \fgg(E)_{x,r}/\fgg(E)_{x,r^+}$.
\end{itemize}
In order to abbreviate the notations of objects corresponding to
$G = \bfG(k)$, we let
\begin{itemize}
\item $G_{x,r} := \bfG(k)_{x,r} = \bfG(\kur)_{x,r} \cap \bfG(k)$, 
\item $\fgg_{x,r} := \fgg(k)_{x,r} = \fgg(\kur)_{x,r} \cap \fgg(k)$,
\item {$G_{x,r:r'} := \bfG(k)_{x,r}/\bfG(k)_{x,r'}$ and  $\fgg_{x,r:r'} := \fgg(k)_{x,r}/\fgg(k)_{x,r'}$ for $r < r'$.}
\end{itemize}
In order to abbreviate the notations of objects corresponding to
$\bfG(\kur)$, we let
\begin{itemize}
\item $\sfG_x := \sfG_x(\kur)$, $\sfG_{x,r} := \sfG_{x,r}(\kur)$ and
  $\sfg_{x,r} := \sfg_{x,r}(\kur)$.
\end{itemize}

The quotient space $\sfg_{x,r}$ is an $\bfff$-vector space and we
denote its dual space $\Hom_{\bfff}(\sfg_{x,r}, \bfff)$ by
$\csfg_{x,r}$. We have assumed in the introduction that $p$ is large
compared to the rank of $G$.  Then by \cite[Prop. 4.1]{AR},
$\csfg_{x,r}$ could be identified with $\sfg_{x,-r}$ via an invariant
bilinear form on $\fgg$. Since we are only treating classical groups
and $p \neq 2$, we will use a trace form defined later in
\Cref{def:Parings} in this paper.
The group $\sfG_x$ acts on $\csfg_{x,r}$. 
A vector $\lambda \in \csfg_{x,r}$ is called a \emph{stable vector} if the $\sfG_x$-orbit of $\lambda$ is Zariski closed in $\csfg_{x,r}$ and the stabilizer of $\lambda$ in $\sfG_x$ modulo $\Cent(\bfG(\kur))_{0:0^+}$ is a finite group.

\medskip

Let {$\Psi_{\kur}$} be the set of affine $\bfT(\kur)$-roots.  Suppose
$x \in \cA(\bfS, k) = \cA(\bfT,\kur)^\Fr$, i.e. $x$ is in the apartment
defined by $\bfS$.  Let $r(x)$ be the smallest positive value in
$\set{ \psi(x) | \psi \in \Psi_{\kur} }$. Then
$\bfG(\kur)_{x,0^+} = \bfG(\kur)_{x,r(x)}$.

\begin{definition} \label{def:epipoint} Let $m$ be an integer where
  $p \nmid m$.  A point $x$  in the apartment
  $\cA(\bfS,k)$ is called an {\it epipelagic point} of order $m$ if
  $r(x) = \frac{1}{m}$ and $\csfg_{x,\fracmm}$ contains a stable vector.
\end{definition}

By \cite{Gross}, $m$ is an even integer except when $\bfG(\kur)$ is split
and of type $\rA_{m-1}$. In particular~$m \geq~2$.

\def\rx{{r(x)}}

\subsection{Epipelagic supercuspidal representations} \label{sec:Epirep} 
Let $x$ be an epipelagic point
of order $m$ so that $r(x) = \fracmm$.  We fix an isomorphism of
abelian groups $c \colon G_{x,\rx:\rx^+} \rightarrow \fgg_{x,\rx:\rx^+}$.
For the classical group, we choose the isomorphism to be the one
induced by the Cayley transform $c(g) = 2(g-1)(g+1)^{-1}$ so that $g-1 \equiv
c(g) \pmod{\fgl(V)_{x,r(x)^+}}$.

\def\Jx{{G_{x,\fracmm}}}
\def\Jxp{{G_{x,\fracmm^+}}}

Let $\lambda \in \csfg_{x,\fracmm}^{\Fr}$ be an $\fff$-rational
functional on $\sfg_{x,\fracmm}$.  Then we get a character
\[
\psi_\lambda := \psi \circ \lambda \colon 
G_{x,\fracmm:\fracmm^+ } = \sfg_{x,\fracmm}(k) \cong \sfg_{x,\fracmm}^\Fr 
\xrightarrow{\ \lambda\ } \fff \xrightarrow{\ \psi\ }
\bC^\times.
\]
The inflation of $\psi_{\lambda}$ to $\Jx$ will also be denoted by
$\psi_{\lambda}$.

Let
\begin{equation} \label{eq1}
H_{x,\lambda} :=\Stab_{G_{[x]}}(\lambda), \ \  \sfA_{x,\lambda} := H_{x,\lambda}/\Jx.
\end{equation}
Then $H_{x,\lambda}$ is the stabilizer of $\psi_\lambda$ in~$G_{[x]}$.

\medskip

We will assume that $\lambda$ is $\sfG_x$-stable in
$\csfg_{x,\frac{1}{m}}$. In all the cases that we will consider in
this paper, $\sfA_{x,\lambda}$ is a finite abelian group. 
Note that the order of $\sfS_{x,\lambda}$ is prime to $p$ and $\Jx$ is pro-$p$.

\begin{prop} The group  $\sfA_{x,\lambda}$ splits in $H_{x,\lambda}$, i.e.
$H_{x,\lambda} = \sfA_{x,\lambda} \ltimes \Jx$. 
\end{prop}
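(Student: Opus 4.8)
The plan is to show that the short exact sequence
\[
1 \to \Jx \to H_{x,\lambda} \to \sfA_{x,\lambda} \to 1
\]
splits by exhibiting a subgroup of $H_{x,\lambda}$ that maps isomorphically onto $\sfA_{x,\lambda}$. The key structural facts are the two coprimality observations stated just before the proposition: $\Jx = G_{x,\fracmm}$ is a pro-$p$ group, while $\sfA_{x,\lambda}$ (being a subquotient of $\sfG_x$, whose relevant part $\sfS_{x,\lambda}$ has order prime to $p$; in the cases treated $\sfA_{x,\lambda}$ is moreover finite abelian) has order prime to $p$. So this is essentially a Schur–Zassenhaus situation, and the main work is to arrange a genuine (not merely profinite-limit) complement inside the locally compact group $H_{x,\lambda}$.

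First I would recall that $H_{x,\lambda} \subseteq G_{[x]}$, and that $G_{[x]}/\Jxp$ is the group of $\fff$-points of the reductive quotient $\sfG_x$ (up to the component-group contribution from $G_{[x]}$ versus $G_x$), in particular a finite group; while $\Jxp = G_{x,\fracmm^+}$ is pro-$p$. Thus $H_{x,\lambda}$ sits in an extension
\[
1 \to \Jxp \to H_{x,\lambda} \to \overline{H} \to 1
\]
with $\Jxp$ pro-$p$ and $\overline{H}$ finite, and moreover $\overline{H}$ itself is an extension of the finite group $\sfA_{x,\lambda}$ (of order prime to $p$) by the finite $p$-group $\Jx/\Jxp = \sfg_{x,\fracmm}(\fff)$. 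The strategy is: (1) the finite group $\overline{H}$ has, by Schur–Zassenhaus, a complement $A_1 \cong \sfA_{x,\lambda}$ to its normal $p$-Sylow $\Jx/\Jxp$, giving a subgroup $H_1 \le H_{x,\lambda}$ with $H_1 \supseteq \Jxp$, $H_1/\Jxp \cong \sfA_{x,\lambda}$; (2) now within $H_1$ we must split $1 \to \Jxp \to H_1 \to \sfA_{x,\lambda} \to 1$ with $\Jxp$ pro-$p$ and $\sfA_{x,\lambda}$ finite of order prime to $p$. For step (2) I would use the standard pro-$p$ Schur–Zassenhaus / cohomological vanishing: since $\Jxp = \varprojlim \Jxp/G_{x,s}$ is an inverse limit of finite $p$-groups, and $H^2(\sfA_{x,\lambda}, M) = 0 = H^1(\sfA_{x,\lambda}, M)$ for any finite $p$-group $M$ with $\sfA_{x,\lambda}$-action when $\gcd(|\sfA_{x,\lambda}|, p) = 1$, the extension splits at each finite level and the complements are unique up to conjugacy, so one can pass to the inverse limit to obtain a genuine complement $A \le H_1$. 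Its image in $\overline{H}$ is $A_1$, so $A$ maps isomorphically onto $\sfA_{x,\lambda}$ and $H_{x,\lambda} = A \ltimes \Jx$ as desired.

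Alternatively, and perhaps more cleanly, I would combine the two steps: since $\Jx$ itself is pro-$p$ (it is a Moy–Prasad group $G_{x,\fracmm}$, hence an inverse limit of the finite $p$-groups $\Jx/G_{x,s}$, $s > \fracmm$) and $\sfA_{x,\lambda}$ has order prime to $p$, apply the inverse-limit form of Schur–Zassenhaus directly to $1 \to \Jx \to H_{x,\lambda} \to \sfA_{x,\lambda} \to 1$: at each finite quotient $H_{x,\lambda}/G_{x,s}$ the normal subgroup $\Jx/G_{x,s}$ is a $p$-group of order prime to the index, so a complement $\bar A_s$ exists and all such are conjugate; the conjugacy uniqueness makes the system $\{\bar A_s\}$ into an inverse system, and $A := \varprojlim \bar A_s$ is a closed complement in $H_{x,\lambda}$. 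Since $\sfA_{x,\lambda}$ is finite, $A \cong \sfA_{x,\lambda}$ is finite and $H_{x,\lambda} = A \ltimes \Jx$.

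**The main obstacle** I anticipate is purely a matter of being careful with the profinite bookkeeping: one must check that $\Jx$ (equivalently $H_{x,\lambda}$) is genuinely the inverse limit of its finite quotients by the deeper Moy–Prasad congruence subgroups, that these quotients $\Jx/G_{x,s}$ are finite $p$-groups, and that the $\sfA_{x,\lambda}$-action is compatible — all of which follow from standard properties of Moy–Prasad filtrations, but deserve an explicit sentence. The genuinely representation-theoretic content (that $\sfA_{x,\lambda}$ and $\sfS_{x,\lambda}$ have order prime to $p$) is already granted in the text, so beyond invoking Schur–Zassenhaus at finite level and taking the limit there is little left to do. One should also note for completeness that the resulting complement $A$ is not claimed to be canonical, only to exist; any reference downstream to "the" splitting should be understood up to $\Jx$-conjugacy.
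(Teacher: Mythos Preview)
Your proposal is correct and follows essentially the same approach as the paper: both exploit that $\Jx$ is pro-$p$ while $\sfA_{x,\lambda}$ has order prime to $p$, reduce to finite quotients along the Moy--Prasad filtration, use the vanishing of the relevant group cohomology there, and pass to the limit. The only cosmetic difference is that the paper phrases the inductive step as \emph{lifting a given splitting} $s_i\colon \sfA_{x,\lambda}\to H_{x,\lambda}/G_{x,\frac{i}{m}}$ to $s_{i+1}$ (the obstruction living in $H^2(\sfA_{x,\lambda},G_{x,\frac{i}{m}:\frac{i+1}{m}})=0$), rather than producing complements independently at each level and then invoking $H^1$-vanishing for compatibility; your ``alternative'' paragraph is in fact closest to the paper's argument.
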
 

\begin{proof} We shall show that $\sfA_{x,\lambda}$ splits in $H_{x,\lambda}/G_{x,\frac{i}{m}}$ for all $i \geq 1$. For $i=1$ there is nothing to prove. 
Assume that we have constructed a splitting $s_i \colon \sfA_{x,\lambda} \rightarrow H_{x,\lambda}/G_{x,\frac{i}{m}}$. The obstruction to lift this splitting to 
$H_{x,\lambda}/G_{x,\frac{i+1}{m}}$ lies in $H^2(\sfS_{x,\lambda},G_{x,\frac{i}{m}:\frac{i+1}{m}})$. Since the order of $\sfS_{x,\lambda}$ is prime to $p$ and 
$G_{x,\frac{i}{m}:\frac{i+1}{m}}$ is an elementary $p$-group, this cohomology vanishes. Hence $s_i$ can be lifted to $s_{i+1}$ and the proposition follows 
by passing to a limit.  
\end{proof}

We extend
$\psi_\lambda$ to a character of $H_{x,\lambda}$ by setting
$\psi_\lambda$ to be trivial on $\sfS_{x,\lambda}$.  By
$H^1(\sfS_{x,\lambda},\Jx) = 0$, we know that all splittings are
conjugate up to $\Jx$-conjugation. Hence the extension $\psi_\lambda$
is unique and therefore canonical.

Let $\chi$ be a character of $\sfA_{x,\lambda}$ and let
$\pi_x(\lambda, \chi) := \ind_{H_{x,\lambda}}^G \psi_{\lambda} \otimes
\chi$.  By \cite[Prop.~5.2]{RY}, $\pi_x(\lambda,
\chi)$ is an irreducible supercuspidal representation of $G$.
We will call $(x, \lambda, \chi)$ an {\it epipelagic data} of
order $m$ and we call $\pi_x(\lambda, \chi)$ an (irreducible) {\it
  epipelagic supercuspidal representation} attached to the data. It
contains a minimal $K$-type represented by a coset
$\lambda = [\Gamma] = \Gamma + \fgg_{x,-\fracmm^+}$ in
$\fgg_{x,-\fracmm:-\fracmm^+}$ where $K = G_{x,\fracmm}$.

\begin{prop} \label{prop:MKT}
Suppose $G$ is a group appearing in a type I reductive dual pair.
\begin{asparaenum}[(i)]
\item
All unrefined minimal $K$-types of
  $\pi_x(\lambda,\chi)$ are $G$-conjugate to
  $\Gamma + \fgg_{x,-\fracmm^+}$. 
\item
 If $\pi_x(\lambda, \chi)$ and $\pi_x(\lambda',\chi')$ are isomorphic
  $G$-modules, then $\lambda$ and $\lambda'$ are in
  the same $G_x$-orbit. In addition if $\lambda = \lambda'$ then
  $\chi = \chi'$.
\end{asparaenum}
\end{prop}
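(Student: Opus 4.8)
The plan is to deduce both parts from the general theory of unrefined minimal $K$-types together with the structure of the epipelagic construction. For part (i), recall that $\pi_x(\lambda,\chi) = \ind_{H_{x,\lambda}}^G \psi_\lambda\otimes\chi$ contains the $K$-type $(G_{x,\fracmm}, \psi_\lambda)$, which corresponds to the coset $\Gamma + \fgg_{x,-\fracmm^+}$ in $\fgg_{x,-\fracmm:-\fracmm^+}$. By the Moy--Prasad theory of minimal $K$-types (and the uniqueness-up-to-$G$-conjugacy results of Moy--Prasad), any two unrefined minimal $K$-types of a fixed irreducible representation are ``associate'', i.e.\ $G$-conjugate after passing through the identification of positive and negative depth filtration quotients via the chosen bilinear form. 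So the content to check is that associate classes reduce to genuine $G_x$-conjugacy classes for the specific point $x$ and depth $\fracmm$. The first step would therefore be to invoke the Moy--Prasad associativity theorem to reduce to comparing cosets attached to points in a common apartment, and then use that for classical groups all the relevant data (lattice functions $\sL$, the point $x$) is rigid enough that an element of $G$ carrying one minimal $K$-type to another must actually normalize the building point $x$ (up to the reduced building, which by \Cref{sec:SD} equals $G_x$ since $G$ has compact center). This last point is where the hypothesis ``$G$ appears in a type I dual pair'' is used: such $G$ is a unitary group of a Hermitian form, and its parahoric structure is controlled by self-dual lattice functions, so the normalizer of a minimal $K$-type is contained in $G_{[x]} = G_x$.

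For part (ii), suppose $\pi_x(\lambda,\chi)\cong\pi_x(\lambda',\chi')$ as $G$-modules. Both contain minimal $K$-types attached to $\lambda$ and to $\lambda'$ respectively, based at the \emph{same} point $x$ and depth $\fracmm$. By part (i), applied to $\pi_x(\lambda,\chi)$, every unrefined minimal $K$-type of this representation is $G$-conjugate to the one attached to $\Gamma$; in particular the minimal $K$-type attached to $\lambda'$ (coming from the isomorphic copy $\pi_x(\lambda',\chi')$) is $G$-conjugate to the one attached to $\lambda$. Since both are based at $x$, the conjugating element lies in $G_{[x]} = G_x$, and conjugation by $G_x$ on $\fgg_{x,-\fracmm:-\fracmm^+}$ is precisely the $\sfG_x$-action (after reduction mod $G_{x,0^+}$, which acts trivially on this quotient). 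Hence $\lambda$ and $\lambda'$ lie in the same $G_x$-orbit. When $\lambda=\lambda'$, the two representations are $\ind_{H_{x,\lambda}}^G \psi_\lambda\otimes\chi$ and $\ind_{H_{x,\lambda}}^G \psi_\lambda\otimes\chi'$ with the \emph{same} compact-mod-center inducing subgroup $H_{x,\lambda}$; by Mackey theory (or the intertwining criterion of \cite[Prop.~5.2]{RY}), these compactly induced representations are isomorphic if and only if $\psi_\lambda\otimes\chi$ and $\psi_\lambda\otimes\chi'$ are conjugate by an element normalizing $H_{x,\lambda}$ and fixing $\psi_\lambda$, hence by an element of $H_{x,\lambda}$ itself; but $\chi,\chi'$ factor through the abelian group $\sfA_{x,\lambda}$, so such conjugation is trivial and $\chi=\chi'$.

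The main obstacle I expect is step one of part (i): proving that an element of $G$ carrying the minimal $K$-type at $x$ to a minimal $K$-type must fix $x$ (equivalently $[x]$), rather than merely carrying $x$ to another point realizing the same stratum. In general the set of optimal points realizing a given semisimple stratum can be a face of positive dimension, and one must use that $\lambda$ is \emph{stable} (so its $\sfG_x$-orbit is closed and its stabilizer is finite modulo the center) to pin down $x$ uniquely; concretely, stability of $\lambda$ forces the associated one-parameter degeneration to be non-degenerate, which forces $x$ to be the barycenter of the unique minimal facet on which the stratum is non-degenerate. I would handle this by citing the genericity/stability analysis of Reeder--Yu \cite{RY} (their §3--4), where exactly this rigidity of the optimal point for a stable functional is established, so that the normalizer of the $K$-type at $x$ is contained in $G_{[x]}$. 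Everything else — the reduction via Moy--Prasad associativity, the identification of $G_x$-conjugacy with the $\sfG_x$-action, and the Mackey-theoretic determination of $\chi$ — is routine once this rigidity is in hand.
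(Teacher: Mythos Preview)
Your outline is correct in structure --- associate classes via Moy--Prasad, then rigidity of $x$, then part (ii) follows --- but the mechanism you propose for the rigidity step is not the one the paper uses, and your proposed citation does not obviously supply what is needed. You suggest appealing to Reeder--Yu's stability analysis to pin down $x$ as ``the barycenter of the unique minimal facet''; however, \cite{RY} \S3--4 concerns the existence of stable functionals at specific points, not a uniqueness statement of the form ``any point $y$ carrying an associate of this stratum is $G$-conjugate to $x$''. That uniqueness is exactly the content of the obstacle you flagged, and it is not dispatched by citing stability alone.

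The paper's argument for part (i) goes through a different route. By \Cref{lem:reg}, any lift $\Gamma$ of $\lambda$ is a \emph{good} regular semisimple element of depth $-\fracmm$, so its centralizer $\bfH = \Cent_\bfG(\Gamma)$ is a torus. Because $G$ belongs to a type I dual pair, $\Cent(G)$ is anisotropic; combined with the fact that $H/\Cent(G)$ is $k$-anisotropic for tamely ramified supercuspidal data (see e.g.\ \cite[Prop.~14.5]{Kim}), one gets that $H$ itself is $k$-anisotropic, hence $\cB(\bfH,k) = \{x\}$ is a single point. Given another minimal $K$-type at some $y$, associativity gives an element $g$ and an intersection condition as in your sketch; the paper then invokes Kim--Murnaghan \cite[Cor.~2.3.5 and Lem.~2.4.7]{KM} to produce $h \in G_{x,0^+}$ with ${}^h(\Gamma + X'') \in \fhh$ and $hgy \in \cB(\bfH,k) = \{x\}$. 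This forces $hgy = x$, and the desired $G$-conjugacy drops out. The key missing idea in your proposal is this passage through the anisotropic centralizer torus and the Kim--Murnaghan machinery; once you have $\cB(\bfH,k)=\{x\}$ the rigidity is automatic, with no separate appeal to stability needed at that stage (stability enters only through \Cref{lem:reg} to make $\Gamma$ good).

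For part (ii) your argument is essentially the paper's: the first assertion is the special case $y=x$ of the argument for (i), and the assertion $\chi=\chi'$ is \cite[Lemma~2.2]{RY}, which is your Mackey computation packaged.
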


\begin{proof}
  (i) Let $\Gamma + \fgg_{x,-\frac{1}{m}^+}$ represent the unrefined
  minimal $K$-type of $\pi$ as above. We will see in \Cref{lem:reg}
  later that $\Gamma \in \fgg_{x,-\fracmm}$ is a good element and
  $\bfH =\Cent_\bfG(\Gamma)$ is a torus. Let $H = \bfH(k)$ and
  let $\fhh = \Cent_{\fgg}(\Gamma)$ be its Lie algebra. Since
  $(x,\Gamma,\chi)$ is a tamely ramified supercuspidal data,
  $H/\Cent(G)$ is $k$-anisotropic (or see for example,
  \cite[Proposition~14.5]{Kim}). We are considering type I classical
  dual pairs so $\Cent(G)$ is anisotropic. Therefore $H$ is
  $k$-anisotropic. Hence $\cB(\bfH,k) = \set{x}$.

  Suppose $\Gamma_y + \fgg_{y,-\fracmm^+}$ is another unrefined
  minimal $K$-type of $\pi$ for some $y \in \cB(\bfG,k)$. 
Now we show that we can move $y$ to $x$. Since $\Gamma
  + \fgg_{x,-\frac{1}{m}^+}$ and $\Gamma_y + \fgg_{y,-\frac{1}{m}^+}$
  are minimal $K$-types of $\pi$, they are associates \cite{MP}, i.e. there
  exists a $g \in G$ such that
\begin{equation} \label{eq3}
{}^g\left( \Gamma_y + \fgg_{y,-\fracmm^+} \right) \cap \left( \Gamma
  +\fgg_{x,-\fracmm^+} \right) = \left({}^g\Gamma_{y}
  +\fgg_{gy,-\fracmm^+} \right) \cap \left( \Gamma
  +\fgg_{x,-\fracmm^+} \right)
\end{equation}
is nonempty. 

Now we apply the argument in the proof of \cite[Corollary~2.4.8]{KM}. By
\eqref{eq3}, there are $X''\in \fgg_{x,-\fracmm^+}$ and
$Y''\in \fgg_{gy,-\fracmm^+}$ such that ${}^g\Gamma_{y}+ Y'' = \Gamma + X'' $.
By \cite[Corollary 2.3.5]{KM}, there exists $h\in G_{x,0^+}$ such that
${}^h(\Gamma+X'') = \Gamma+ X'\in (\Gamma+\fgg_{x,-\fracmm^+})\cap \fhh \cap
\fgg_{hgy,-\fracmm}$
where $X' \in \fhh_{x,-\frac{1}{m}^+}$. By \cite[Lemma~2.4.7]{KM}
$hgy \in \cB(\bfH,k) = \set{x}$, i.e. $hgy = x$.  We consider the isomorphism
$\Ad((hg)^{-1}) \colon \fgg_{y,-\fracmm}/\fgg_{y,-\fracmm^+} \iso
\fgg_{x,-\fracmm}/\fgg_{x,-\fracmm^+}$.
It is now clear that the coset $[{}^{hg} \Gamma_y] = [\Gamma] = \lambda$. This
proves (i).

(ii) The last assertion of (ii) is \cite[Lemma 2.2]{RY}. Now we 
prove the first assertion. Note that
$\lambda = \Gamma + \fgg_{x,-\frac{1}{m}^+}$ and
$\lambda' = \Gamma' + \fgg_{x,-\frac{1}{m}^+}$ represent unrefined
minimal $K$-types of $\pi_x(\lambda, \chi)$ and
$\pi_x(\lambda', \chi')$ respectively. Since
$\pi_x(\lambda, \chi) \cong \pi_x(\lambda', \chi')$, the two minimal
$K$-types are associates. By the proof in (i) where $y = x$ and
$\Gamma_y = \Gamma'$, we conclude that there exists $g \in G$ such
that $gx = x$ and $g \lambda' = \lambda$. In particular $g \in G_x$. 
Hence $\lambda$ and $\lambda'$ are in the same $G_x$-orbit in
$\sfg_{x,-\fracmm}(k)$.
 \end{proof}

\section{Classical reductive dual pairs and local theta
  correspondence}

\subsection{Classical groups} \label{SS21} In this section, we will
define the classical groups which appear in the irreducible dual
pairs.

Let $D$ be a division algebra over $k$ with an involution $\tau$ in
one of the following cases:
\begin{enumerate}[(i)]
\item \label{D.k}$D = k$, $\tau$ is the identity map on $k$ and $\varpi_D = \varpi$.

\item \label{D.qd}$D$ is a quadratic extension of $k$, $\tau$ is the nontrivial
  Galois element in $\Gal(D/k)$, $\varpi_D$ is a uniformizer of $D$ so
  that $\varpi_D = \varpi$ if $D/k$ is unramified or
  $\tau(\varpi_D) = -\varpi_D$ if it is ramified.
	
\item \label{D.qt}$D$ is the quaternion algebra over $k$, $\tau$ is the
  usual involution on $D$, $\varpi_D$ is a uniformizer of $D$ such
  that $\varpi_D^2 = \varpi$.
 
\end{enumerate}
Let $\foo_D$ denote the ring of integers of $D$,
$\fpp_D = \varpi_D\foo_D$ denote its maximal prime ideal and
$\fff_D = \foo_D/\fpp_D$ denote its residue field. We set
$\val_D := \val(\varpi_D)$.

\def\rhalf{{\half \val_D}}
\def\iotag{{\iota_{\sfggE}}}

Let $V$ be a right $D$-vector space. Let $\End_D(V)$ denote the space
of $D$-linear endomorphisms of $V$ which acts on the left.  For
$\epsilon = \pm 1$, let $\inn{}{}_V \colon V\times V\rightarrow D$ be
an $\epsilon$-Hermitian sesquilinear form, i.e.
\[
\inn{v_1}{v_2}_V = \epsilon\inn{v_2}{v_1}_V^\tau \quad\text{and}\quad
\inn{v_1a_1}{v_2a_2}_V = a_1^\tau\inn{v_1}{v_2}_V a_2
\] 
for all $v_1,v_2 \in V$ and $a_1,a_2 \in D$. The $\epsilon$-Hermitian
form induces a conjugation $*\colon \End_D(V)\rightarrow \End_D(V)$
such that $\inn{gv_1}{v_2}_V = \inn{v_1}{g^* v_2}_V$ for all
$v_1, v_2 \in V$ and $g \in \End_D(V)$. Then
\begin{align*}
  G &  = \rU(V) = \rU(V, \inn{}{}_V) := \set{ g \in \End_D(V) | gg^* = \Id }
      \quad \text{and} \\ 
  \fgg & = \fuu(V) = \fuu(V, \inn{}{}_V) 
            := \set{ X \in \End_D(V) | X + X^* = 0}
\end{align*}
are a classical group and its Lie algebra.

\subsection{Irreducible reductive dual pairs of
  type~I} \label{sec:dualpair} Let $V$ be a right $D$-vector space
equipped with an $\epsilon$-Hermitian sesquilinear form $\inn{}{}_V$
and let $V'$ be a right $D$-vector space equipped with an
$\epsilon'$-Hermitian sesquilinear form $\inn{}{}_{V'}$ where $\epsilon' = - \epsilon$. Let $G$ and
$G'$ be the classical groups defined by $(V,\inn{}{}_V)$ and
$(V',\inn{}{}_{V'})$ respectively.

We view $V'$ as a left $D$-module by $a v = v a^\tau$ for all
$a \in D$ and $v \in V'$.  Let $W = V \otimes_D V'$. It is a
symplectic $k$-vector space with symplectic form $\inn{}{}$ given by
\begin{equation} \label{equ4}
\inn{v_1 \otimes v_1'}{v_2 \otimes v_2'} =
\tr_{D/k}(\inn{v_1}{v_2}_V \inn{v_1'}{v_2'}_{V'}^{\tau}).
\end{equation}
Then $G$ and $G'$ commute with each other in the
symplectic group $\Sp(W)$. We call $(G,G')$ an \emph{irreducible reductive
dual pair of type~I}.

\subsection{Lattice model} \label{sec:LM} We recall that
$\psi \colon k \rightarrow \bC^\times$ is a non-trivial additive character
with conductor $\fpp_k$.  Let $A$ be a self dual lattice in $W$, i.e. $A = \set{w \in W | \inn{w}{w'} \in \fpp_k,
  \forall w' \in A }$. The lattice model with respect to~$A$ of the
oscillator representation $\omega$ with respect to the character $\psi$ is
defined by
\[
\sS(A) = \Set{f\colon W \to \bC|\begin{array}{c} f(a+w) =
  \psi(\frac{1}{2}\inn{w}{a})f(w)\ \forall a\in A\\
  \text{$f$ locally constant, compactly supported}
\end{array}}.
\]
Let $\Mp(W)$ be
the metaplectic $\bC^\times$-covering of $\Sp(W)$ which acts on the
oscillator representation naturally by its definition.  The lattice
model with respect to $A$ gives a section
$\omegaA \colon \Sp(W) \hookrightarrow \Mp(W)$ of the natural projection
$\Mp(W) \twoheadrightarrow \Sp(W)$ (c.f. \cite{MVW,Wa}).  Let
$\SpA := \Stab_{\Sp(W)}(A) = \Set{g\in\Sp(W)|gA \subseteq A}$.  We
only describe $\omegaA(g)$ for $g \in \SpA$:
\begin{equation} \label{eq:LA} 
(\omegaA(g)f)(w) = f(g^{-1}w) \quad
  \forall g \in \SpA, f \in \sS(A) \text{ and } w \in W.
\end{equation}
The splitting $\omegaA$ does not depend on the choice of the self-dual
lattice $A$. More precisely, we have the following proposition which follows immediately from \Cref{lem:SS}.

\begin{prop} \label{prop:liftSpA}
There is a section
\[
\xymatrix{
\omega_0 \colon {\displaystyle \bigcup_{A \text{ is self-dual}}} \Sp_A
\ar@{^(->}[r]& \Mp(W)
}
\]
such that $\omega_0|_{\SpA} = \omega_A$ for every self dual lattice
$A$.
\end{prop}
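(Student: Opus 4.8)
The plan is to deduce \Cref{prop:liftSpA} from the cited \Cref{lem:SS}, so the real content is to isolate what that lemma should say and how to patch the local sections $\omega_A$ together into a single global section on $\bigcup_A \Sp_A$. First I would record the compatibility statement we need: for any two self-dual lattices $A$ and $B$ in $W$, and any $g$ lying in $\Sp_A \cap \Sp_B$, one has $\omega_A(g) = \omega_B(g)$ in $\Mp(W)$. Granting this, the definition of $\omega_0$ is forced: for $g \in \Sp_A$ set $\omega_0(g) := \omega_A(g)$, and the compatibility statement guarantees this is independent of the choice of $A$ with $g \in \Sp_A$, hence well-defined on the union. That it is a set-theoretic section of $\Mp(W) \twoheadrightarrow \Sp(W)$ is immediate since each $\omega_A$ is, and $\omega_0|_{\Sp_A} = \omega_A$ holds by construction; so the proposition is exactly a reformulation of the compatibility statement, which is what I am taking \Cref{lem:SS} to provide.

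To see why such a compatibility holds (this is the substance of \Cref{lem:SS}), I would argue as follows. The two lattice models $\sS(A)$ and $\sS(B)$ are two concrete realizations of the same oscillator representation of $\Mp(W)$, hence there is an intertwining isomorphism $\iota_{A,B}\colon \sS(A) \iso \sS(B)$, unique up to scalar, which conjugates the action of $\Mp(W)$ on $\sS(A)$ into that on $\sS(B)$. Now for $g \in \Sp_A \cap \Sp_B$ the element $\omega_A(g)$ (resp. $\omega_B(g)$) acts on $\sS(A)$ (resp. $\sS(B)$) by the explicit formula \eqref{eq:LA}, namely $(f \mapsto f(g^{-1}\cdot))$ — an unnormalized translation with no metaplectic cocycle correction, because $g$ stabilizes the relevant lattice. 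One then checks that $\iota_{A,B}$ intertwines these two explicit operators on the nose (not merely up to scalar): since both operators are literally "precompose with $g^{-1}$" on functions on $W$, and $\iota_{A,B}$ is, up to normalization, itself built from averaging/Fourier-type transforms that commute with precomposition by a lattice-preserving symplectic map, the projective ambiguity in $\iota_{A,B}$ cancels. Comparing with the uniqueness of the metaplectic lift of a fixed $g \in \Sp(W)$ up to the kernel $\bC^\times$, and using that $\omega_A(g)$ and $\omega_B(g)$ act by the same scalar (namely $1$) relative to $\iota_{A,B}$, forces $\omega_A(g) = \omega_B(g)$ in $\Mp(W)$.

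The main obstacle I anticipate is precisely the normalization bookkeeping in the previous paragraph: ensuring that the intertwiner $\iota_{A,B}$ can be chosen so that it exactly matches the unnormalized formula \eqref{eq:LA} on both sides, rather than introducing a spurious constant that would make $\omega_A(g)$ and $\omega_B(g)$ differ by an element of $\bC^\times$. Concretely one must verify that the Weil index / Gauss-sum factors that normally appear when changing between lattice models all evaluate trivially on the overlap $\Sp_A \cap \Sp_B$; this is where the hypothesis that $A$ and $B$ are both \emph{self-dual} (conductor exactly $\fpp_k$, matching the conductor of $\psi$) does the work, as it kills the discrepancy between the two self-dual polarizations. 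Once that constant is pinned to $1$, everything else — well-definedness of $\omega_0$, the restriction property, and the fact that it is a section — is formal. I would therefore structure the write-up as: (1) state and prove the compatibility $\omega_A|_{\Sp_A \cap \Sp_B} = \omega_B|_{\Sp_A \cap \Sp_B}$ as \Cref{lem:SS}; (2) observe the proposition is the immediate globalization.
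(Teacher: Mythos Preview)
Your proposal is correct and follows essentially the same approach as the paper: the proposition is immediately reduced to the compatibility statement (\Cref{lem:SS}), which is then proved by writing down an explicit intertwiner between the two lattice models and checking it commutes with the operators $f \mapsto f(g^{-1}\cdot)$. Your anticipated normalization obstacle turns out to be a non-issue in the paper's treatment: with the explicit intertwiner $(\Xi f)(w) = \int_{A_2} \psi(\tfrac{1}{2}\inn{a}{w}) f(w+a)\,da$, the verification that $\omega_{A_2}(g)\circ \Xi = \Xi \circ \omega_{A_1}(g)$ is a one-line change of variables using only that $g$ preserves Haar measure on $A_2$, so no Weil-index or Gauss-sum bookkeeping ever enters.
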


\subsection{Epipelagic supercuspidal representations of covering
  groups} \label{sec:CEpi} Let $\Sigma = (x,\lambda,\chi)$ be an
epipelagic datum of order $m$. We retain the notation for the subgroup
$H_{x,\lambda} = \sfA_{x,\lambda} \ltimes \Jx$ and its character
$\psi_\lambda \otimes \chi$ in \Cref{sec:Epirep}. Since $G$ is
  a member of a type I dual pair, we recall that $G_x = G_{[x]}$ in \Cref{sec:SD}
  and $H_{x,\lambda}$ is a subgroup of $G_x$. We will show in
\Cref{sec:sfX} later that $G_x$ stabilizes a self-dual lattice $A$ in
$W$.  Then \Cref{prop:liftSpA} gives a splitting
\[\xymatrix{
\omega_0|_{G_x}\colon G_x \ar@{^(->}[r]& \tG_x
}
\]
of $\tG_x \rightarrow G_x$. We will identify $H_{x,\lambda}$ and $G_x$
as subgroups of $\tG_x$ via $\omega_0$. Let
$\rid_{\bC^\times} \colon \bC^\times \rightarrow \bC^\times$ be the identity map.
Under this splitting, $\tG_x = G_x \times \bC^\times$ with $\bC^\times$ acting
on the oscillator representation $\sS$ via $\rid_{\bC^\times}$.  Now
\[
\pi_\Sigma :=\ind_{H_{x,\lambda} \times \bC^\times}^{\tG} ((\psi_{\lambda} \otimes
  \chi) \boxtimes \rid_{\bC^\times})
\]
is an irreducible supercuspidal representation of $\tG$ which is also
denoted by $\pi_\Sigma^{\tG}$ or $\pi_x^{\tG}(\lambda, \chi)$.  We
will also call $\pi_\Sigma$ an epipelagic supercuspidal representation
attached to the epipelagic data $\Sigma$.

By \Cref{sec:wplus} the splitting of $G_{x,0^+}$ is canonically
defined for any $x\in \cB(\bfG,k)$. In particular, it still makes
sense to talk about positive depth minimal $K$-types. In addition
\Cref{prop:MKT} holds if we replace $\pi_x(\lambda, \chi)$ with
$\pi_\Sigma^{\tG}$ without any modification.

\section{Bruhat-Tits Buildings and Moy-Prasad filtrations of classical
  groups}\label{sec:BT}

In this section we recall some known facts about the Bruhat-Tits
buildings of classical groups. Our references are \cite{BT3,BT4,BS,Le}.

\subsection{Lattice functions} 
A (right) $\foo_D$-lattice $L$ in a right $D$-vector space $V$ is a
right $\foo_D$-submodule such that $L\otimes_{\foo_D} D = V$.

\begin{definition} \label{def:DLattic}
\begin{asparaenum}[1.]
\item Let $\Latt_V$ be the set of $\foo_D$-lattice valued functions
  $s\mapsto \sL_s$ on $\bR$ such that
\begin{inparaenum}[(i)]
\item $\sL_s \supseteq \sL_t$ if $s<t$, 
\item $\sL_{s+\val_D} = \sL_s\varpi_D$ and 
\item $\sL_s = \bigcap_{t<s} \sL_t$.
\end{inparaenum}
\item We set $\sL_{r^+} := \bigcup_{t>r} \sL_t$ and $\Jump(\sL) = \set{ r \in \bR | \sL_{r} \supsetneq \sL_{r^+}
}$.
\item Given any lattice function $\sL$, we define
\begin{align*}
  \fgl(V)_{\sL,r} :=& \Set{X\in \fgl(V)| X \sL_s \subseteq \sL_{s+r}, \forall s
                      \in \bR }  & & \forall r \in \bR, \\
  \GL(V)_{\sL,r} := & \Set{g\in \GL(V)| (g-1) \sL_s \subseteq \sL_{s+r},
                   \forall s \in \bR} & & \forall
                                          r>0, \\
  \GL(V)_{\sL} :=&  \Set{g\in \GL(V)|g\sL_s\subseteq \sL_s}.
\end{align*}
\item For $r < s$, we denote $\sL_{r:s} = \sL_r/\sL_s$.
\end{asparaenum}
\end{definition}

\begin{definition} \label{def:DNorm}
\begin{asparaenum}[1.]
\item A $D$-norm of $V$ is a function $l \colon V \rightarrow \bR
\cup \set{ \infty }$ such that for all $x, y \in V$ and $d \in D$, (i)
$l(xd) = l(x) + \val(d)$, (ii) $l(x+y) \geq \min(l(x), l(y))$ and (iii)~$l(x) = \infty$ if and only if $x = 0$. 

\item The norm $l$ is called {\it splittable} if there is a $D$-basis
  $\set{ e_i | i \in I }$ of $V$ such that
  $l(\sum_{i \in I} e_i d_i) = \inf_{i \in I}(l(e_i) + \nu(d_i))$.  Let
  $\cS \cN(V)$ denote the splittable norms on $V$.  In this paper, all
  norms refer to splittable $D$-norms.
\end{asparaenum}
\end{definition}

There is a natural bijection between $\cS \cN(V)$ and $\Latt_V$ given by
$l \mapsto ( \sL_r = l^{-1}([r,+\infty)))$. Then $\Jump(\sL)$ is the
image of $l$.

The following theorem is well known and follows directly from the definition of 
Moy-Prasad filtration \cite{MP}.

\begin{thm} \label{thm:BT3.MP} The (extended) building $\cB(\GL(V))$
  could be identified with $\Latt_V$ as $\GL(V)$-sets. This
  identification is unique up to translation
  (c.f. \cite[Theorem~2.11]{BT3}).  Suppose $x\in \cB(\GL(V))$
  corresponds to the lattice function $\sL \in \Latt_V$. Then
\begin{asparaenum}[(a)]
\item 
$\fgl(V)_{x,r} = \fgl(V)_{\sL,r}$ for $r\in \bR$, 
\item $\GL(V)_{x,r} = \GL(V)_{\sL,r}$ for $r>0$ and 
\item $\GL(V)_\sL = \GL(V)_x$.
\end{asparaenum}
\end{thm}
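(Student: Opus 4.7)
The plan is to identify both $\cB(\bfGL(V))$ and $\Latt_V$ with the set of splittable $D$-norms $\cS\cN(V)$, and then derive the Moy-Prasad compatibility from the affine-root description of the filtration.

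First, I would identify apartments. Fix a $D$-basis $\bfe = (e_1,\ldots,e_n)$ of $V$ and let $\bfT_\bfe \subset \bfGL(V)$ be the corresponding diagonal maximal $\kur$-split torus. The apartment $\cA(\bfT_\bfe) \subseteq \cB(\bfGL(V),\kur)$ is naturally an $X_*(\bfT_\bfe)\otimes\bR \cong \bR^n$-torsor. On the other hand, the subset of norms in $\cS\cN(V)$ that split with respect to $\bfe$ is parametrized by $(l(e_1),\ldots,l(e_n)) \in \bR^n$. After matching base points this identifies $\cA(\bfT_\bfe)$ with the set of norms splittable in $\bfe$, and the map $l \mapsto (\sL_r := l^{-1}([r,\infty)))$ transports this identification to lattice functions.

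Next, I would globalize. Every splittable norm splits in some basis, and every apartment of $\cB(\bfGL(V))$ is a $\bfGL(V)(\kur)$-translate of $\cA(\bfT_\bfe)$. Since the $\GL(V)$-action on bases is compatible with both identifications, the per-apartment bijections assemble into a $\GL(V)$-equivariant bijection $\cB(\bfGL(V)) \iso \cS\cN(V) \iso \Latt_V$, well-defined up to a global translation. The non-trivial input here is that two different basis-identifications agree on the overlap of their apartments; this is the content of \cite[Theorem~2.11]{BT3} cited in the statement.

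For the filtration identifications (a)--(c), the method is to use the root space decomposition. Let $x \in \cA(\bfT_\bfe)$ correspond to the norm $l$ with $l(e_i) = \alpha_i$. Each element of $\fgl(V)$ decomposes along the root subspaces $D \cdot e_{ij}$ (together with the torus part), and a direct computation shows that the Moy-Prasad condition $d\cdot e_{ij} \in \fgl(V)_{x,r}$, namely $\val(d) \geq r + \alpha_j - \alpha_i$, is precisely the condition $(d\cdot e_{ij})\sL_s \subseteq \sL_{s+r}$ for all $s$. This yields (a); (b) then follows by exponentiation, or equivalently via the Cayley transform valid for $r > 0$; and (c) is immediate from the defining conditions. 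The main obstacle is the globalization step --- ensuring the per-apartment identifications glue into a genuinely $\GL(V)$-equivariant bijection --- but this is the classical Bruhat-Tits--Goldman-Iwahori theorem that the statement explicitly invokes, so in practice one appeals to it rather than reproves it.
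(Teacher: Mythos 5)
Your proposal is correct, and it supplies the standard argument for a statement that the paper itself does not prove: the text merely remarks that the theorem ``is well known and follows directly from the definition of Moy-Prasad filtration'' and cites \cite{MP} and \cite[Theorem~2.11]{BT3}. Your two ingredients --- (1) matching the apartment $\cA(\bfT_\bfe)$ with the norms split by the basis $\bfe$ and invoking the Goldman--Iwahori/Bruhat--Tits theorem for the gluing and for uniqueness up to translation, and (2) the root-space computation showing that $d\, e_{ij}\in\fgl(V)_{x,r}$ iff $\val(d)\ge r+\alpha_j-\alpha_i$ iff $(d\,e_{ij})\sL_s\subseteq\sL_{s+r}$ for all $s$, with (b) and (c) following since $\GL(V)_{\sL,r}=1+\fgl(V)_{\sL,r}$ for $r>0$ and both sides of (c) are the stabilizer of the norm --- are exactly what those references provide, so there is nothing to fault beyond the minor caveat that for $D$ a division algebra the torus $\bfT_\bfe$ is only $k$-split of rank $n$ (the filtration computation with $\val_D$ on each line $e_iD$ goes through unchanged).
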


For the rest of this paper, we will freely interchange the notion of
points in the building of $\GL(V)$, $D$-norms and lattice functions.

\subsection{Tensor products} \label{S42} Suppose $l$ and $l'$ are two
norms on $D$-modules $V$ and $V'$.  Then there is an induced norm on
$W :=V \otimes_D V'$ such that
$(l\otimes l') (v\otimes v') = l(v)+l'(v')$ (c.f.~\cite[\S~1.11]{BT3}).  Let $\sL$ and $\sL'$ be the corresponding lattice
functions. We denote by $\sL\otimes \sL'$ the corresponding
$\foo_k$-lattice function on $V\otimes_D V'$ where
\[
(\sL\otimes \sL')_t = \sum_{r+r'=t} \sL_r\otimes_{\foo_D} \sL'_{r'}.
\]  
It is easy to see that 
\begin{equation} \label{eqJLL}
\Jump(\sL\otimes \sL') = \Jump(\sL)+\Jump(\sL').
\end{equation}

The norms $l$ and $l'$ also induce a natural norm $\Hom(l,l')$ on
$\Hom_D(V,V')$ whose corresponding lattice function is
\begin{equation} \label{eqhomr}
(\Hom(\sL,\sL'))_r := \set{w\in \Hom_D(V,V')|w(\sL_s)\subseteq \sL_{s+r}' \,
\forall s\in \bR}.
\end{equation}
In particular, every norm $l$ on $V$ defines a dual norm
$l^*:=\Hom(l,\val)$ on $V^*:= \Hom_D(V,D)$.

Under the isomorphism $\Hom_D(V,V') \cong V' \otimes_D V^*$, the
norms $\Hom(l,l')$ and $l' \otimes l^*$ coincide.  If $V = V'$, then
the Moy-Prasad lattice function $r\mapsto \fgl(V)_r$ defined in
\Cref{def:DLattic} is the tensor product lattice function
$\sL\otimes \sL^*$ on $\fgl(V) = \End_D(V)$.

\subsection{Self-dual lattice functions} \label{S43} Let $V$ be a
space with a non-degenerate sesquilinear form $\inn{}{}_V$.  

\begin{definition} \label{D421}
\begin{asparaenum}[1.]
 \item For a lattice $L$ in V, we set
\[
L^\sharp := \set{v \in V | \inn{v}{v'}_V \in \fpp_D,
  \forall v'\in L }.
\]
A lattice $L$ is called {\it self-dual} if $L = L^\sharp$. A lattice $L$ is called {\it good} if
  $L^\sharp \fpp_D \subseteq L \subseteq L^\sharp$.

\item For a lattice function $\sL$ we define its dual lattice function
  $\sL^\sharp$ by $(\sL^{\sharp})_s = (\sL_{(-s)^+})^\sharp$. If $l$
  is the norm corresponding to $\sL$, then we denote the norm
  corresponding to $\sL^\sharp$ by~$l^\sharp$. If we identify $V$ with
  $V^*$ using the form $\inn{}{}_V$, then the norm $l^*$ on $V^*$
  translates to the norm $l^\sharp$ on $V$.

\item A lattice function $\sL$ is called {\it self-dual} if and only
  if $\sL = \sL^\sharp$. In terms of norm, it is equivalent to
  $l^\sharp = l$ (c.f. \cite[Prop.~3.3]{BS}) and we say that $l$ is
  self-dual. Let $\sLatt$ be the set of self-dual lattice
  functions. Clearly $\sLatt$ is the $\sharp$-fixed point set of
  $\Latt_V$.

\item When $\sL$ is self-dual, we define
  $\fgg_{\sL,r} := \fgg\cap \fgl(V)_{\sL,r}$,
  $G_{\sL,r} := G\cap \GL(V)_{\sL,r}$, $G_{\sL} := G \cap \GL(V)_{\sL}$,
  $\sfG_\sL := G_{\sL}/G_{\sL,0^+}$ and
  $\fgg_{\sL,r:s} := \fgg_{\sL,r}/\fgg_{\sL,s}$.
\end{asparaenum}
\end{definition}

\noindent \remark  If we identify $V^*\otimes_D V'^*$ as $(V\otimes_D V')^*$, then by a
calculation on a splitting basis, we have
$(l\otimes l')^* = l^* \otimes l'^*$
(c.f. \cite[(18),(21), Sect. 1.12]{BT3}).  In particular, suppose that $V$
and $V'$ are formed spaces, and $\sL$ and $\sL'$ are self-dual lattice
functions. It is easy to see that
$(l_\sL\otimes l_{\sL'})^\sharp = l_\sL \otimes l_{\sL'}$, i.e. it is
self-dual. Hence $\sL\otimes \sL'$ is a self-dual lattice on
$V \otimes_D V'$. 

\subsection{}
We recall that $k$ is a $p$-adic field with $p\neq 2$.  For a
classical group $G$ defined over~$k$, $\cB(\bfG,k)$ could be
identified canonically with the set of splittable self-dual norms on
$V$ (c.f. \cite{BT4,GY}).  The following theorem is the culmination of
\cite{BT4}, \cite{BS}, \cite{Le} and \cite{GY}.

\begin{thm}\label{thm:SB2}
\begin{enumerate}[(i)]
\item There is a natural $G$-equivariant bijection between $\cB(\bfG,k)$ and
$\sLatt$.

\item Suppose $x\in \cB(\bfG,k)$ corresponds to $\sL \in \sLatt$. Then 
\begin{enumerate}[(a)]
\item $\fgg_{\sL,r} = \fgg_{x,r}$ for $r\in \bR$,
\item $G_{\sL,r} = G_{x,r}$ for $r>0$ and
\item $G_{\sL} = G_x$.
\end{enumerate}
\end{enumerate}
\end{thm}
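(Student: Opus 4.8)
The plan is to deduce every assertion from the corresponding statements for $\bfGL(V)$ recorded in \Cref{thm:BT3.MP}, by realizing $\cB(\bfG,k)$ as a fixed-point set inside $\cB(\bfGL(V),k)$ and then transporting the Moy--Prasad structure along the closed embedding $\bfG \hookrightarrow \bfGL(V)$. For part~(i), observe that $\bfG = \rU(V)$ is the fixed-point subgroup of the involution $\sigma\colon g \mapsto (g^{*})^{-1}$ of $\bfGL(V)$, and $\sigma$ is tame since $p \neq 2$. Bruhat--Tits descent then identifies $\cB(\bfG,k)$ with the full $\sigma$-fixed locus of $\cB(\bfGL(V),k)$ (\cite{BT4}, in the concrete lattice-theoretic form of \cite{BS,Le,GY}), so it remains to describe $\sigma$ on $\cB(\bfGL(V),k)$. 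Under the bijection $\cB(\bfGL(V),k) \cong \Latt_V$ of \Cref{thm:BT3.MP}, the induced map is the duality $\sL \mapsto \sL^{\sharp}$: one checks from the definition of $\GL(V)_{\sL}$ and the dictionary between the form-duality $l^{\sharp}$ and the linear dual $l^{*}$ of \Cref{D421} that $g$ stabilizes $\sL$ if and only if $\sigma(g)$ stabilizes $\sL^{\sharp}$, whence $\sigma$ carries the point for $\sL$ to the point for $\sL^{\sharp}$. Therefore the fixed locus is exactly $\Latt_V^{\sharp} = \sLatt$, and we obtain a bijection $\cB(\bfG,k) \cong \sLatt$; it is $G$-equivariant because $\sharp$ is, the identity $(g\sL)^{\sharp} = g\sL^{\sharp}$ for $g \in G$ following at once from $g^{*}g = \Id$.

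For part~(ii), fix $x \in \cB(\bfG,k)$ corresponding to $\sL \in \sLatt$ and regard $x$ as a point of $\cB(\bfGL(V),k)$ via the embedding above. The essential input is that the Moy--Prasad filtrations at $x$ are compatible with $\bfG \hookrightarrow \bfGL(V)$, i.e.\ $\fgg_{x,r} = \fgg \cap \fgl(V)_{x,r}$ for all $r \in \bR$ and $G_{x,r} = G \cap \GL(V)_{x,r}$ for all $r > 0$; for the tame embeddings in play this is precisely what \cite{BS} and \cite{Le} establish (and it may also be checked directly by comparing the two concave-function filtrations on a common apartment through the relative root-space decomposition, which is legitimate for $p \neq 2$). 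Granting this, \Cref{thm:BT3.MP}(a) gives $\fgl(V)_{x,r} = \fgl(V)_{\sL,r}$, so $\fgg_{x,r} = \fgg \cap \fgl(V)_{\sL,r} = \fgg_{\sL,r}$, which is~(a); likewise \Cref{thm:BT3.MP}(b) gives $G_{x,r} = G \cap \GL(V)_{x,r} = G \cap \GL(V)_{\sL,r} = G_{\sL,r}$ for $r > 0$, which is~(b). Finally $G_{x} = \Stab_{G}(x)$ consists of the $g \in G$ fixing $x$ in $\cB(\bfGL(V),k)$, which by \Cref{thm:BT3.MP}(c) are exactly the $g \in G$ with $g\sL_{s} \subseteq \sL_{s}$ for all $s$, i.e.\ $G \cap \GL(V)_{\sL} = G_{\sL}$, proving~(c).

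I expect the only genuinely nontrivial points to be the two imported ingredients: the identification of $\cB(\bfG,k)$ with $\cB(\bfGL(V),k)^{\sigma}$, and the compatibility of the Moy--Prasad filtrations (on both $\fgg$ and $G$) with $\bfG \hookrightarrow \bfGL(V)$. Both hinge on $p \neq 2$ --- so that $\sigma$ is tame and the buildings and filtrations behave as in the tame case --- and together they constitute the content of \cite{BT4,BS,Le,GY}; once they are in hand the remaining work is bookkeeping on top of \Cref{thm:BT3.MP}. The most delicate part of a full write-up is likely to be pinning down that $\sigma$ acts as $\sL \mapsto \sL^{\sharp}$ on $\Latt_V$ and matching the various filtration and duality conventions, but this is routine.
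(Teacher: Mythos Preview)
The paper does not supply its own proof of this theorem: it is stated as ``the culmination of \cite{BT4}, \cite{BS}, \cite{Le} and \cite{GY}'' and left without argument. Your sketch is therefore not being compared against a proof in the paper but against the content of those references, and in that sense it is accurate: the identification of $\cB(\bfG,k)$ with the $\sharp$-fixed points of $\Latt_V$ via descent for the involution $g\mapsto (g^*)^{-1}$, together with the compatibility $\fgg_{x,r}=\fgg\cap\fgl(V)_{x,r}$ and $G_{x,r}=G\cap\GL(V)_{x,r}$, is precisely the strategy carried out in \cite{BT4,BS,Le}. The one point worth flagging is that the equality $\cB(\bfG,k)=\cB(\bfGL(V),k)^\sigma$ is not a formal consequence of Bruhat--Tits descent alone (which a priori gives only an embedding of the former into the latter, possibly as a strict subbuilding); that the fixed locus is exactly $\cB(\bfG,k)$ is the substantive content of \cite{BT4} and \cite{GY}, and your parenthetical citation covers this, but it would be misleading to present it as automatic.
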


\subsection{}
Let $r \in \Jump(\sL)$ so that
$\sfL_r := \sL_{r:r^+} = \sL_r/ \sL_{r^+}$ which is nonzero. The
sesquilinear form $\inn{}{}_V$ induces a nonzero pairing
$\sL_{r}\times \sL_{-r} \rightarrow \foo_D$ and a non-degenerate
pairing over~$\fff_D$:
\[
\sfL_r \times \sfL_{-r} \rightarrow \fff_D.
\]
In particular we have 
\[
\Jump(\sL) = -\Jump(\sL). 
\]

The structure of $\sfG_{\sL}$ is described in the following lemma. It
is well known so we omit its proof. Also see \Cref{app:split}

\begin{lemma} \label{lem:G0}
  Let $\nu_D = \val(\varpi_D)$.
  Then
  \begin{equation}
    \label{eq:sfG}
    \sfG_{\sL} \cong \sfG_0 \times \sfG_{\rhalf} \times \prod_{r\in
      \Jump(\sL)\cap (0,\rhalf)} \GL(\sfL_r)
  \end{equation}
  where $\sfG_0\cong \rU(\sfL_0)$ and
$\sfG_\rhalf \cong \rU(\sfL_{\rhalf})$ where $\sfL_{\rhalf}$ is
equipped with the form
$\inn{[v_1]}{[v_2]} = \inn{v_1}{v_2\varpi_D^{-1}}_V
\pmod{\fpp_D}$. \qed
\end{lemma}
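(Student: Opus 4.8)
The plan is to establish the isomorphism in \eqref{eq:sfG} by decomposing the associated graded space of the self-dual lattice function and reading off the structure of the reductive quotient $\sfG_{\sL}$.

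First I would consider the graded vector space $\bar V := \bigoplus_{r \in \Jump(\sL)} \sfL_r$ over $\fff_D$, on which $\sfG_{\sL}$ acts: indeed, $G_{\sL}$ preserves each $\sL_r$ hence acts on each $\sfL_r$, and $G_{\sL,0^+}$ acts trivially on all $\sfL_r$ (this is essentially the definition of the filtration, using $\GL(V)_{\sL,0^+} \sL_s \subseteq \sL_{s^+}$ from \Cref{def:DLattic}). The induced pairing $\sfL_r \times \sfL_{-r} \to \fff_D$ from the previous subsection is $\sfG_{\sL}$-invariant and non-degenerate, and on $\sfL_0$ and $\sfL_{\rhalf}$ it restricts to a non-degenerate $\bar\epsilon$-Hermitian form (respectively, with the twist by $\varpi_D^{-1}$ on $\sfL_{\rhalf}$ as stated, which is needed to make the form land in $\fff_D$ with the correct Hermitian symmetry when $D/k$ is ramified or $D$ is quaternionic). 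Since $\Jump(\sL) = -\Jump(\sL)$, the jumps other than $0$ and $\rhalf$ pair up into dual pairs $\{r,-r\}$ with $r \in (0,\rhalf)$, and on such a pair the form gives a perfect duality $\sfL_{-r} \cong \sfL_r^*$ with no further constraint.

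Next I would show that the map $\sfG_{\sL} \to \rU(\sfL_0) \times \rU(\sfL_{\rhalf}) \times \prod_{r \in \Jump(\sL) \cap (0,\rhalf)} \GL(\sfL_r)$ — sending $g$ to the tuple of its induced actions, using the action on $\sfL_r$ for $r \in (0,\rhalf)$ to parametrize the $\GL$ factor — is an isomorphism. Injectivity: if $g \in G_{\sL}$ acts trivially on every $\sfL_r$, then $(g-1)\sL_s \subseteq \sL_{s^+}$ for all $s$, so $g \in G_{\sL,0^+}$, giving the identity in $\sfG_{\sL}$. For surjectivity, one picks a splitting of the lattice function adapted to a splittable basis (available by \Cref{def:DNorm} and the self-duality, choosing the basis compatible with the form so that dual basis vectors sit in dual jumps), which realizes $\bar V$ as a graded model; then any tuple in the target lifts to an automorphism of $V$ preserving the grading and the form, hence to an element of $G_{\sL}$ with the prescribed image. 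That the $\GL(\sfL_r)$ factors for $r \in (0,\rhalf)$ are unconstrained follows because the component on $\sfL_{-r}$ is forced to be the adjoint-inverse via the pairing, so choosing the $\sfL_r$-component freely determines the $\sfL_{-r}$-component; and the unitary conditions on $\sfL_0,\sfL_{\rhalf}$ are exactly the condition $gg^* = \Id$ restricted to those self-paired pieces.

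The main obstacle I expect is the bookkeeping at the special jump $r = \rhalf = \half \val_D$: this jump only occurs in the ramified quadratic and quaternionic cases, where $\val_D = \half$ or the analogous half-integer, and one must check carefully that the twisted form $\inn{[v_1]}{[v_2]} = \inn{v_1}{v_2 \varpi_D^{-1}}_V \bmod \fpp_D$ is well-defined on $\sfL_{\rhalf}$, non-degenerate, and has the right type (Hermitian versus skew-Hermitian, controlled by the sign $\tau(\varpi_D) = \pm \varpi_D$) so that $\sfG_{\rhalf} \cong \rU(\sfL_{\rhalf})$ genuinely. Since the lemma is stated as well known and its proof is omitted in the paper (with a pointer to \Cref{app:split}), I would either cite \cite{BT4,BS,Le} for the detailed verification of this twisted-form computation on a splitting basis, or defer it to the appendix; the conceptual content — graded decomposition plus self-duality pairing up the jumps — is exactly as above.
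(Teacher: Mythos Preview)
Your proposal is correct and follows the same approach that the paper indicates: the lemma is stated without proof as well known, with a pointer to \Cref{app:split}, where precisely the splitting basis $\{e_i\}$, the graded pieces $\sV^r$, and the $\foo_k$-group scheme $\sQ = \rU(\sV^0)\times \rU(\sV^{\half\vD})\times \prod_r \GL_{\foo_K}(\sV^r)$ are set up so that $\sfQ \iso \sfG_\sL$, exactly the injectivity/surjectivity-via-splitting argument you outline. One small correction to your side remark: the jump at $r = \half\nu_D$ can occur in every case, not only the ramified quadratic and quaternionic ones; what is special in those two cases is that $\tau(\varpi_D) = -\varpi_D$, so the twist by $\varpi_D^{-1}$ flips the Hermitian sign on $\sfL_{\half\nu_D}$ (from $\epsilon$ to $-\epsilon$), which is indeed the bookkeeping you flag and which the appendix records in Case~2~(c).
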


\section{Tame base changes and epipelagic points} 

\subsection{} \label{S51} Let $D$ be the division algebra over $k$ as in \Cref{SS21}.  
Let $V$ be a $D$-module with an $\epsilon$-Hermitian sesquilinear form $\inn{}{}_V$ and let $\bfG$ be the classical group defined over $k$ such that $G := \bfG(k) = \rU(V,\inn{}{}_V)$. Suppose $E$ is a tamely ramified
finite extension of $k$ or $\kur$ such that $\bfG$ splits.  In this
section we study the relations between buildings under tamely ramified
field extensions.

In all cases, it is standard to construct an $E$-vector space $\pV$
obtained by certain base change of $V$ so that
$\bfG(k) \subseteq \bfG(E)$ are subgroups of $\GL_E(\pV)$. By
\cite{GY} there is a canonical bijection
$\cB(\bfG,k) \iso
\cB(\bfG,E)^{\Gal(E/k)}$.
The next proposition describes this bijection in terms of splittable
norms on $V$ and $\pV$. 

\begin{prop} \label{lem:TBC} 
We identify buildings of classical groups with the corresponding set of splittable norms.
\begin{asparaenum}[(i)]
\item Suppose $D = k$. Let $\pV = V\otimes_k E$ and let
  $i_V \colon V \rightarrow \pV$ be given by $v\mapsto v\otimes 1$. Let
  $\inn{}{}_\pV$ be the $E$-linear extension of $\inn{}{}_V$. Then 
  $\bfG(E) = \rU(\pV,\inn{}{}_\pV)$. The bijection
  $\cB(\bfG,k) \iso \cB(\bfG,E)^{\Gal(E/k)}$
  is given by $l_V \mapsto l_V \otimes_k (\val|_E)$ and its inverse map is
  $l_\pV \mapsto l_\pV \circ i_V$.

\item Suppose $D$ is a quadratic extension of $k$. We fix a field
  embedding $\iota \in \Hom_k(D,E)$ and view $D$ as a subfield of
  $E$.  Let $\pV = V\otimes_D E$ and let $i_V : V \rightarrow \pV$ be
  given by $v\mapsto v\otimes 1$. Then $\bfG(E) \cong  \GL_E(\pV)$.  The bijection
  $\cB(\bfG,k) \iso \cB(\bfG,E)^{\Gal(E/k)}$
  is given by $l_V \mapsto l_V\otimes_D (\val|_E)$ and its inverse map is
  $l_\pV \mapsto l_\pV \circ i_V$.

\item Suppose $D$ is the quaternion  algebra over $k$.  We fix a
  subfield $L$ of $E$ which is a quadratic extension of $k$. We identify $L$ with a subfield of $D$
  and fix a $d \in D$ such that $d^2 \in k^\times$, $d^\tau = - d$ and
  $\Ad(d)$ acts on $L$ by the non-trivial Galois action. Let
  $\pr \colon D \rightarrow L$ be the projection of $D = L\oplus Ld$. Then
  $Q(v_1,v_2):=\pr(\inn{v_1 d}{v_2}_V)$ defines an $L$-bilinear from
  on $V$.  Let $\pV = V\otimes_L E$ and let $\inn{}{}_{\pV}$ be the
  $(-\epsilon)$-symmetric $E$-linear extension of $Q$. Then {$\bfG(E) \cong  \rU(\pV,
  \inn{}{}_{\pV})$.} The bijection
  $\cB(\bfG,k) \iso \cB(\bfG,E)^{\Gal(E/k)}$
  is given by $l_V \mapsto l_V\otimes_L (\val|_E)$ and its inverse map is
  $l_\pV \mapsto l_\pV \circ i_V$.
\end{asparaenum}
\end{prop}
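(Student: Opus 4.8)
The three cases of \Cref{lem:TBC} all have the same structure: in each we produce an explicit $E$-vector space $\pV$ with a form, identify $\bfG(E)$ with a classical group over $E$, and check that the prescribed map on norms is the canonical bijection $\cB(\bfG,k) \iso \cB(\bfG,E)^{\Gal(E/k)}$ of \cite{GY}. The plan is to verify each case in three short steps: (1) check $\bfG(E)$ is the asserted group; (2) check the map $l_V \mapsto l_V \otimes_? (\val|_E)$ lands in splittable self-dual $E$-norms and is $\Gal(E/k)$-fixed; (3) identify this map with the canonical bijection by showing it is $G$-equivariant and compatible with the apartment of a fixed maximal split (over $k$) torus, invoking \cite{GY}.

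For step (1): in case (i), $\pV = V \otimes_k E$ carries the $E$-bilinear extension of $\inn{}{}_V$ and the identity $\bfG(E) = \rU(\pV, \inn{}{}_\pV)$ is immediate from the functor-of-points description of the unitary group scheme. In case (ii), $D \otimes_k E \cong E \times E$ (two embeddings, one via $\iota$, one via $\iota \circ \tau$), so the Hermitian form on $V \otimes_D E$ over $E \times E$ becomes a pair $(h, \bar h)$, and $\rU(V)$ base-changed to $E$ becomes $\GL_E(\pV)$ — this is the standard fact that a unitary group over $k$ for a quadratic extension $D/k$ splits into a general linear group over any field containing $D$. In case (iii), $D \otimes_k E \cong \Mat_2(E)$ once $E$ contains a splitting field $L$ of the quaternion algebra; the involution $\tau$ becomes (a twist of) the transpose, and the Morita equivalence between $\Mat_2(E)$-modules and $E$-modules carries $\rU(V)$ to $\rU(\pV, \inn{}{}_\pV)$ with $\inn{}{}_\pV$ the $(-\epsilon)$-symmetric $E$-bilinear extension of the form $Q$ constructed from $\pr$. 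Here one must check that $Q$ really is $(-\epsilon)$-symmetric over $L$: using $d^\tau = -d$ and $\Ad(d)|_L$ being the Galois conjugation, a direct computation gives $Q(v_2, v_1) = \pr(\inn{v_2 d}{v_1}_V) = \pr(\epsilon \inn{v_1}{v_2 d}_V^\tau) = \pr(\epsilon \inn{v_1 d}{v_2 d d^{-\tau}}_V^{\tau}) $, and after moving scalars one recovers $-\epsilon \, Q(v_1,v_2)$.

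For steps (2) and (3): the tensor-product construction of norms was recalled in \Cref{S42}, where it was shown that $(l \otimes l')^* = l^* \otimes l'^*$ on the dual spaces; applying this with $l' = \val|_E$ (which is self-dual on the one-dimensional space $E$ over itself) and using the identification of $\bfG(E)$ with a classical group from step (1) shows that $l_V \otimes_? (\val|_E)$ is self-dual whenever $l_V$ is, hence lies in $\cB(\bfG, E)$. Splittability is inherited from a splitting $D$-basis of $V$: tensoring with $1 \in E$ gives a splitting $E$-basis of $\pV$. Galois-invariance is clear since $\Gal(E/k)$ (or $\Gal(E/L)$ in case (iii)) acts only on the $E$-factor of the tensor product and $\val|_E$ is Galois-stable. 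Finally, that this map \emph{is} the canonical bijection: by \cite{GY} there is a unique $G$-equivariant bijection $\cB(\bfG,k) \iso \cB(\bfG,E)^{\Gal(E/k)}$, so it suffices to check $G$-equivariance of $l_V \mapsto l_V \otimes (\val|_E)$ — which is immediate from $({}^g l_V) \otimes (\val|_E) = {}^g(l_V \otimes (\val|_E))$ since $g \in G \subseteq \GL_E(\pV)$ acts through the first tensor factor — together with the normalization on the apartment of $\bfS$, where both maps agree by construction because a splittable norm is determined by its values on a splitting basis and the base change does not change these values. The formula $l_\pV \mapsto l_\pV \circ i_V$ for the inverse then follows because $(l_V \otimes \val|_E) \circ i_V = l_V$ by definition of the tensor-product norm evaluated on $v \otimes 1$.

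\textbf{Main obstacle.} The routine cases are (i) and (ii); the genuine work is case (iii), the quaternionic case. The difficulty is twofold: first, one must set up the Morita equivalence $D \otimes_k E \cong \Mat_2(E)$ compatibly with the involution $\tau$ and with the chosen intermediate field $L$, so that the $\epsilon$-Hermitian form over $D$ transforms into the $(-\epsilon)$-symmetric $E$-form $\inn{}{}_\pV$ extending $Q$ — the sign flip $\epsilon \rightsquigarrow -\epsilon$ is exactly the manifestation of the involution changing type under Morita equivalence, and keeping track of it requires care. Second, one must check that the norm $l_V \mapsto l_V \otimes_L (\val|_E)$ is compatible with the identification of $V$ as an $L$-space (rather than $D$-space) — i.e. that the $\foo_D$-lattice function $\sL$ and the $\foo_L$-lattice function obtained by viewing $\sL_s$ as an $\foo_L$-module have compatible duals with respect to $\inn{}{}_V$ versus $Q$. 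This uses the fact, recalled in \Cref{S43}, that identifying $V$ with $V^*$ via a form sends $l^*$ to $l^\sharp$; here one applies it twice, once for $\inn{}{}_V$ over $D$ and once for $Q$ over $L$, and matches them via $\pr$ and the element $d$. Once these bookkeeping points are settled, all three inverse formulas $l_\pV \mapsto l_\pV \circ i_V$ drop out uniformly.
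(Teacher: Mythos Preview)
Your approach is correct and shares the paper's core idea: invoke the uniqueness result from \cite{GY} (stated in the paper as \Cref{lem:BBC}) to identify the explicit tensor-product map with the canonical building bijection. The paper, however, organizes the verification differently, and in a way that sidesteps precisely the bookkeeping you flag as the main obstacle in case~(iii).

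Rather than working directly with the classical group, the paper reduces each case to an ambient group via a commutative square whose bottom row is a building map for a general linear (or unitary) group and whose top row is obtained by passing to $\sharp$-fixed points. Concretely: in~(i) the bottom row is $\cB(\GL_k(V)) \to \cB(\GL_E(\pV))^{\Gal(E/k)}$; in~(ii) it is $\cB(\GL_D(V)) \to \cB(\GL_E(\pV^{\iota_1}) \times \GL_E(\pV^{\iota_2}))^{\Gal(E/k)}$; and in~(iii) the paper introduces an auxiliary $L$-Hermitian form $H(v_1,v_2) := \pr(\inn{v_1}{v_2}_V)$, observes that $G = \rU(V,Q) \cap \rU(V,H)$, and uses the bottom row $\cB(\rU(V,H)) \to \cB(\GL_L(\pV))^{\Gal(E/k)}$. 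In each case bijectivity of the bottom row is checked on a single apartment (where it is obvious), self-duality is preserved, and then \Cref{lem:BBC} pins down the top row. The payoff is that the Morita equivalence and the matching of $\foo_D$- versus $\foo_L$-lattice duals in case~(iii) --- your main obstacle --- never need to be unwound explicitly: the auxiliary form $H$ absorbs that work. Your direct route is valid but requires the careful dual-matching you describe; the paper's diagram trick is shorter. One small point: the uniqueness you cite from \cite{GY} needs both $G$-equivariance \emph{and} affineness on an apartment, and since $\Cent(G)$ is anisotropic here there is no translation ambiguity --- you gesture at this with ``normalization on the apartment'' but should state it explicitly.
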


Before we give the proof of \Cref{lem:TBC}, we first recall the
uniqueness result stated in \cite[\SSS{1.2}]{GY}.

\begin{lemma} \label{lem:BBC}
  Let $\cB$ and $\cB'$ be two $G$-sets satisfying the axioms of building of
  $\bfG$ over $k$ (See \cite[\SSS{2.1}]{Tits} and
\cite[\SSS{1.9.1}]{GY}.).  Let $j\colon \cB \to \cB'$ be a bijection such
  that
\begin{enumerate}[(i)]
\item $j$ is $G$-equivariant, i.e. $j(g\cdot x) = g \cdot j(x)$ for all
  $g \in G$, and
\item its restriction to an apartment $\cA$ is affine.
\end{enumerate}
Then $j$ is unique up to the translation by an element in 
$X_*(\Cent(G)^\circ) \otimes \bR$. 
\end{lemma}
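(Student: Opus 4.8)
The plan is to reduce the statement to the classical uniqueness theorem for the affine building of $\bfG$ over $k$, using the two hypotheses to pin down $j$ on a single apartment and then propagate by $G$-equivariance. First I would fix an apartment $\cA$ of $\cB$; by the building axioms (see \cite[\SSS{2.1}]{Tits}) the stabilizer $N$ of $\cA$ in $G$ acts on $\cA$ through an affine reflection group whose translation part is $X_*(\bfT)\otimes\bR$ modulo the part fixed by the Weyl group, i.e.\ after quotienting by $X_*(\Cent(G)^\circ)\otimes\bR$ the action on $\cA/(X_*(\Cent(G)^\circ)\otimes\bR)$ is by an \emph{affine Coxeter group} with trivial global fixed points beyond the center. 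Since $j|_\cA$ is affine and $G$-equivariant, $j(\cA)$ is an apartment of $\cB'$ and $j|_\cA$ intertwines the two $N$-actions; hence the affine map $j|_\cA$ differs from any other such intertwiner by an affine automorphism of $\cA$ commuting with the $N$-action, which by the structure of affine Coxeter groups is a translation by a vector fixed by the finite Weyl group, i.e.\ an element of $X_*(\Cent(G)^\circ)\otimes\bR$.

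The second step is to show that once $j|_\cA$ is fixed (up to that translation), $j$ is determined everywhere. Given any $x\in\cB$, choose $g\in G$ with $g\cdot x\in\cA$ — possible since any point lies in some apartment and $G$ acts transitively on apartments — and set $j(x):=g^{-1}\cdot j(g\cdot x)$ using $G$-equivariance (i). For this to be well-defined I would check that if $g_1\cdot x, g_2\cdot x\in\cA$ then $g_1^{-1}\cdot j(g_1\cdot x)=g_2^{-1}\cdot j(g_2\cdot x)$: the element $h=g_1 g_2^{-1}$ carries $g_2\cdot x$ to $g_1\cdot x$, both in $\cA$; by the building axioms some $n\in N$ agrees with $h$ on the relevant facet, and $G$-equivariance together with $j|_\cA$ intertwining the $N$-action gives the equality. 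This reproduces $j$, so any two bijections satisfying (i)–(ii) with the same restriction to $\cA$ coincide; combined with the first step, two such bijections differ by an element of $X_*(\Cent(G)^\circ)\otimes\bR$ acting by translation on each apartment compatibly.

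The main obstacle I anticipate is the bookkeeping in the first step: correctly identifying the group of affine automorphisms of a single apartment that commute with the action of the affine Weyl group $N/N_0$ (where $N_0$ acts trivially) and verifying that this group is exactly $X_*(\Cent(G)^\circ)\otimes\bR$ — in other words, that there is no extra "diagram automorphism" ambiguity because $j$ is required to be $G$-equivariant, not merely $W$-equivariant. I expect this to come down to the fact that the affine Weyl group acts on $\cA$ with the linear span of its translation subgroup equal to $X_*(\bfT)\otimes\bR$, so its centralizer in the affine group is the translations by the subspace of $W$-fixed vectors, which is $X_*(\Cent(G)^\circ)\otimes\bR$; the $G$-equivariance (as opposed to $N$-equivariance) rules out the reflection/rotation part. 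Everything else — transitivity of $G$ on apartments, the facet-wise agreement of $G$ with $N$, and the gluing in step two — is standard from \cite[\SSS{2.1}]{Tits} and \cite[\SSS{1.9.1}]{GY} and I would cite it rather than reprove it.
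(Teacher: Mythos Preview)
The paper does not prove this lemma at all: it introduces it as ``the uniqueness result stated in \cite[\SSS{1.2}]{GY}'' and only adds the remark that in its setting $\Cent(G)$ is anisotropic, so $j$ is in fact unique. There is thus no paper proof to compare against; your plan is a sketch of the standard argument behind the cited result.

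That sketch is essentially correct. The key computation is the one you isolate at the end: if $j_1,j_2$ both satisfy (i) and (ii) then $\phi:=j_2^{-1}\circ j_1$ restricted to $\cA$ is an affine self-map commuting with the $N$-action; since the translation subgroup of the image of $N$ in the affine group of $\cA$ spans $X_*(\bfT)\otimes\bR$, the linear part of $\phi|_\cA$ must be the identity, and commutation with the reflections then forces the translation vector to be $W$-fixed, i.e.\ to lie in $X_*(\Cent(G)^\circ)\otimes\bR$. Two places where you are doing more work than needed: your second step can be replaced by the one-line observation that $G\cdot\cA=\cB$ together with $G$-equivariance forces $j_1=j_2$ once they agree on $\cA$ (you are comparing two given maps, not constructing one, so no well-definedness check is required); and your worry about ``diagram automorphisms'' is unnecessary, since $N$-equivariance alone --- via commutation with the full lattice of cocharacter translations --- already kills the linear part, without needing the stronger $G$-equivariance.
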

In our cases, $\Cent(G)$ is anisotropic so the map $j$ is unique. 

\begin{proof}[Proof of \Cref{lem:TBC}] 
\begin{asparaenum}[(i)]
\item {
We consider the following diagram:
\[
\xymatrix{
\cB(\bfG,k) \ar@{==}[r] \ar@{^(->}[d]& \cB(\bfG,E)^{\Gal(E/k)} \ar@{^(->}[d] \\
\cB(\GL_k(V)) \ar@{=}[r] & \cB(\GL_E(\pV))^{\Gal(E/k)}.
}
\]
The buildings in the top row are the fixed point sets of the involutions $\sharp$ of the buildings in the bottom row. The bottom map is
$l_V \mapsto l_V \otimes (\val|_E)$.  It is a $\GL_k(V)$-invariant  map. It is a bijection, since it suffices to check this on an apartment, where it is obvious. The map 
sends self-dual norms to self-dual norms, hence it induces the top row isomorphism, by restriction. It is the canonical 
isomorphism $\cB(\bfG,k) \iso\cB(\bfG,E)^{\Gal(E/k)}$ by \Cref{lem:BBC}. 
This proves~(i). }

\item 
We refer to the computation in \cite[\SSS{1.13}]{BT3}.  
Let $\iota_1, \iota_2$ be two $k$-embeddings  of $D$ into $E$ (so $\iota$ is one of the two). 
Let $\pV^{\iota_i} = V\otimes_{D,\iota_i} E$. 
Then $V\otimes_k E\cong \pV^{\iota_1} \oplus \pV^{\iota_2}$, so we have a natural action of  $\Gal(E/k)$  on $\pV^{\iota_1} \oplus \pV^{\iota_2}$. 
Now Part (ii) follows by applying a similar argument as in (i) to the following diagram:
\[
\xymatrix{
\cB(\bfG,k) \ar@{==}[r] \ar@{^(->}[d]& \cB(\bfG,E)^{\Gal(E/k)} \ar@{^(->}[d] \\
\cB(\GL_D(V)) \ar@{=}[r] & \cB(\GL_E(\pV^{\iota_1})\times \GL_E(\pV^{\iota_2}))^{\Gal(E/k)}.
}
\]

\item  Note that
  $H(v_1,v_2) = \pr(\inn{v_1}{v_2}_V)$ defines a Hermitian form on
  $V$. Moreover, $\rG(k) = \rU(V,Q) \cap \rU(V,H)$. 
Part (iii) follows by applying a similar argument as in (i) to the following diagram:
\[
\xymatrix{
  \cB(\bfG,k) \ar@{==}[r] \ar@{^(->}[d]& \cB(\rU(\pV,
  \inn{}{}_{\pV}))^{\Gal(E/k)} \ar@{^(->}[d] \\ 
  \cB(\rU(V,H)) \ar@{=}[r] & \cB(\GL_L(\pV))^{\Gal(E/k)}. }
\]
\end{asparaenum}
\end{proof}

For an $\foo_D$-lattice function $\sL$ corresponding to
$x \in \cB(\bfG,k)$, we will denote by $\sL^E$ the $\foo_E$-lattice
function in $\pV$ corresponding to $x \in \cB(\bfG,E)^{\Gal(E/k)}$ in
the above proposition. We need the following application of \Cref{lem:BBC}
in our study of the epipelagic points.

\begin{lemma} \label{L621} Let $\sL$ be the self-dual lattice function
  corresponding to a point $x$ in $\cB(\bfG,k)$ of order $m$. Suppose
  $\bfG$ splits under a tamely ramified extension $E$ with
  ramification index $m$.  Then $\Jump(\sL)$ is contained in either
  $\frac{1}{m} \bZ$ or $\frac{1}{2 m} + \frac{1}{m} \bZ$.
\end{lemma}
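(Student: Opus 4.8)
The plan is to pass to the maximal unramified extension $\kur$ (which does not affect the jumps since base change to an unramified extension preserves the lattice function up to the obvious rescaling of the valuation), so that we may assume $\bfG$ is quasi-split and $E/k$ is totally ramified of degree $m$. Let $\pV$ be the $E$-space and $\sL^E$ the $\foo_E$-lattice function attached to $x \in \cB(\bfG,E)^{\Gal(E/k)}$ by \Cref{lem:TBC}, and let $l$, $l^E$ be the corresponding norms. Since $\bfG$ splits over $E$, the point $x$, viewed in $\cB(\bfG,E) = \cB(\GL_E(\pV))$ (or a product of two such in case (ii)), is a point whose Moy-Prasad filtration is that of a genuine point of a split classical group; in particular, after the field extension the relevant root values at $x$ lie in $\frac{1}{e}\bZ$ where $e$ is the ramification index \emph{over $E$}. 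The key point is that $x$ has order $m$ over $k$, i.e. $r(x) = \frac{1}{m}$, so over $E$ the smallest positive root value becomes $\frac{1}{m}\cdot m = 1$; hence $\Jump(\sL^E) \subseteq \bZ$ — that is, $l^E$ takes values in $\bZ$ (up to the global translation, which we normalize away, or absorb into the statement since $\Jump(\sL)=-\Jump(\sL)$ forces the symmetric normalization).

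Next I would translate this back to $V$. By \Cref{lem:TBC}, the norm $l$ on $V$ is recovered as $l = l^E \circ i_V$, and conversely $l^E = l \otimes_{D_0} (\val|_E)$ for the appropriate subfield $D_0 \in \{k, D, L\}$ over which the tensor is taken (equal to $k$ in case (i), the quadratic field $D$ in case (ii), and the quadratic subfield $L \subset D$ in case (iii)). The jumps of a tensor-product norm add: $\Jump(l \otimes_{D_0} (\val|_E)) = \Jump(l) + \Jump(\val|_E)$ by \eqref{eqJLL}. Here $\val$ is normalized so $\val(\varpi)=1$ on $k$, so on the totally ramified degree-$m$ extension $E$ we have $\val(E^\times) = \frac{1}{m}\bZ$, hence $\Jump(\val|_E) = \frac{1}{m}\bZ$. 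Therefore
\[
\Jump(\sL) + \tfrac{1}{m}\bZ = \Jump(\sL^E) \subseteq \bZ,
\]
which forces $\Jump(\sL) \subseteq \frac{1}{m}\bZ$ after we fix the normalization of $l$ so that $0 \in \Jump(\sL)$ — i.e., so that $x$ is a vertex-type normalization. Without that normalization the only freedom is a global shift by a constant $c$, and the constant is pinned down modulo $\frac{1}{m}\bZ$ by the self-duality $l^\sharp = l$: the dual norm is $l^\sharp(v) = -l(v) + (\text{const depending on conductor})$, and since the conductor of $\psi$ is $\fpp_k$ and $\varpi_D$ satisfies $\val(\varpi_D) = \val_D \in \{1, \frac12, 1\}$ in the three cases, self-duality forces $c \in \frac{1}{2m}\bZ$. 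Combining, $\Jump(\sL) \subseteq \frac{1}{2m}\bZ$ and lies in a single coset of $\frac{1}{m}\bZ$, i.e. $\Jump(\sL) \subseteq \frac{1}{m}\bZ$ or $\Jump(\sL) \subseteq \frac{1}{2m} + \frac{1}{m}\bZ$.

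The main obstacle I anticipate is bookkeeping the normalization constant correctly across the three division-algebra cases — specifically, in the quaternion case (iii) the relevant subfield is the quadratic $L$, not $D$ itself, and $\varpi_D^2 = \varpi$ means the natural $\foo_D$-scale is $\frac12\bZ$, so one must check that the tensor decomposition $D = L \oplus Ld$ and the form $Q(v_1,v_2) = \pr(\inn{v_1 d}{v_2}_V)$ from \Cref{lem:TBC}(iii) interact with duality so as to still land in the two stated cosets; similarly in the ramified quadratic case of (ii) where $\tau(\varpi_D) = -\varpi_D$. I would handle this by checking, in each case, that $l^\sharp$ differs from $-l$ by a constant in $\frac{1}{m}\bZ + \frac{1}{2m}\val_D \cdot(\text{integer})$ and reading off that the midpoint of a jump and its negative is pinned to $\frac{1}{2m}\bZ$. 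The rest is routine.
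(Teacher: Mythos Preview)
Your approach is the paper's, but two points deserve correction.

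First, a normalization slip. In the paper's normalization ($\val(\varpi)=1$, extended to $\bar k$), hyperspeciality of $x$ over $E$ gives $\Jump(\sL^E)=j_0+\tfrac{1}{m}\bZ$ for some $j_0\in[0,\tfrac{1}{m})$, \emph{not} $\Jump(\sL^E)\subseteq\bZ$. Your displayed inclusion $\Jump(\sL)+\tfrac{1}{m}\bZ\subseteq\bZ$ is therefore off by a factor of $m$; the correct consequence is only $\Jump(\sL)\subseteq j_0+\tfrac{1}{m}\bZ$ for an a priori unknown $j_0$. (The paper obtains hyperspeciality by citing \cite[\S4.2]{RY}; your heuristic ``smallest positive root value becomes $1$'' is the right intuition but not a replacement for that citation.)

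Second, and more importantly, the self-duality step is much simpler than you make it. You do not need to compute the dual-norm constant, track the conductor of $\psi$, or distinguish the three division-algebra cases. The paper uses only the set-theoretic equality $\Jump(\sL)=-\Jump(\sL)$, which follows immediately from the nondegenerate residual pairing $\sfL_r\times\sfL_{-r}\to\fff_D$ for a self-dual lattice function. Combining $\Jump(\sL)\subseteq j_0+\tfrac{1}{m}\bZ$ with $\Jump(\sL)=-\Jump(\sL)$ forces $-j_0+\tfrac{1}{m}\bZ=j_0+\tfrac{1}{m}\bZ$, i.e.\ $2j_0\in\tfrac{1}{m}\bZ$, so $j_0\in\{0,\tfrac{1}{2m}\}$. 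This handles all cases uniformly, and your anticipated bookkeeping obstacles in cases (ii) and (iii) simply do not arise.
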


\begin{proof}
  We have $\val(E) = \fracmm \bZ$.  Let $\sL^E$ be the
  $\foo_E$-lattice function. Let $J = \Jump(\sL)$ and
  $J^E=\Jump(\sL^E)$.  By \cite[\SSS{4.2}]{RY}, $\sL^E$ corresponds to
  a hyperspecial point in $\bfG(E)$. Hence $J^E = j_0 + \fracmm \bZ$
  for some $j_0\in [0,\fracmm)$.
  
  By \Cref{lem:TBC}, we have $J \subseteq J^E = J+\fracmm
  \bZ$. Since $\sL$ is self-dual, $J = -J$. Hence 
  $-j_0+\fracmm \bZ =-J^E \subseteq - J +
  \fracmm\bZ \subseteq J +\fracmm \bZ \subseteq J^E + \fracmm \bZ
  = j_0 +\fracmm \bZ$. Therefore $j_0 = 0$ or $\fracdmm$. 
  The lemma follows. 
\end{proof}

\subsection{Epipelagic points} \label{sec:EP} 
 Let $x \in \cB(\bfG,k)$ be an epipelagic point of order $m$.
We recall that $\kur$ is
the maximal unramified extension of $k$. Let~$E$ be the totally
ramified extension of~$\kur$ of degree~$m$. We fix a uniformizer
$\varpi_E$ such that $\varpi_E^m = \varpi$. Let
$\Gal(E/\kur) = \langle \sigma \rangle$ where
$\sigma(\varpi_E) = \zeta \varpi_E$ and $\zeta$ is a primitive $m$-th
root of unity.  Let $\Fr \in \Gal(E/k)$ denote the lift of the Frobenius
automorphism in $\Gal(\kur/k)$ such that $\Fr(\varpi_E) = \varpi_E$. Now
$\Gal(E/k) = \braket{\Fr, \sigma}$.

The group $\bfG$ splits over $E$ \cite[\SSS{4.1}]{RY}.  We recall \Cref{sec:N}
that $\bfT$ is a maximally $\kur$-split torus in $\bfG$ containing $\bfS$
and defined over $k$.  Let $\bfY = \Cent_{\bfG}(\bfT)$ be a Cartan
subgroup of $\bfG$.  Then $x$ is a hyperspecial point in
$\cA(\bfY,E)$. We have $\bfG(E)_x^\sigma = \bfG(\kur)_x$,
$\bfG(E)_{x,r}^\sigma = \bfG(\kur)_{x,r}$ for $r > 0$, and
$\varpi_E^d \fgg(E)_{x,0} = \fgg(E)_{x,\frac{d}{m}}$.  The building
$\cB(\bfG,\kur)$ embeds into $\cB(\bfG,E)$ as the $\sigma$-invariant set
and
\[
\cB(\bfG,k)  = \cB(\bfG,\kur)^\Fr = (\cB(\bfG,E)^\sigma)^\Fr.
\]

We set $D(E) = D\otimes_k E$. We equip $D(E)$ with the tensor product norm
  of valuations of $D$ and $E$. Let $\sE$ and
$\sD$ be the lattice functions in $E$ and $D$ respectively defined by their valuations.  Then
$\sD\otimes \sE$ is the corresponding lattice function on $\DE$. Let
$\foo_{D(E)} = (\sD\otimes \sE)_0$, 
$\fpp_{D(E)} = (\sD\otimes \sE)_{0^+}$ and
$\fff_{D(E)} = \foo_{D(E)}/\fpp_{D(E)}$ which is a semisimple algebra over $\fff_E$.

Let $\sL$ be the self-dual lattice function in $V$ corresponding to
$x \in \cB(\bfG,k)$. Let $\sL_E := \sL \otimes \sE$ be the self-dual
$\foo_k$-lattice function in $V(E) := V \otimes_k E$. In fact it is an
$\fooDE$-lattice function. We have following situations.
\begin{enumerate}[(i)]
\item If $D = k$, then {$\sL_E = \sL^E$.}
\item If $D$ is a quadratic extension of $k$, then
  $D(E) \cong E\times E$, $V(E) = \pV^{\iota_1}\oplus \pV^{\iota_2}$,
  $\sL_E = \sL^{\iota_1, E} \oplus \sL^{\iota_2, E}$ where
  $\pV^{\iota_1}$ and $\pV^{\iota_2}$, $\sL^{\iota_1,E}$ and
  $\sL^{\iota_2,E}$ correspond to the two different $k$-embeddings of
  $D$ into $E$.
	
\item Suppose $D$ is the quaternion algebra over $k$. We fix a quadratic
  extension $L$ of $k$ in $E$. Then $D(E) \cong \Mat_2(E)$,
  $V(E) = \pV^{\iota_1}\oplus \pV^{\iota_2}$ and
  $\sL_E = \sL^{\iota_1, E} \oplus \sL^{\iota_2, E}$ where
  $\sL^{\iota_1,E}$ and $\sL^{\iota_2,E}$ are $\foo_E$-lattice
  functions corresponding to the two different $k$-embeddings of $L$
  into $E$.
\end{enumerate}
Clearly, $\Jump(\sL_E) = \Jump(\sL^{\iota_i, E}) = \Jump(\sL) + \fracmm\bZ$. The
Galois group $\Gal(E/k)$ acts on $\VE$ by $s(v \otimes x) = v \otimes s(x)$ for
$s \in \Gal(E/k)$, $v \in V$ and $x \in E$. For $g\in \bfG(E)$
and $s \in \Gal(E/k)$, we have $s(g) = s \circ g\circ s^{-1}$ as $\DE$-linear
automorphism on $\VE$.  In Cases (ii) and (iii), under the
decomposition, $\bfG(E)$ acts diagonally on
$V(E) = \pV^{\iota_1}\oplus \pV^{\iota_2}$.

Extending the notation in  \Cref{def:DLattic}, 
we have $\bfG(E)_x = \bfG(E)_{\sL_E}$, $\bfG(E)_{x,r} = \bfG(E)_{\sL_E,r}$ and
$\fgg(E)_{x,r} = \fgg(E)_{\sL_E,r}$ by \Cref{lem:TBC}.

\subsection{Kac-Vinberg gradings} \label{sec:Kac-Vinberg}
{In \cite[\SSS{4}]{RY}, Reeder and Yu connect the Moy-Prasad filtration at an epipelagic point with the Kac-Vinberg gradings of Lie algebras over the residue fields. 
We review their results here.}

By the classification of hyperspecial points for split classical groups (see Remark in \Cref{sec:apartment}), we can pick a point $x_0 \in \cA(\bfT, \kur)$ following the recipe in \cite[Section 3.2]{RY} such that $\Jump(\sLEz) = \Jump(\sLE)$ where $\sLEz$ is the lattice function in $V(E)$ corresponding to $x_0$.
The action of the generator $\sigma$ of $\Gal(E/\kur)$ on the Cartan subgroup $\bfY$ induces an action $\vartheta$ on $X_*$. 
Then $x = x_0 + \frac{1}{m}\crho$ where $\crho \in X_*^\vartheta$. 
Let $t:=\crho(\varpi_E) \in \bfG(E)$.
The $\foo_{D(E)}$-lattice function corresponding to $x_0$ is  $\sLEz = t^{-1}
\sLE$. 

We have isomorphisms
\[
  \bfG(E)_{x_0} \xrightarrow{\Ad(t)} \bfG(E)_x \quad \text{and} \quad
  \fgg(E)_{x_0,0} \xrightarrow{\Ad(t)} \fgg(E)_{x,0}\xrightarrow{\varpi_E^j}
  \fgg(E)_{x,\frac{j}{m}}.
\]
Let $\sfGE = \bfG(E)_{x_0}/ \bfG(E)_{x_0,0^+}$ and
$\sfggE = \fgg(E)_{x_0,0}/ \fgg(E)_{x_0,0^+}$. 
Let $\vartheta$ be the  automorphisms  on $\sfG(E)_{x_0}$ and $\sfg(E)_{x_0}$ induced by the $\sigma$ actions
on $\bfG(E)_{x_0}$ and $ \fgg(E)_{x_0,0}$ respectively. 
Let $\theta :=\Ad(t^{-1})\circ \sigma \circ \Ad(t)$  be the automorphisms on
$\sfGE$ and on $\sfggE$ induced by the $\sigma$  actions
on $\bfG(E)_x$ and $\fgg(E)_{x,0}$.  
Let $\sfggE^{\theta,\zeta^{-j}}$
be the $\zeta^{-j}$-eigenspace of $\theta$ on $\sfggE$. Then
\begin{enumerate}[(a)]
\item $\theta = \Ad(\tt) \vartheta$ where $\tt =
  t^{-1} t^\sigma = \crho(\zeta) \pmod{G(E)_{x_0,0^+}}$;

\item
  $\Ad(t) \colon \sfGE^\theta \iso \sfG_x := \bfG(\kur)_{x}/\bfG(\kur)_{x,0^+}$
  is an isomorphism and

\item
  $ \varpi_E^j\ad(t) \colon \sfggE^{\theta,\zeta^{-j}} \iso
  \sfg_{x,\frac{j}{m}}$
  is $\sfGE^\theta$-equivariant with $\sfGE^\theta$ acting on the right hand
  side via~(b). Here we recall $\sfg_{x,\frac{j}{m}}$ in \Cref{sec:EpiP}.
\end{enumerate}
By putting $j = -1$ in (c), we define
$\iotag :=\varpi_E^{-1} \ad(t)\colon \sfggE^{\theta, \zeta} \iso \sfg_{x,-\fracmm}$.
Then $\iotag$ is a bijection between the set of stable vectors for the
$\sfGE^\theta$ action on $\sfggE^{\theta,\zeta}$ and the set of stable
vectors for the action $\sfGKx$ on $\sfg_{x,-\fracmm}$. The former
was studied by Vinberg \cite{Vin} and Levy \cite{Levy}.

\section{Moment maps}\label{sec:mm}

\subsection{} \label{sec:MM.gen} Let $W = V \otimes_D V'$. Using the
sesquilinear forms, we define $\Psi\colon W \iso \Hom_D(V,V')$ and
$\Psi' \colon W \iso \Hom_D(V',V)$ by
\[
\Psi(v \otimes v') (v_1) = v' \inn{v}{v_1}_{V} \mbox{ and } \Psi'(v
\otimes v') (v_1') = v \inn{v'}{v_1'}_{V'}
\]
for all $v, v_1 \in V$ and $v', v_1' \in V'$.  Now $g\in G$ and
$g'\in G'$ acts on $\Hom_D(V',V)$ by the formula
$(g,g')\cdot w = g'w g^{-1}$.

\begin{definition}\label{def:Parings}
\begin{asparaenum}[1.]

\item We define a non-degenerate $G$-invariant symmetric $k$-bilinear
  form\footnote{We warn that our trace form $\Bg$ has a factor of $\frac{1}{2}$.} $\Bg \colon \fgg\times \fgg\to k$ by $\Bg(X_1,X_2) = \half
  \tr_{D/k}\tr(X_2^*X_1)$.

\item 
We define an operator $\star\colon \Hom_D(V,V') \rightarrow
\Hom_D(V',V)$ by
 \[
\inn{w(v)}{v'}_{V'} = \inn{v}{w^\star(v')}_V \quad \forall
w\in \Hom_D(V,V'), v\in V, v'\in V'.
\]
We note that if $x \in W$ and $\Psi(x) = w$, then $\Psi'(x) = w^\star$.

\item We define the moment map $M\colon W \cong \Hom_D(V,V') \to \fgg$ 
and $M'\colon W\to \fgg'$ by  
\[
M(w) = w^\star w \mbox{ and } M'(w) = w w^\star.
\] 
\end{asparaenum}
\end{definition}

By definition $M$ and $M'$ are $G \times G'$-equivariant. 

\begin{lemma}\label{lem:MPsi}
Suppose $w_1,w_2, w\in W$, $X\in \fgg$ and $X'\in \fgg'$. Then
\begin{asparaenum}[(a)]
\item $\inn{w_1}{w_2}  = \tr_{D/k}\tr(w_2^\star w_1)$,
\item $\inn{X\cdot w}{w} = 2 \Bg(M(w),X)$ and $\inn{X'\cdot
    w}{w} = 2 \Bgp(-M'(w),X')$.
\end{asparaenum}
\end{lemma}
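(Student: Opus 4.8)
The plan is to verify both statements by direct computation, reducing everything to the definitions of the forms $\inn{}{}$, $\inn{}{}_V$, $\inn{}{}_{V'}$, the operator $\star$, and the trace form $\Bg$. Since all maps in sight are $D$-bilinear (or $k$-bilinear) in the appropriate sense, it suffices to check each identity on pure tensors $w = v\otimes v'$, $w_1 = v_1\otimes v_1'$, $w_2 = v_2 \otimes v_2'$, and then extend by linearity.

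For part (a): I would start from the right-hand side. Writing $w_1 = v_1\otimes v_1'$ and $w_2 = v_2\otimes v_2'$, under the identification $\Psi\colon W\iso\Hom_D(V,V')$ we have $w_i \mapsto \Psi(w_i)$ with $\Psi(v_i\otimes v_i')(u) = v_i'\inn{v_i}{u}_V$, and by the definition of $\star$ (together with the observation recorded in \Cref{def:Parings}.2 that $\Psi'(w_2) = \Psi(w_2)^\star$) we get $\Psi(w_2)^\star(u') = v_2\inn{v_2'}{u'}_{V'}$. Composing, $\Psi(w_2)^\star\Psi(w_1)$ sends $u \in V$ to $v_2 \inn{v_2'}{v_1'}_{V'}\inn{v_1}{u}_V$, so its trace over $D$ is $\tr(\Psi(w_2)^\star\Psi(w_1)) = \inn{v_2}{v_1}_V\inn{v_2'}{v_1'}_{V'}$ up to placing the scalars correctly using the sesquilinearity conventions. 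Applying $\tr_{D/k}$ and comparing with the defining formula \eqref{equ4} for the symplectic form, $\inn{v_1\otimes v_1'}{v_2\otimes v_2'} = \tr_{D/k}(\inn{v_1}{v_2}_V\inn{v_1'}{v_2'}_{V'}^\tau)$, gives the claim once one uses $\inn{v_1'}{v_2'}_{V'}^\tau$ versus $\inn{v_2'}{v_1'}_{V'}$ — this is where the $\epsilon'$-Hermitian symmetry and the behaviour of $\tr_{D/k}$ under $\tau$ enter.

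For part (b): apply part (a) with $w_1 = X\cdot w = (Xw)$ (viewing $w\in\Hom_D(V,V')$, so $X\in\fgg = \fuu(V)$ acts by $w \mapsto -wX$, or $w\mapsto wX^*$ since $X^* = -X$) and $w_2 = w$. Then $\inn{X\cdot w}{w} = \tr_{D/k}\tr(w^\star (X\cdot w))$. Unwinding the action of $G\times G'$ recorded before \Cref{def:Parings} — namely $(g,g')\cdot w = g'wg^{-1}$, so the Lie algebra action of $X\in\fgg$ is $w\mapsto -wX$ — and using $\star$-adjointness together with $X^\star$ corresponding to $X^*$, one rewrites $\tr(w^\star(X\cdot w))$ as $\tr(X^* w^\star w) = \tr((w^\star w)^* X)$ up to sign, which by the definition $\Bg(X_1,X_2) = \half\tr_{D/k}\tr(X_2^*X_1)$ and $M(w) = w^\star w$ equals $2\Bg(M(w),X)$. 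The computation for $X'\in\fgg'$ is identical with the roles of $V$ and $V'$ swapped, and the extra sign (giving $-M'(w)$) comes from the fact that $G'$ acts on $W \cong \Hom_D(V',V)$ on the left while $G$ acts on the right, i.e. from the $g^{-1}$ versus $g'$ asymmetry in $(g,g')\cdot w = g'wg^{-1}$.

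The main obstacle is bookkeeping of signs and of the factor $\half$: one must track carefully (i) the factor $\half$ built into $\Bg$ against the factor $2$ in the statement, (ii) the sign coming from $X + X^* = 0$ each time one moves $X$ across $\star$, and (iii) the opposite-handedness of the $G$- and $G'$-actions on $W$, which is exactly what produces $+M(w)$ for $G$ but $-M'(w)$ for $G'$. None of these steps is deep, but getting all three consistent simultaneously — while also respecting the $\tau$-twist in the sesquilinear conventions when passing through $\tr_{D/k}$ — is the only place an error could creep in. I would organize the write-up so that part (a) is proved first on pure tensors, and then part (b) is an immediate two-line consequence of part (a) plus the definitions of $M$, $M'$, and $\Bg$.
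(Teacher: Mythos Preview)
Your proposal is correct and follows exactly the approach the paper indicates: the paper's proof is simply ``a straightforward computation using \eqref{equ4} and the definition of~$\star$'' left to the reader, and what you outline is precisely that computation on pure tensors, with part~(b) derived from part~(a) together with the definitions of $M$, $M'$, and $\Bg$. Your identification of the sign and factor-of-$\half$ bookkeeping as the only delicate point is accurate.
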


The proof is a straightforward computation using \eqref{equ4} and the
definition of $\star$. We will leave it to the reader.

\subsection{}
Let $\sL$ and $\sL'$ be two self-dual lattice functions on $V$ and
$V'$ respectively.  Let $\sB =\sL\otimes \sL'$ on $W=V\otimes_D V'$.

\begin{lemma} \label{lem:MB} 
\begin{enumerate}[(i)]
\item We have $\Jump(\sB) = \Jump(\sL) + \Jump(\sL')$.
\item The lattice function $\sB$ is self-dual in $W$,
  i.e. $\sB_r^\sharp = \sB_{-r^+}$.
\item Under the isomorphism $\Psi\colon W \iso \Hom_D(V,V')$, 
\[
 \Psi(\sB_r) = \Set{w\in \Hom_D(V,V')| w\sL_s \subseteq \sL'_{s+r}\ \forall s\in \bR}.
\]
\item We have $(\Psi(\sB_r))^\star = \Psi'(\sB_r)$.   
\item \label{lem:MB.M} We have $M(\sB_r) \subseteq \fgg_{\sL,2r}$ and
  $M'(\sB_r) \subseteq \fgg'_{\sL',2r}$. 
\end{enumerate}
\end{lemma}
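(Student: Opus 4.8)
The plan is to prove the five parts of \Cref{lem:MB} mostly by unwinding definitions, using the tensor-product formalism for norms and lattice functions established in \Cref{S42} and \Cref{S43}, together with the identities relating $\Psi,\Psi',\star$ and the moment maps from \Cref{def:Parings} and \Cref{lem:MPsi}. Parts (i) and (ii) are essentially already recorded: (i) is \eqref{eqJLL} applied to $\sB=\sL\otimes\sL'$, and (ii) is the Remark after \Cref{D421}, which shows $(l_\sL\otimes l_{\sL'})^\sharp=l_\sL\otimes l_{\sL'}$ when $\sL,\sL'$ are self-dual; I would just restate these and point to those places.

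For (iii), I would compute $\Psi(\sB_r)$ directly. By definition $\sB_r=\sum_{s+s'=r}\sL_s\otimes_{\foo_D}\sL'_{s'}$, so it suffices to see that $\Psi(v\otimes v')$ for $v\in\sL_s$, $v'\in\sL'_{s'}$ with $s+s'=r$ sends $\sL_t$ into $\sL'_{t+r}$: indeed $\Psi(v\otimes v')(v_1)=v'\inn{v}{v_1}_V$, and for $v_1\in\sL_t$ the pairing $\inn{v}{v_1}_V$ lies in $\sL_{s}$ paired against $\sL_t$, which by self-duality of $\sL$ (more precisely $\sL^\sharp_s=(\sL_{(-s)^+})^\sharp$ and the pairing $\sL_a\times\sL_b\to\foo_D$ whenever $a+b\ge 0^+$) lands in $\varpi_D^{?}\foo_D$ with the right valuation, so $v'\inn{v}{v_1}_V\in\sL'_{s'}\cdot\foo_D$ shifted by $t+s$; summing up the exponents gives membership in $\sL'_{t+r}$. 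Conversely any $w$ with $w\sL_s\subseteq\sL'_{s+r}$ lies in $\Psi(\sB_r)$ because $\Psi$ is an isomorphism $W\iso\Hom_D(V,V')$ carrying the tensor-product lattice function $\sL^*\otimes\sL'$ (equivalently $\Hom(\sL,\sL')$ in the notation of \eqref{eqhomr}) onto the right-hand side; here I would invoke \Cref{S42}, which identifies $\Hom(l,l')$ with $l'\otimes l^*$, together with the fact that $\Psi$ is built from the identification $V^*\cong V$ via $\inn{}{}_V$, i.e. $l^*$ corresponds to $l^\sharp=l_\sL$. So the two descriptions of $\Psi(\sB_r)$ agree. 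Part (iv) is then immediate from item 2 of \Cref{def:Parings}: if $x\in W$ and $\Psi(x)=w$ then $\Psi'(x)=w^\star$, so applying $\Psi'$ to $\sB_r$ gives exactly $(\Psi(\sB_r))^\star$.

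Finally (v) follows by composing: $M(w)=w^\star w$, and by (iii) and (iv) we have $w\sL_s\subseteq\sL'_{s+r}$ and $w^\star\sL'_{s'}\subseteq\sL_{s'+r}$ for all $s,s'$, hence $w^\star w\,\sL_s\subseteq w^\star\sL'_{s+r}\subseteq\sL_{s+2r}$, which means $M(w)\in\fgl(V)_{\sL,2r}$; since $M(w)\in\fgg$ this gives $M(w)\in\fgg_{\sL,2r}$, and symmetrically for $M'$. The only mild subtlety — and the place I would be most careful — is (iii): one must correctly track how the self-duality of $\sL$ (stated via $\sL_r^\sharp=(\sL_{(-r)^+})^\sharp$ and the residue-field pairing $\sfL_r\times\sfL_{-r}\to\fff_D$) translates into the precise statement ``$w\mapsto w^\star$ preserves the valuation-shift conditions'', i.e. that the identification $V\cong V^*$ used to define $\Psi$ carries $l_\sL$ to $l_\sL^*$ rather than off by a shift. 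Once that normalization is pinned down (it matches the conventions already fixed in \Cref{S43}), the rest of the lemma is a routine bookkeeping of valuations.
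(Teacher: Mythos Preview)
Your proposal is correct and follows essentially the same approach as the paper: (i) and (ii) are pointers to \eqref{eqJLL} and the Remark in \Cref{S43}, (iii) is the identification of $\Hom(\sL,\sL')$ with $\sL^*\otimes\sL'$ from \Cref{S42} combined with $l^*\leftrightarrow l^\sharp=l$ under the form, (iv) is the identity $\Psi'(x)=\Psi(x)^\star$, and (v) is the composition $w^\star w\colon \sL_s\to\sL'_{s+r}\to\sL_{s+2r}$. The only difference is cosmetic: the paper dispatches (iii) purely via the norm identification and skips your preliminary elementwise computation (which is in any case superseded by the conceptual argument you give afterward), and it records (v) in one line rather than writing out the chain.
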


\begin{proof}
Part (i) is \Cref{eqJLL}. Part (ii) is explained in the Remark in \Cref{S43}. 
By~\eqref{eqhomr} the right hand side of (iii) is the lattice function on $\Hom_D(V,V') = V^* \otimes_D V'$. 
On the other hand $\Psi$ maps $V \otimes V'$ to $V^* \otimes V'$. 
We have seen in \Cref{S43} that the norm $l^*$ on $V^*$ translates to the norm $l^\sharp = l$ on $V$.  It follows that the lattice function on the right hand side of (iii) corresponds to  the lattice function $\sB$ under $\Psi$. This proves~(iii).  If $w \in W$ then $\Psi'(w) = \Psi(w)^\star$. This proves (iv). 
  Part (v) follows directly from (iii) and~(iv).
\end{proof}

\def\bfGsL{{\bfG_{\sL}}}
\def\bfGsLp{{\bfG'_{\sL'}}}

\subsection{}
We could view $\sB_s$, $\fgg_{\sL,2s}$ and $\fgg'_{\sL',2s}$ as
schemes over $\foo_k$. Since $\star$ is $\foo_k$-linear, the moment
maps defined over the generic fibers as in \Cref{sec:MM.gen} extend to
morphisms between these $\foo_k$-schemes. The $\foo_k$-group scheme
$\bfGsL\times \bfGsLp$ acts on all these objects and the moment maps
are equivariant maps.

Let $\sfW_s = \sB_s/\sB_{s^+}$,
$\sfg_{\sL,2s} = \fgg_{\sL,2s}/\fgg_{\sL,2s^+}$ and
$\sfg'_{\sL',2s} = \fgg'_{\sL',2s}/\fgg'_{\sL',2s^+}$.  We get
morphisms, as certain quotients of the moment maps over the special
fiber,
\begin{equation} \label{eq:MM.s} 
\sfM_s \colon \sfW_s \rightarrow \sfg_{\sL,2s}
  \quad \text{and} \quad \sfM'_s \colon \sfW_s \rightarrow \sfg'_{\sL',2s}.
\end{equation}
The actions of $\bfGsL\times \bfGsLp$ reduce to
$\sfG_{\sL}\times \sfG'_{\sL'}$ actions on $\sfW_s$, $\sfg_{\sL,2s}$
and $\sfg_{\sL',2s}'$. These are the moment maps over the residual
field $\fff$ which we will study later.

\section{Theta correspondences I} \label{sec:thetaI}

\subsection{} \label{sec:bfX}
In this section we let $(G,G')$ be a reductive dual pair
in $\Sp(W)$ as in \Cref{sec:dualpair}. 

Let $\Sigma = (x,\lambda, \chi)$ and $\Sigma' = (x',\lambda',\chi')$
be epipelagic supercuspidal data for $G$ and $G'$ respectively.  Let
$\piSigma = \pi_x^{\tG}(\lambda,\chi)$ (resp.
$\piSigmap = \pi_{x'}^{\tG'}(\lambda',\chi')$) be the corresponding epipelagic
representation of $\tG$ (resp. $\tG'$).  From now on, we assume
$\theta(\piSigma) = \piSigmap$. As discussed in \Cref{S14}, $x$ and $x'$ are
both epipelagic points of order $m$ for some $m\geq 2$.

Let $\sL$ and $\sL'$ be self dual lattice functions corresponding to $x$ and
$x'$ respectively. Let $\sB = \sL \otimes \sL'$. Let
$\sfX = \sfW_{-\fracdmm} = \sB_{-\fracdmm}/ \sB_{-\fracdmm^+}$. We denote the
moment maps defined in \eqref{eq:MM.s} by
$\sfM \colon \sfX \rightarrow \sfg_{\sL,-\fracmm}$ and
$\sfM' \colon \sfX \rightarrow \sfg'_{\sL',-\fracmm}$.

\begin{prop} \label{prop:M.w} Suppose that
  $\theta(\piSigma) = \piSigmap$. Then there exists a
  $\bar{w} \in \sfX$ such that $ \lambda = \sfM(\bar{w}) $ and
  $\lambda' = -\sfM'(\bar{w})$.
\end{prop}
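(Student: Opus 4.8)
The plan is to extract the vector $\bar w$ from the hypothesis $\theta(\pi_\Sigma) = \pi'_{\Sigma'}$ by analyzing what the theta correspondence says at the level of the minimal $K$-types, realized via the lattice model $\sS(A)$ with $A = \sB_0$ (which is self-dual in $W$ by \Cref{lem:MB}(ii), and is stabilized by $G_x$ and $G'_{x'}$). First I would use Frobenius reciprocity: since $\pi_\Sigma = \ind_{H_{x,\lambda}\times\bC^\times}^{\tG}((\psi_\lambda\otimes\chi)\boxtimes\rid)$ and $\theta(\pi_\Sigma) = \pi'_{\Sigma'}$ is a quotient of the oscillator representation restricted to $\tG\times\tG'$, the space $\Hom_{\tG\times\tG'}(\omega, \pi_\Sigma^*\otimes\pi'_{\Sigma'}{}^*)$ is nonzero, and this translates into the existence of a nonzero vector $f$ in a suitable twisted Jacquet-type module of $\sS(A)$ on which $H_{x,\lambda}$ acts by $\psi_\lambda^{-1}$ and $H'_{x',\lambda'}$ acts by $\psi_{\lambda'}^{-1}$ (up to contragredients and the characters $\chi,\chi'$, which play no role for this statement).

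Next I would localize at depth $\tfrac{1}{m}$. The characters $\psi_\lambda$ and $\psi_{\lambda'}$ are trivial on $G_{x,\fracmm^+}$ and $G'_{x',\fracmm^+}$, and on the depth-$\fracmm$ pieces they are given by $\psi\circ\lambda$ and $\psi\circ\lambda'$. Using the lattice-model formula \eqref{eq:LA} for the action of $\Sp_A$, the condition that $G_{x,\fracmm}$ acts on $f$ through $\psi_\lambda$ forces $f$ to be supported, modulo $\sB_0$, on a single coset $\bar w \in \sB_{-\fracdmm}/\sB_{-\fracdmm^+} = \sfX$: indeed an element $g\in G_{x,\fracmm}$ corresponds via the Cayley transform to $X\in\fgg_{x,\fracmm}$, acts on $\sB_{-\fracdmm}$ with $(g-1)\sB_{-\fracdmm}\subseteq\sB_{\fracdmm}\subseteq\sB_0 = A$, and the resulting character on $f(w)$ is $\psi(\tfrac12\inn{(g-1)w}{w})$; by \Cref{lem:MPsi}(b) this equals $\psi(\Bg(M(w),X)) = \psi_{X}$-pairing against $M(w)$, and matching this with $\psi_\lambda(g) = \psi(\lambda(X))$ for all such $X$ gives $\sfM(\bar w) = \lambda$. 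The same computation on the $G'$ side, using the sign in \Cref{lem:MPsi}(b) (the form on $W$ is $(-\epsilon)$-Hermitian from the $V'$ side, equivalently $G'$ acts via $\inn{X'\cdot w}{w} = 2\Bgp(-M'(w),X')$), yields $\sfM'(\bar w) = -\lambda'$, i.e. $\lambda' = -\sfM'(\bar w)$.

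The remaining point is to verify that $\bar w$ is well defined and genuinely gives a single coset — i.e. that the twisted Jacquet module is computed by the ``correlation of orbits'' à la Mœglin–Vignéras–Waldspurger, so that a nonzero $f$ really does concentrate on one $\sB_0$-coset in $\sB_{-\fracdmm}$ after averaging against $\psi_\lambda^{-1}$ over $G_{x,\fracmm}/G_{x,\fracmm^+} \cong \sfg_{x,\fracmm}$. Concretely, averaging $f$ against $\psi_\lambda$ over this finite group projects onto the subspace where the $\psi_\lambda$-eigenvalue condition holds, and the support of any such function, reduced mod $\sB_0$, lands in the fiber $\sfM^{-1}(\lambda)\subseteq\sfX$; choosing any $\bar w$ in the support does the job once we also know $f\neq 0$, which is exactly the nonvanishing of the theta lift. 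I expect the main obstacle to be this bookkeeping step: making the passage from ``$\theta(\pi_\Sigma) = \pi'_{\Sigma'}$'' to ``a nonzero vector in $\sS(A)$ with the two prescribed character properties'' fully rigorous, since it requires choosing the right model, checking that $A = \sB_0$ is simultaneously compatible with both epipelagic structures (so that $\fgg_{x,\fracmm}$ and $\fgg'_{x',\fracmm}$ act through the moment maps landing in $\sB_0$), and invoking the depth-preservation of Pan's work to guarantee $m = m'$ so that $\sfX = \sB_{-\fracdmm}/\sB_{-\fracdmm^+}$ is the correct object. Once these compatibilities are in place, the identities $\lambda = \sfM(\bar w)$ and $\lambda' = -\sfM'(\bar w)$ drop out of \Cref{lem:MPsi}.
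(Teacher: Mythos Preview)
Your approach differs substantially from the paper's, and the gap you flag as a ``bookkeeping step'' is in fact the whole content of the proposition.

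The paper's proof is two lines: it invokes Pan's theorem \cite{Pan} as a black box to produce \emph{some} pair of points $(y,y')\in\cB(\bfG,k)\times\cB(\bfG',k)$ and a vector $w\in(\sL_y\otimes\sL'_{y'})_{-\fracdmm}$ such that $M(w)+\fgg_{y,-\fracmm^+}$ and $-M'(w)+\fgg'_{y',-\fracmm^+}$ are minimal $K$-types of $\piSigma$ and $\piSigmap$; then it uses \Cref{prop:MKT}(i) (all unrefined minimal $K$-types of an epipelagic representation are $G$-conjugate) and the $G\times G'$-equivariance of the moment maps to transport $(y,y')$ to $(x,x')$ and $w$ into $\sB_{-\fracdmm}$. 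The key point is that Pan's result does the hard analytic work of locating the support, and \Cref{prop:MKT} lets one move to the prescribed epipelagic points afterwards.

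Your plan tries to bypass Pan by working directly at $(x,x')$ in the lattice model. Two concrete problems arise. First, your choice $A=\sB_0$ need not be self-dual: \Cref{lem:MB}(ii) gives $\sB_0^\sharp=\sB_{0^+}$, so $\sB_0$ is self-dual iff $0\notin\Jump(\sB)$, which is exactly the content of \Cref{cor:JP} --- a \emph{consequence} of this proposition. Second, and more seriously, your argument that a nonzero eigenvector $f$ for $(\psi_\lambda,\psi_{\lambda'})$ is supported in $\sB_{-\fracdmm}$ modulo $A$ is unjustified: the computation via \Cref{lem:MPsi} only tells you the character at points $w$ where $(g-1)w\in A$, i.e.\ where $w\in\sB_{-\fracdmm}$; it does not force the support of $f$ to lie there. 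Establishing this support condition from the bare hypothesis $\theta(\piSigma)=\piSigmap$ is precisely Pan's theorem, and requires substantial work (generalized lattice models, analysis of $K$-types across the building). The paper does not reprove this; it cites it and then reduces to the given $(x,x')$ via the conjugacy statement \Cref{prop:MKT}, which in turn rests on the goodness of $\Gamma$ (\Cref{lem:reg}) and the anisotropy of its centralizer.
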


\begin{proof}
  By \cite{Pan} there exists
  $(y,y') \in \cB(\bfG,k)\times \cB(\bfG',k)$,
  $\sB' = \sL_y \otimes \sL_{y'}'$ and $w \in \sB_{-\frac{1}{2m}}'$
  such that $M(w)+\fgg_{y,-\fracmm^+}$ is an unrefined minimal
  $K$-type of $\piSigma$, and $-M'(w)+\fgg'_{y',-\fracmm^+}$ is an
  unrefined minimal $K'$-type of $\piSigmap$. The moment maps $M$ and $M'$
  commute with the action of $G \times G'$-conjugation.  By
  \Cref{prop:MKT}  and conjugating by $G \times G'$, we may assume
  that $y = x$, $y' = x'$, $w \in \sB_{-\frac{1}{2m}}$,
  $M(w) + \fgg_{x,-\frac{1}{m}^+} = \sfM(\bar{w}) = \lambda$ and
  $-M'(w)+\fgg_{x',-\frac{1}{m}^+}' = -\sfM'(\bar{w}) = \lambda'$
  where $\bar{w} = w + \sB_{-\frac{1}{2m}^+} \in \sfX$.
\end{proof}

\begin{cor} \label{cor:JP} We have $\Jump(\sB) \subseteq \frac{1}{2m} +
  \frac{1}{m} \bZ$. Moreover either
  \begin{enumerate}[(i)] 
  \item $\Jump(\sL) \subseteq \frac{1}{m} \bZ$ and $\Jump(\sL')
    \subseteq \frac{1}{2m} + \frac{1}{m} \bZ$ or \item $\Jump(\sL)
    \subseteq \frac{1}{2m} + \frac{1}{m} \bZ$ and $\Jump(\sL')
    \subseteq \frac{1}{m} \bZ$.
  \end{enumerate}
\end{cor}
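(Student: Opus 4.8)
The plan is to extract numerical consequences from Proposition \ref{prop:M.w} together with the self-duality of $\sB = \sL \otimes \sL'$. First I would observe that Proposition \ref{prop:M.w} produces $\bar w \in \sfX = \sB_{-\frac{1}{2m}}/\sB_{-\frac{1}{2m}^+}$ with $\lambda = \sfM(\bar w) \neq 0$ (since $\lambda$ is a stable vector, in particular nonzero). Therefore $\sfX \neq 0$, which forces $-\frac{1}{2m} \in \Jump(\sB)$. By \Cref{lem:MB}(i) (or \eqref{eqJLL}), $\Jump(\sB) = \Jump(\sL) + \Jump(\sL')$; and by \Cref{lem:MB}(ii) the lattice function $\sB$ is self-dual, so $\Jump(\sB) = -\Jump(\sB)$, hence also $\frac{1}{2m} \in \Jump(\sB)$.

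Next I would bring in \Cref{L621}, applied to both $x$ (via $\sL$, of order $m$) and $x'$ (via $\sL'$, of order $m$): each of $\Jump(\sL)$ and $\Jump(\sL')$ is contained either in $\frac{1}{m}\bZ$ or in $\frac{1}{2m} + \frac{1}{m}\bZ$. This gives four a priori cases for the pair. Adding two sets contained in $\frac{1}{m}\bZ$ lands in $\frac{1}{m}\bZ$, which does not contain $-\frac{1}{2m}$; adding two sets each in $\frac{1}{2m}+\frac{1}{m}\bZ$ lands in $\frac{1}{m}\bZ$ as well, again excluded. So the only surviving possibilities are the two "mixed" cases, which are exactly (i) and (ii) of the statement, and in either of them $\Jump(\sL) + \Jump(\sL') \subseteq \frac{1}{2m} + \frac{1}{m}\bZ$, giving the first assertion $\Jump(\sB) \subseteq \frac{1}{2m} + \frac{1}{m}\bZ$.

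The only real point requiring care is the nonvanishing of $\sfX$: one must confirm that the $\bar w$ supplied by Proposition \ref{prop:M.w} is genuinely nonzero, which follows because $\lambda = \sfM(\bar w)$ is a stable vector in $\sfg_{\sL,-\frac{1}{m}}$ and stable vectors are in particular nonzero (the zero vector has a non-finite stabilizer). Everything else is bookkeeping with the three sets $\Jump(\sL), \Jump(\sL'), \Jump(\sB)$ under the constraints $\Jump(\sB) = \Jump(\sL)+\Jump(\sL')$, $\Jump(\sB) = -\Jump(\sB)$, $-\tfrac{1}{2m} \in \Jump(\sB)$, and the dichotomy from \Cref{L621}. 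I do not expect any serious obstacle; the main subtlety is simply making sure \Cref{L621} is applicable to both $\sL$ and $\sL'$, i.e. that $x$ and $x'$ are both of order $m$ — which is guaranteed by the discussion in \Cref{S14} and recalled at the start of \Cref{sec:bfX}.
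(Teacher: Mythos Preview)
Your proposal is correct and follows essentially the same route as the paper's proof: invoke \Cref{L621} for both $\sL$ and $\sL'$, use the nonzero $\bar w$ from \Cref{prop:M.w} to force $-\tfrac{1}{2m}\in\Jump(\sB)=\Jump(\sL)+\Jump(\sL')$, and eliminate the two non-mixed cases. The paper's version is simply terser, leaving the four-case analysis and the deduction of $\Jump(\sB)\subseteq \tfrac{1}{2m}+\tfrac{1}{m}\bZ$ implicit in the phrase ``The corollary follows.''
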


\begin{proof}
  By \Cref{L621}, $\Jump(\sL)$ (resp. $\Jump(\sL')$) is a subset of
  $\frac{1}{m} \bZ$ or $\frac{1}{2m} + \frac{1}{m} \bZ$.  From the
  proof of the last proposition, $\bar{w}$ is a nonzero element in
  $\sB_{-\frac{1}{2m}}/\sB_{-\frac{1}{2m}^+}$. In particular
  $-\frac{1}{2m} \in \Jump(\sB) = \Jump(\sL) + \Jump(\sL')$. The
  corollary follows.
\end{proof}

\subsection{} \label{sec:MVinberg} 
Reeder and Yu connect the Moy-Prasad filtration at an epipelagic point with the Kac-Vinberg grading of Lie algebras over the residue fields, as 
described previously in \Cref{sec:Kac-Vinberg}.  Now we relate this with the moment maps.
 
Let $\sLEz = t^{-1} \sLE$ and $\sLEzp = {t'}^{-1} \sLEp$ denote
the $\fooDE$-lattice functions corresponding to $x_0$ and $x_0'$ respectively as
in \Cref{sec:EP}. By \Cref{cor:JP} we are in one of the following two cases.
\begin{enumerate}[(a)]
\item We have $\Jump(\sLEz) = \fracmm\bZ$ and
  $\Jump(\sLEzp) = \fracdmm + \fracmm \bZ$. In this case we set
  $\sfV :=\sLEz_{,0:0^+}$ and $\sfV' :=\sLEzp_{,-\fracdmm:-\fracdmm^+}$.

\item We have $\Jump(\sLEz) = \fracdmm + \fracmm\bZ$ and
  $\Jump(\sLEzp) = \fracmm \bZ$. In this case we set
  $\sfV :=\sLEz_{,-\fracdmm:-\fracdmm^+}$ and $\sfV' :=\sL'^0_{E,0:0^+}$.
\end{enumerate}
Both $\sfV$ and $\sfV'$ are $\fffDE$-modules.  In Case (a), we
assign a non-degenerate Hermitian forms $\sfV$ by
$\iinn{}{}_{\sfV} := \inn{}{}_V \pmod{\fpp_{D(E)}}$ and a non-degenerate
Hermitian form on $\iinn{}{}_{\sfV'} := \inn{}{}_{V'}\varpi_E \pmod{\fpp_{D(E)}}$.
In Case (b), the bilinear forms are defined similarly.  
Thus $\sfGE = \rU(\sfV)$ (resp. $\sfGE' = \rU(\sfV')$) as $\fff_{\DE}$-linear transformations on $\sfV$ (resp. $\sfV'$)
preserving the form.

Let $\sfL^0(E)_r = \sL_{E,r:r^+}^0$.
The actions $\sigma$ and $t^{-1} \circ \sigma \circ t$ on
$\sL_{E,r}^0$ induce actions on $\sfL^0(E)_r$ which we denote by
$\vartheta$ and $\theta$ respectively. It is compatible with the
$\vartheta$ and $\theta$ actions on $\sfGE$ defined in \Cref{sec:Kac-Vinberg} in the sense that for
$g\in \sfGE$ we have
$\vartheta(g) = \vartheta \circ g \circ \vartheta^{-1}$ and
$\theta(g) = \theta \circ g\circ \theta^{-1}$ as linear
transformations on $\sfL^0(E)_r$.

Let $\sB(E) = \sB \otimes_{\foo_k} \sE$ and
$\sB^0(E) = (t^{-1}, t'^{-1}) \sB(E) = \sLEz \otimes_{\foo_{D(E)}}
\sLEzp$.
\def\sfMz{\sfM^0}
Let $\sfW^0_{-\fracdmm} = \sB^0(E)_{-\fracdmm}/\sB^0(E)_{-\fracdmm^+}$ and 
$\tsfW := \sfV \otimes_{\fffDE} \sfV' \cong \Hom_{\fffDE}(\sfV,\sfV')$.  We
observe the following diagram:
\begin{equation} \label{eq:tMM}
\vcenter{
\xymatrix@C=3em{
\tsfW \ar[d]_{\tsfM} &\ar[l]_\sim \sfW^0_{-\fracdmm} \ar[d]^{\sfMz} 
& \ar[l]
\sB^0(E)_{-\fracdmm} \ar[d]^{M} \ar[r]_{\sim}^{(t,t')}  
& 
\sB(E)_{-\fracdmm} \ar[d]^{M} \\
\sfggE & \sfg^0_{-\fracmm} \ar[l]_{\varpi_E}^\sim
& \ar[l]
\fgg(E)_{x_0,-\fracmm} 
\ar[r]_<>(.5){\sim}^<>(.5){\Ad(t)}  
& 
\fgg(E)_{x,-\fracmm}. 
}}
\end{equation}

In the above diagram, the
$\bfG(E)_{x}\times \bfG'(E)_{x'}$-equivariant map $M$ on the far right
translates to the $\sfGE \times \sfGEp$-equivariant map $\tsfM$ on the
far left.
Here $\sfMz$ is defined by \eqref{eq:MM.s} with respect to lattices $\sLEz$ and
$\sLEzp$, and  $s=-\fracdmm$.
The explicit formula for the map $\tsfM$ is exactly the same as that for $M$ with respect to the forms
$\iinn{}{}_{\sfV}$ and $\iinn{}{}_{\sfV'}$. We remark that
$\sfW^0_{-\fracdmm}$ is not equipped with any sesquilinear form.  On
the other hand $\tsfW$ has isomorphic vector space structure as
$\sfW^0_{-\fracdmm}$ but it is equipped with a tensor product form.

\subsection{The ranks of $G$ and $G'$} 
Let $r$ be the rank of $\fgg$ and let $P = P(X)$ be the coefficient of
$z^r$ in $\det(z {\mathrm{I}}_\fgg +\ad X)$. Then $P$ is an $\Ad(G)$-invariant homogeneous
rational function on $\fgg$ defined over $k$ such that the set of
regular semisimple elements in $\fgg$ is $P^{-1}(\bA-\set{0})$  where $\bA$ is the
  affine line.

  The following lemma is a consequence of \cite[Lemma~13]{Gross}.

\begin{lemma}\label{lem:reg}
  Let $\lambda$ be a stable vector in
  $\fgg_{x,-\frac{1}{m}:-\frac{1}{m}^+}$ and
  $\gamma\in \fgg_{x,-\frac{1}{m}}$ be a lifting of $\lambda$. Then
  $\gamma$ is a regular semisimple element. Let
  $\bfT = \Cent_\bfG(\gamma)$. Then $\gamma$ is a good element of depth
  $-\fracdmm$ with respect to $\bfT$, i.e. for every root $\alpha$ of
  $\bfG(\barkk)$ with respect to $\bfT(\barkk)$, $\dalpha(\gamma)$ is
  nonzero and $\val(\dalpha(\gamma)) = -\fracmm$.
\end{lemma}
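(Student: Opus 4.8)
The plan is to deduce Lemma~\ref{lem:reg} from the stability of $\lambda$ together with the dictionary between Moy-Prasad gradings and Kac-Vinberg gradings recalled in \Cref{sec:Kac-Vinberg}. First I would pass to the splitting field $E$ via the isomorphism $\iotag \colon \sfggE^{\theta,\zeta} \iso \sfg_{x,-\fracmm}$ of \Cref{sec:Kac-Vinberg}(c), which matches stable vectors on both sides. So it suffices to prove the analogous statement for a stable vector $\bar\gamma$ in the eigenspace $\sfggE^{\theta,\zeta}$ of the graded Lie algebra $\sfggE = \bigoplus_j \sfggE^{\theta,\zeta^{-j}}$, namely that any lift $\gamma$ is regular semisimple and that its depth with respect to its centralizer torus is exactly $-\fracmm$. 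For the graded statement I would invoke \cite[Lemma~13]{Gross} (or the corresponding results of Vinberg/Levy cited at the end of \Cref{sec:Kac-Vinberg}): a stable vector in a Vinberg $\bZ/m$-graded Lie algebra is semisimple (closed orbit, finite stabilizer modulo center forces regular semisimple in the ambient $\sfggE$), and its centralizer is a maximal torus meeting each graded piece in the expected way.

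The key steps, in order, are: (1) transport the stability hypothesis to $\sfggE^{\theta,\zeta}$ using $\iotag$; (2) apply Gross's Lemma~13 to conclude $\bar\gamma$ (hence $\gamma$ after lifting) is regular semisimple, with $\bfT := \Cent_\bfG(\gamma)$ a maximal torus; (3) translate the grading statement back through the isomorphisms in \Cref{sec:Kac-Vinberg}: the element $\gamma$ lies in $\fgg(E)_{x,-\fracmm}$ but, crucially, its image in each graded piece $\fgg(E)_{x,\frac{j}{m}}$ records exactly which roots $\alpha$ of $\bfT$ have $\val(\dalpha(\gamma)) = \frac{j}{m}$; since $\bar\gamma$ generates the $\zeta$-eigenspace and the grading on $\fgg$ restricted to $\ftt = \Lie\bfT$ is determined by $\gamma$ being homogeneous of degree $-1$, every root $\alpha$ must satisfy $\dalpha(\gamma) \neq 0$ and $\val(\dalpha(\gamma)) = -\fracmm$, which is precisely the assertion that $\gamma$ is a good element of depth $-\fracdmm$ with respect to $\bfT$. (I note the slight notational mismatch in the statement between ``depth $-\fracdmm$'' and $\val(\dalpha(\gamma)) = -\fracmm$; the depth of the coset is $-\fracdmm = -\frac{1}{2m}$ in the Moy-Prasad normalization for the group while the root-value normalization gives $-\fracmm$, and the proof should just carry the relevant normalization explicitly.)

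The main obstacle I anticipate is step (3): making precise the claim that \emph{every} root $\alpha$ of $\bfG(\barkk)$ with respect to $\bfT(\barkk)$ satisfies $\val(\dalpha(\gamma)) = -\fracmm$, as opposed to merely ``some root'' or ``the generic root''. This is where one genuinely uses that $\gamma$ is a lift of a \emph{stable} vector and not just any regular semisimple element of that depth: stability forces the $\theta$-grading on $\ftt$ to have no zero part other than the center (the stabilizer being finite modulo center), so $\ad\gamma$ acts on each root space $\fgg_\alpha$ by a scalar of valuation exactly $-\fracmm$; equivalently, $\gamma$ lies in the \emph{open} stratum of graded regular semisimple elements, and the Kac coordinates of $x$ being those of an epipelagic point of order $m$ pin down all the root-valuations simultaneously. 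I would spell this out using the eigenspace decomposition $\sfggE = \bigoplus_j \sfggE^{\theta,\zeta^{-j}}$ and the fact that $\Cent_{\sfggE}(\bar\gamma) \cap \sfggE^{\theta,\mathrm{triv}}$ is (modulo center) zero by finiteness of the stabilizer, together with \cite[Proposition~4.1]{AR} to identify the pairing. Once the ``for every root'' statement is in hand, the good-element claim is immediate from \Cref{def:epipoint} and the definition of a good element in the tame theory.
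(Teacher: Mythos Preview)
Your steps (1) and (2) match the paper's approach: transport to $\sfggE$ via $f := \varpi_E\,\Ad(t)$ and use that a stable vector in a Vinberg graded piece is regular semisimple in the ambient $\sfggE$; the paper phrases this via the discriminant polynomial $P$, observing $[P(f(\gamma))] \neq 0$ in $\fff_E$ and hence $P(\gamma)\neq 0$.

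Your step (3), however, is where there is a genuine gap. The sentences about ``$\gamma$'s image in each graded piece records which roots have $\val(\dalpha(\gamma)) = \frac{j}{m}$'' and ``$\bar\gamma$ generates the $\zeta$-eigenspace'' do not parse as written, and the proposed detour through finite stabilizers, Kac coordinates, and the $\theta$-grading on $\ftt$ is both vague and unnecessary. The idea you are missing is much simpler and is already contained in step (2): once $\barff(\lambda)$ is \emph{regular semisimple in $\sfggE$ over the residue field}, then by the very definition of regular semisimple, for \emph{every} root $\overline{\alpha'}$ of $\sfggE$ with respect to (the image of) the centralizer torus one has $\overline{d\alpha'}(\barff(\lambda)) \neq 0$ in $\bfff$. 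Lifting, $d\alpha'(f(\gamma))$ has valuation exactly $0$. Now simply unwind the twist: with $\bfT = \Cent_\bfG(\gamma)$, $T' := \Ad(t)T$, and $\alpha' := \alpha \circ \Ad(t^{-1})$, one has $d\alpha'(f(\gamma)) = \varpi_E\, d\alpha(\gamma)$, whence $\val(d\alpha(\gamma)) = -\fracmm$ for every root $\alpha$. No further input from stability (beyond ``regular semisimple over the residue field'') is needed; this is exactly the paper's argument. As for your parenthetical about $-\fracdmm$ versus $-\fracmm$: the ``depth $-\fracdmm$'' in the statement is simply a typo for $-\fracmm$; there is no second normalization in play.
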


\begin{proof}
  Let $f(X) = \varpi_E \Ad(t)(X)$ and let $\barff$ be the induced map
  in the following diagram:
\[
\xymatrix@R=1em{
 \gamma \ar@{|->}[d] \ar@{}[r]|<>(.5){\in}&\fgg_{x,-\fracmm}\ar[d]
 \ar@{}[r]|*{\subset} &\fgg(E)_{x,-\fracmm} \ar@{->>}[d] \ar[r]^{f} &\fgg(E)_{x_0,0} \ar@{->>}[d]\\
\lambda \ar@{}[r]|<>(.5){\in}&\fgg_{x,-\frac{1}{m};-\frac{1}{m}^+}\ar@{}[r]|*{\subset}&\sfg_{x,-\fracmm}(E) \ar[r]^<>(.5){\barff} & \sfg_{x_0}(E)\makebox[0em][l]{$=\sfggE$.}
}
\] 
Since $\lambda$ is a stable vector, $\barff(\lambda)$ is a regular
semisimple element in $\sfggE$ and
$[P(f(\gamma))]\neq 0 \in \fff_E$. Therefore, $P(\gamma) \neq 0$ and
$\gamma$ is a regular semisimple element in $\fgg$.

Next we prove that $\gamma$ is good. Let
$R(\bfG(\barkk),\bfT(\barkk))$ be the set of roots. Let
$T'(\barkk) := \Ad(t)T(\barkk)$.  Then
$\alpha \mapsto \alpha' := \alpha\circ \Ad(t^{-1})$ gives a map
$R(\bfG(\barkk),\bfT(\barkk)) \rightarrow
R(\bfG(\barkk),\bfT'(\barkk))$.
Moreover $\alpha'$ reduces to a root $\overline{\alpha'}\in \sfggE$.
For any $\alpha \in R(\bfG(\barkk),\bfT(\barkk))$,
$\overline{d\alpha'}([f(\gamma)]) \neq 0$ since
$[f(\gamma)] = \bar{f}(\lambda)$ is regular semisimple in
$\sfggE$. This implies that $\val(\dalpha(\gamma)) = -\fracmm$ which
proves the lemma.
\end{proof}

\begin{prop}\label{prop:RSP}
Suppose $\theta(\piSigma) =\piSigmap$. 
Then $(\bfG,\bfG')$ or $(\bfG',\bfG)$ is one of the following types:
\begin{inparaenum}[(i)]
\item $(\rD_n,\rC_n)$, 
\item $(\rC_n,\rD_{n+1})$,
\item $(\rC_n,\rB_n)$,
\item $(\rA_n,\rA_n)$, 
\item $(\rA_n,\rA_{n+1})$.
\end{inparaenum}
\end{prop}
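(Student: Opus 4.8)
The strategy is to combine the constraint coming from Condition~\eqref{cnd:M} (via \Cref{prop:M.w}) with the rank/root-system information about $\gamma$ in \Cref{lem:reg}, and then run through the finite list of type~I dual pairs. The key observation is that by \Cref{prop:M.w} there is a $\bar w\in\sfX$ with $\lambda=\sfM(\bar w)$ and $\lambda'=-\sfM'(\bar w)$; lifting $\bar w$ to $w\in\sB_{-1/2m}$ and applying \Cref{lem:MPsi} and \Cref{lem:MB}(\ref{lem:MB.M}), one sees that $\gamma:=M(w)=w^\star w$ and $\gamma':=-M'(w)=-ww^\star$ are liftings of the stable vectors $\lambda,\lambda'$. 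So we are reduced to a purely linear-algebra statement: if $w\in\Hom_D(V,V')$ is such that $w^\star w$ is regular semisimple in $\fuu(V)$ and $ww^\star$ is regular semisimple in $\fuu(V')$, then the pair $(\rank V,\rank V')$ is tightly constrained, and after recording which of $D,\epsilon,\epsilon'$ occur, one gets exactly the list~(i)--(v).

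The key step is the comparison of ranks. The operators $w^\star w$ and $ww^\star$ have the same nonzero eigenvalues (with the same multiplicities) over $\bar k$, since $w^\star w$ and $ww^\star$ are conjugate on the image of $w^\star$ resp.\ $w$; regular semisimplicity of $w^\star w$ on $V$ forces $w$ to have full rank on one side, i.e.\ $w$ is injective or surjective, and then regular semisimplicity of $ww^\star$ forces the kernel/cokernel to be at most one-dimensional (over $D$), because $\gamma'$ regular semisimple means its centralizer is a torus, hence $\gamma'$ can have the eigenvalue $0$ with multiplicity at most the rank of that torus --- concretely, $\dim_{\bar k}\ker(ww^\star)$ is bounded by the multiplicity that a regular semisimple element of $\fuu(V')$ can afford at $0$, which is $0$ or $1$ depending on the type (symplectic/orthogonal/unitary). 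So up to swapping $(G,G')$ we get $\dim_D V' - \dim_D V \in\{0,1\}$, and similarly the forms must be compatible: $\epsilon'=-\epsilon$ is already built in, and $D$ is $k$, a quadratic extension, or the quaternions. Running through these: $D=k$ with $(\epsilon,\epsilon')=(+,-)$ or $(-,+)$ gives orthogonal--symplectic pairs with the rank offsets producing $(\rD_n,\rC_n)$, $(\rC_n,\rD_{n+1})$, $(\rC_n,\rB_n)$ (the odd/even orthogonal distinction coming from whether $\dim V$ is even or odd, which is further pinned down by the good-element condition of \Cref{lem:reg} --- an odd orthogonal group in the ``wrong'' slot would force $\gamma$ to have a zero eigenvalue, contradicting regularity); $D$ a quadratic extension gives unitary--unitary pairs $(\rA_n,\rA_n)$ or $(\rA_n,\rA_{n+1})$; and $D$ quaternionic is excluded because \Cref{L621} combined with \Cref{cor:JP} forces $\Jump(\sB)\subseteq\frac1{2m}+\frac1m\bZ$ and the base-change analysis in \Cref{sec:EP} shows the resulting form on $\sfV$ or $\sfV'$ cannot simultaneously be self-dual of the required parity --- equivalently, one checks the Kac--Vinberg grading constraint rules out the $\frako$-type inner forms that quaternionic $D$ would produce.

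The main obstacle I expect is the bookkeeping in the last step: translating the eigenvalue-multiplicity bound at $0$ into the precise arithmetic of ranks for each of the three choices of $D$ and two choices of sign, while simultaneously using \Cref{lem:reg} to exclude the degenerate possibilities (e.g.\ ruling out $(\rB_n,\rB_n)$-type or $(\rD_n,\rD_n)$-type self-pairings, and ruling out quaternionic $D$). The cleanest way to organize this is probably to note first that \Cref{cor:JP} already tells us one of $\sL,\sL'$ has jumps in $\frac1m\bZ$ and the other in $\frac1{2m}+\frac1m\bZ$ --- this asymmetry is what produces the rank offset of exactly $0$ or $1$ --- and then to read off the type of each group from the residual form in \Cref{lem:G0}, checking in each case that regular semisimplicity of $\gamma$ and $\gamma'$ is compatible with the dimension and parity. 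The quaternionic exclusion should follow because in that case $D(E)\cong\Mat_2(E)$ and the relevant residual group is a product of two copies acting diagonally, which cannot carry a stable vector of the required form for the small ranks forced by the offset condition.
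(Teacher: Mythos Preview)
Your core strategy---comparing the nonzero eigenvalues of $w^\star w$ and $ww^\star$ over $\barkk$ and bounding the zero-eigenvalue multiplicity allowed by regular semisimplicity in each classical Lie algebra---is correct and more elementary than the paper's argument. The paper instead base-changes to $\barkk$, invokes the fundamental theorems of classical invariant theory to identify $W/(G\times G')\cong\fgg/G$, constructs an explicit torus-stable slice $\AAA\subset W$ (their \Cref{LJ1}) yielding the natural inclusion $\Upsilon_\fyy\colon\fyy\hookrightarrow\fyy'$, and then checks the discriminant polynomials $P|_\fyy$ and $P'|_{\fyy'}\circ\Upsilon_\fyy$ by inspection. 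Your route bypasses the slice: the characteristic-polynomial identity for $w^\star w$ versus $ww^\star$ gives $\mathrm{mult}_0(w^\star w)-\mathrm{mult}_0(ww^\star)=\dim_{\barkk}V-\dim_{\barkk}V'$, and the possible values of $\mathrm{mult}_0$ for a regular semisimple element ($0$ for $\Sp_{2n}$; exactly $1$ for $\rO_{2n+1}$; $0$ or $2$ for $\rO_{2n}$; $0$ or $1$ for $\GL_n$) pin down the list directly.

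There is, however, a genuine error in your treatment of the quaternionic case. You claim that quaternionic $D$ must be \emph{excluded}, and sketch a building/grading justification. But the proposition is about the root-system type of $(\bfG,\bfG')$, not about $D$. When $D$ is the quaternion algebra, \Cref{lem:TBC}(iii) shows that over a splitting field the pair becomes a split $(\Sp_{2n},\rO_{2n'})$ pair (quaternionic hermitian has type $\rC_n$, quaternionic skew-hermitian has type $\rD_{n'}$, both even-dimensional on the $\breve V$ side). This is exactly the paper's case~(a) after base change, and your own eigenvalue argument then applies without modification, placing the pair in type~(i) or~(ii). So the quaternionic case is not excluded; it is subsumed. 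The exclusion argument you sketch is both unnecessary and incorrect.

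A smaller point: the intermediate claim ``regular semisimplicity of $w^\star w$ forces $w$ to be injective or surjective'' is false as stated (e.g.\ for $(\GL_3,\GL_3)$ one may take $\mathrm{rank}\,w=2$ with both $w^\star w$ and $ww^\star$ regular semisimple) and is not needed; the zero-multiplicity comparison above is the clean replacement.
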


\begin{proof}
  By \Cref{prop:M.w} there exists $w \in \sB_{-\fracdmm} \subset W$ such that
  $[M(w)] = \sfM(\bar{w}) \in \fgg_{x,-\frac{1}{m}:-\frac{1}{m}^+}$
  and
  $[M'(w)] = \sfM'(\bar{w}) \in
  \fgg'_{x',-\frac{1}{m}:-\frac{1}{m}^+}$
  are stable vectors. By \Cref{lem:reg}, both $M(w)$ and $M'(w)$ are
  regular semisimple elements.

  Now we show that (i) to (v) list all possible cases which satisfy
  the following condition.
  \begin{equation}\label{equ17}
    \text{There is a
      $w\in W$ such that $M(w)$ and $M'(w)$ are both regular semisimple.} 
  \end{equation}

  We may base change to the algebraic closure $\barkk$ so that $G = \bfG(\barkk)$
  and $G' = \bfG'(\barkk)$ are split groups. We fix a maximal (split) torus $Y$
  and identify its Lie algebra $\fyy$ with $\bA^{\dim\fyy}$. We list
  $P|_\fyy = \prod_{\alpha\in \Phi(G,Y)}d\alpha$ explicitly when
  $G$ is one the following groups.
  \begin{description}
  \item[$\GL(n)$] $\fyy = \bA^n$,
    $P(a_1,\cdots, a_n) = \prod_{i \neq j} (a_i-a_j)$,
  \item[$\Sp(2n)$] $\fyy = \bA^n$,
    $P(a_1,\cdots, a_n) = \prod_{i\neq j}(a_i^2- a_j^2)\prod_j
    (-4a_j^2)$,
  \item[$\rO(2n)$] $\fyy = \bA^n$, 
    $P(a_1,\cdots, a_n) = \prod_{i\neq j}(a_i^2-a_j^2)$,
  \item[$\rO(2n+1)$] $\fyy = \bA^n$, 
    $P(a_1,\cdots, a_n) = \prod_{i\neq j}(a_i^2-a_j^2)\prod_j (-a_j^2)$.
  \end{description}

  By the classification of dual pairs, we only have to consider one
  of the following reductive dual pairs:
  \begin{enumerate}[(a)]
  \item $(\rO(2n),\Sp(2n'))$,
  \item $(\Sp(2n), \rO(2n'+1))$ and
  \item $(\GL(n),\GL(n'))$.
  \end{enumerate}
  We may assume that
  ${\mathrm{rank}} \, G \leq {\mathrm{rank}} \, G'$.
  Let $\Wy$ and $\Wy'$ be the Weyl groups of $G$ and $G'$ 
    with respect to $Y$ and $Y'$
  respectively.  By the first and second fundamental theorems of classical invariant theory $M$ induces an isomorphism  $W/G \cong \fgg$ (see \cite{Howe95, GW}) which in turn induces an isomorphism $W/G \times G' \cong \fgg/G'$.  We get following
  diagram:
  \[
  \xymatrix{
  \fyy \ar@{^(->}[r] \ar[d] & \fgg \ar[d]& \ar@{->>}[l]_{M} W \ar[r]^{M'} \ar[d]& \fgg'\ar[d] &
  \fyy' \ar@{_(->}[l] \ar[d] \\
  \fyy/\Wy \ar[r]^{\sim}&  \fgg/G & \ar[l]_<>(.5){\sim} W/ G \times G'
  \ar[r]& \fgg'/G' &\ar@{_(->}[l]_{\sim} \fyy'/\Wy'. 
  }
  \]

  We remind the readers that in the lemma below all the vector spaces
  and algebraic groups are defined over $\barkk$.

  \begin{lemma} \label{LJ1} Suppose
    ${\mathrm{rank}} \, G \leq {\mathrm{rank}} \, G'$.
    Then there is a vector subspace $\AAA$ of $W$ which is stable under
    the action of $Y \times Y'$ and such that the following
    diagram below commutes.
\begin{equation} \label{eq:cd}
\vcenter{
\xymatrix{ \fyy \ar@{->>}[d] & & \AAA / Y \times Y'
  \ar@{_(->>}[ll]_<>(.5){\sim}
  \ar@{^(->}[rr]  \ar@{->>}[d] & & \fyy' \ar@{->>}[d] \\
  \fyy / \Wy \ar[r]^{\sim} & \fgg/ G & W/ G \times G'
  \ar@{_(->}[l]_<>(.5){\sim} \ar@{^(->}[r] &
  \fgg/ G' & \fyy'/\Wy' \ar[l]_{\sim}.}
}
\end{equation}
\end{lemma}

The proof of the lemma is given in \Cref{app:M0}.

\medskip

The top row of \eqref{eq:cd} defines an inclusion map
$\Upsilon_\fyy \colon \fyy\hookrightarrow
\fyy'$.
This map is the natural inclusion
$\bA^{\rank G} \hookrightarrow \bA^{\rank G'}$ which is well known to
the experts (for example see~\cite{Ad}).

Using the bottom row in \eqref{eq:cd}, we have an inclusion
$\fyy/\Wy \hookrightarrow \fyy'/\Wy'$ induced by $\Upsilon_\fyy$.  Let
$P$ and $P'$ be the invariant polynomials for $G$ and $G'$ defined
before \Cref{lem:reg}. Then \eqref{equ17} is equivalent to the
following statement:
\[
\text{There is an $X\in \fyy$ such that $P|_\fyy(X) \neq 0$ and
  $P'|_{\fyy'}(\Upsilon_\fyy(X)) \neq 0$.}
  \]
 It follows by inspection that (i) to (v) are all the possible cases. 
\end{proof}

\section{Theta correspondences II} \label{sec:thetaII} 
In this section  we study theta correspondences of epipelagic representations. 
By \Cref{prop:RSP}, \Cref{prop:M.w} and \Cref{cor:JP}, it is enough to consider the following situations:
\begin{enumerate}[(C1)]
\item  \label{it:C0} The dual pair $(\bfG,\bfG')$ is one of the
  following types:
\begin{inparaenum}[(i)]
\item $(\rD_n,\rC_n)$, 
\item $(\rC_n,\rD_{n+1})$,
\item $(\rC_n,\rB_n)$,
\item $(\rA_n,\rA_n)$, 
\item $(\rA_n,\rA_{n+1})$.
\end{inparaenum}

\item \label{it:C1} The points $x \in \cB(\bfG,k)$ and $x' \in \cB(\bfG',k)$
  are epipelagic points of order $m$. In particular, $m\geq 2$. Let
  $\sL$ and~$\sL'$ denote the corresponding $\foo_D$-lattice
  functions.

\item \label{it:C2} $\sB = \sL \otimes \sL'$ where
  $\Jump(\sB) \subseteq \frac{1}{2m} + \frac{1}{m} \bZ$.

\item \label{it:C3} There exists a
  $\bw \in \sfX = \sB_{-\fracdmm} / \sB_{\fracdmm}$ such that
  $\sfM(\bw) = \lambda \in \sfg_{x,-\fracmm}$ and
  $-\sfM'(\bw) = \lambda' \in \sfg'_{x',-\fracmm}$ are stable vectors.
\end{enumerate}

\subsection{} 
We now study the geometry of $\sfX$ and the moment maps which will
eventually determine the local theta correspondences.

By (C4) $\lambda$ is a stable vector so
$\tlambda := \iotag^{-1}(\lambda)$ is a regular semisimple element in
$\sfggE$ (c.f. \Cref{sec:EP}).  When
$\tlambda\in \sfggE\subseteq \End_{\fff_\DE}(\sfV)$ is not of full
rank, it requires a special treatment. By (C1) and the classification
of epipelagic points in \cite{RY,Gross}, this is exactly in the
following situation:
\begin{equation}
  \label[case]{case:E} \tag{E}  \parbox[c][][c]{0.8\textwidth}{ 
    $D$ is a ramified quadratic extension of $k$, the dual pair
    $(G, G')$ is a pair of unitary groups of the same rank $n$, and
    $\tlambda \in \sfggE$ has rank $n-1$.}
\end{equation}

\def\bSw{{\bS_\bw}}
\def\bSl{{\bS_\lambda}}

Define 
\begin{align*}
\sfXll & := \sfM^{-1}(\lambda)\cap \sfM'^{-1}(-\lambda') \text{ and } \\
\sfSw & := \Stab_{\sfSl \times \sfSlp}(\bw).  
\end{align*}
We note that $\sfS_\lambda$ is abelian in all our cases so all its
irreducible representations are one dimensional characters.

\begin{lemma} \label{lem:Xll}
\begin{asparaenum}[(i)]
\item \label{lem:Xll.1} The set $\sfXll$
  is the $\sfSl$-orbit of $\bw$ in $\sfX$.

\item There is a group homomorphism $\alpha\colon \sfSlp \rightarrow \sfSl$ such
  that $\trSp := \set{ (\alpha(g'),g') | g' \in \sfSlp }$ is a subgroup of
  $\sfS_{\bw}$.

\item If we are not in \Cref{case:E}, then $\sfSl$ acts freely on
  $\sfXll$ and $\sfS_{\bw} = \trSp$.

\item In \Cref{case:E}, let $\bSw = \Stab_{\sfSl}(\bw)$ and
  $\bSl := \set{g\in \sfSl| g\circ \lambda = \lambda }$\footnote{Let
    $h \in G_x$ and $\gamma\in \fgg_{x,-\fracmm}$ be any lifts of $g$ and
    $\lambda$ respectively. We consider $h$ and $\gamma$ as elements in $\Hom_k(V,V)$. 
		Then $g\circ \lambda := h\circ \gamma + \fgg_{x,-\fracmm^+}\in
    \fgg_{x,-\fracmm:-\fracmm^+}$
    is well defined.}. Then $\bSw = \bSl$ and so the character $\chi$
  of $\sfS_{\lambda}$ occurs in $\bC[\sfXll]$ if and only if
  $\chi|_{\bSl}$ is trivial.
\end{asparaenum}
\end{lemma}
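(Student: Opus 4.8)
The plan is to transport the whole statement to linear algebra over the residue field and then exploit that $\lambda$ and $\lambda'$ are regular semisimple. First I would use the commutative diagram \eqref{eq:tMM} to replace $(\sfX,\sfM,\sfM')$ by $(\tsfW,\tsfM,\tsfMp)$, where $\tsfW=\Hom_{\fffDE}(\sfV,\sfV')$ and $\tsfM(w)=w^\star w$, $\tsfMp(w)=ww^\star$ are formed from $\iinn{}{}_{\sfV}$ and $\iinn{}{}_{\sfV'}$, and to replace $\sfSl,\sfSlp$ by the stabilizers of $\tlambda=\iotag^{-1}(\lambda)$ and $\tlambdap$ inside $\sfGE^\theta=\rU(\sfV)^\theta$ and $\sfGEp^\theta=\rU(\sfV')^\theta$ via the isomorphism of \Cref{sec:Kac-Vinberg}(b). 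Every arrow in \eqref{eq:tMM} is equivariant for the relevant Galois action, so it is enough to prove (i)--(iv) over an algebraic closure $\overline{\fff}$ for the full centralizer tori and then descend through the $\theta$- and Frobenius-actions; accordingly I would argue below over a field, writing $\bw,\lambda,\lambda'$ for the images of $\bw,\tlambda,\tlambdap$, with $\bw^\star\bw=\lambda$ and $\bw\bw^\star=-\lambda'$ by \Cref{prop:M.w}, both regular semisimple by \Cref{lem:reg}.

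For (i): from $\bw^\star\bw=\lambda$ and $\bw\bw^\star=-\lambda'$ one gets $\bw\lambda=\bw\bw^\star\bw=-\lambda'\bw$, so $\bw$ intertwines $\lambda$ with $-\lambda'$; since both are regular semisimple this forces $\bw$ to annihilate $\ker\lambda$, to carry the $c$-eigenline of $\lambda$ isomorphically onto the $c$-eigenline of $-\lambda'$ for each $c\neq0$, and hence $\ker\bw=\ker\lambda$ and $\Im\bw=\Im(\bw\bw^\star)=(\ker\lambda')^{\perp}$. Given $\bw_1,\bw_2\in\sfXll$, both then have kernel $\ker\lambda$ and both restrict to bijections $(\ker\lambda)^{\perp}\to(\ker\lambda')^{\perp}$ satisfying $\iinn{\bw_i u}{\bw_i v}_{\sfV'}=\iinn{u}{\lambda v}_{\sfV}$ independently of $i$; hence the element $g$ defined by $g^{-1}|_{(\ker\lambda)^{\perp}}:=\bw_1|_{(\ker\lambda)^{\perp}}^{-1}\circ\bw_2|_{(\ker\lambda)^{\perp}}$ and $g^{-1}|_{\ker\lambda}:=\id$ preserves $\iinn{}{}_{\sfV}$, is scalar on every eigenline of $\lambda$ hence commutes with $\lambda$, so lies in $\sfSl$, and satisfies $\bw_1 g^{-1}=\bw_2$. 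Together with the evident inclusion of the $\sfSl$-orbit of $\bw$ into $\sfXll$ this proves (i).

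For (ii): left translation $\bw\mapsto g'\bw$ by $g'\in\sfSlp$ preserves $\sfXll$, since $\tsfM(g'\bw)=\bw^\star(g')^{-1}g'\bw=\lambda$ and $\tsfMp(g'\bw)=g'(\bw\bw^\star)(g')^{-1}=\Ad(g')(-\lambda')=-\lambda'$. I would then define $\alpha(g')$ to be the identity on $\ker\bw=\ker\lambda$ and to equal $\bw|_{(\ker\bw)^{\perp}}^{-1}\circ g'|_{\Im\bw}\circ\bw|_{(\ker\bw)^{\perp}}$ on $(\ker\bw)^{\perp}$; this is well defined because $g'$ preserves $\Im\bw=(\ker\lambda')^{\perp}$ (it commutes with $\lambda'$) and $\bw$ restricts to a bijection $(\ker\bw)^{\perp}\to\Im\bw$. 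Using $\bw\lambda=-\lambda'\bw$ and $\iinn{\bw u}{\bw v}_{\sfV'}=\iinn{u}{\lambda v}_{\sfV}$ one checks $\alpha(g')\in\sfSl$ and $g'\bw=\bw\,\alpha(g')$, i.e. $(\alpha(g'),g')\in\sfSw$; and $\alpha$ is a group homomorphism because $g'\mapsto g'|_{\Im\bw}$ is and conjugation by $\bw|_{(\ker\bw)^{\perp}}$ is. For (iii), outside \Cref{case:E} the element $\lambda$ is invertible, so $\ker\bw=\ker\lambda=0$, $\bw$ is injective, $\bw g^{-1}=\bw$ forces $g=1$, and $\sfSl$ acts freely on $\sfXll$; moreover if $(g,g')\in\sfSw$ then $g'\bw=\bw g$ and $g'\bw=\bw\,\alpha(g')$, whence $g=\alpha(g')$ by injectivity of $\bw$, so $\sfSw=\trSp$. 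For (iv), in \Cref{case:E} the condition $\bw g^{-1}=\bw$ is equivalent to $\Im(g-1)=\Im(g^{-1}-1)\subseteq\ker\bw=\ker\lambda$, and since $g\in\sfSl$ preserves the eigendecomposition of $\lambda$ this holds iff $g$ is the identity on $(\ker\lambda)^{\perp}=\Im\lambda$, iff $(g-1)\lambda=0$, iff $g\in\bSl$; thus $\bSw=\bSl$. Finally (i) gives $\sfXll\cong\sfSl/\bSw$ as $\sfSl$-sets, so $\bC[\sfXll]\cong\Ind_{\bSw}^{\sfSl}\bbone$, and Frobenius reciprocity yields that a character $\chi$ of $\sfSl$ occurs in $\bC[\sfXll]$ if and only if $\chi|_{\bSw}=\chi|_{\bSl}$ is trivial.

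The main work, and the place where care is needed, is (i): carrying out the eigenspace analysis over $\overline{\fff}$ even when $\fffDE$ is only a product of fields (or a matrix algebra), keeping track of which symmetry type — $\epsilon$-Hermitian, orthogonal, or symplectic — the regular semisimple $\lambda$ imposes on $\ker\lambda$ and on its orthogonal complement so that the operators produced are genuinely unitary, and then descending through the $\theta$- and Frobenius-actions so that $g$ and $\alpha(g')$ land in the rational groups $\sfSl$, $\sfSlp$ rather than in tori over $\overline{\fff}$. This is also where the rank restriction of \Cref{prop:RSP} enters: it guarantees that $\bw$ has the rank used above and that $\ker\lambda$ is at most a line — a line precisely in \Cref{case:E} — which is exactly what makes the dichotomy between (iii) and (iv) clean. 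Parts (ii), (iii) and (iv) are then short formal consequences of (i).
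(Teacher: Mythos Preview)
Your approach is correct but takes a genuinely different route from the paper's. The paper's engine is Witt's theorem, not eigenspace analysis: for (i) it observes that $\tsfM'^{-1}(\tlambdap)$ is a single free $\sfGE$-orbit by Witt, so the $g$ carrying $\tww$ to $\tww'$ is \emph{unique}, and then rationality (the descent you flag in your last paragraph) falls out in one line from $\sigma(g)\cdot\tww=\sigma(\tww')=\tww'=g\cdot\tww$; in \Cref{case:E} the paper switches to the explicit matrix model $\tsfW=M_{nn}(\bfff)\oplus M_{nn}(\bfff)$ and uses the free action of the $\mathsf{SL}$-stabilizer instead. For (ii) the paper does something quite unlike you: it lifts $\bw$ to a full-rank $w\in\sB_{-\fracdmm}$ over $k$, uses that the centralizers $\bfSl=\Cent_G(M(w))$ and $\bfSlp=\Cent_{G'}(M'(w))$ are anisotropic tori (hence sit inside $G_\sL$, $G'_{\sL'}$), defines the homomorphism $\talpha\colon\bfSlp\to\bfSl$ over $k$ again via Witt's theorem, and then reduces modulo $\fpp$ using that $\bfSl\twoheadrightarrow\sfSl$ and $\bfSlp\twoheadrightarrow\sfSlp$ are surjective because $\Gamma,\Gamma'$ are good elements.

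Your route via the intertwining relation $\bw\lambda=-\lambda'\bw$ and direct construction of $g$ and $\alpha(g')$ on eigenlines is more elementary and has the virtue of staying entirely at the residue-field level, avoiding the detour through characteristic zero and the appeal to anisotropy and goodness; the paper's route is more structural and makes the rationality step automatic, whereas you still owe the (easy) check that your normalization $g|_{\ker\lambda}=\id$ is Galois-stable. One small point to tighten: your sentence ``preserves $\iinn{}{}_\sfV$'' for the $g$ in (i) only follows once you have also used that $g$ commutes with $\lambda$ (which you state a clause later), since $\iinn{\bw_i u}{\bw_i v}_{\sfV'}=\iinn{u}{\lambda v}_\sfV$ only gives preservation of the $\lambda$-twisted form directly.
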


The proof is given in \Cref{app:Xll}.

\medskip

\noindent \remark\ 
\begin{inparaenum}
\item The homomorphism $\alpha$ induces a map
  $\alpha^* \colon \widehat{\sfS_{\lambda}} \rightarrow
  \widehat{\sfS_{\lambda'}'}$
  given by $\alpha^*(\chi) = \chi \circ \alpha$. The definition
  depends on the choice of $\bw\in \sfXll$. On the other hand, it is
  well-defined up to conjugation by the proof in \Cref{app:Xll}. Hence
  the map between the Grothendieck groups induced by $\alpha^*$ is
  independent of the choice of $\bw$.

\item In the exceptional \Cref{case:E}, \Cref{lem:Xll}~(iv) will lead to the
  fact that not all epipelagic representations can occur in this local theta
  correspondence.  The extreme case is the well known fact that not all
  characters of $\rU(1)$ occur in the oscillator representation
  of~$\Mp(2)$(see~\cite{Moen93}).
\end{inparaenum}

\begin{thm}\label{thm:main0}
Suppose (C1) to  (C4) hold.
For any
  character $\chi$ of $\sfS_{\lambda}$, let $\Sigma = (x,\lambda, \chi)$ and 
  $\Sigma' = (x',\lambda', \chi')$ where $\chi' = \chi^*\circ \alpha$ and 
  $\chi^*$ is the contragredient representation of $\chi$. 
\begin{enumerate}[(i)]
\item Suppose we are not in the exceptional \cref{case:E}. Then 
\begin{equation} \label{eq:CEpi}
\theta( \piSigma) = \piSigmap.
\end{equation}
In particular the theta lift is nonzero.

\item Suppose we are in the exceptional \Cref{case:E}. Then \eqref{eq:CEpi}
  holds for $\chi \in \widehat{\sfSl}$ such that $\chi|_{\bSl}$ is
  trivial. 
\end{enumerate}
\end{thm}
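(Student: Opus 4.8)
The plan is to prove the nonvanishing of the multiplicity space $\Hom_{\tG\times\tG'}(\omega,\piSigma\boxtimes\piSigmap)$ for the asserted characters $\chi$, and then to conclude from the fact that the theta lift of a supercuspidal representation is irreducible or zero. We realize $\omega$ in the lattice model $\sS(\sB_0)$. By hypothesis (C3) the jumps of $\sB=\sL\otimes\sL'$ lie in $\fracdmm+\fracmm\bZ$, so $0\notin\Jump(\sB)$, the lattice $\sB_0=\sB_{0^+}$ is self-dual in $W$, and $G_x\times G'_{x'}$ fixes it; by \Cref{prop:liftSpA} the section $\omega_0$ is a genuine homomorphism on $\Sp_{\sB_0}\supseteq G_x\cup G'_{x'}$, so $G_x$ and $G'_{x'}$ act on $\sS(\sB_0)$ through the geometric formula \eqref{eq:LA} with no cocycle, and the metaplectic $\bC^\times$ acts by $\rid_{\bC^\times}$. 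Since $\piSigma=\ind_{H_{x,\lambda}\times\bC^\times}^{\tG}((\psi_\lambda\otimes\chi)\boxtimes\rid)$ with $H_{x,\lambda}\subseteq G_x$ and $\Jx=G_{x,0^+}$ (and likewise with primes), taking contragredients and using that compact induction is left adjoint to restriction identifies the multiplicity space with the space of vectors $v$ in $\omega^\vee\cong\overline{\sS(\sB_0)}$ on which $\Jx\times G'_{x',\fracmm}$ acts by $\psi_\lambda^{-1}\boxtimes\psi_{\lambda'}^{-1}$ and $\sfSl\times\sfSlp$ acts by $\chi^{-1}\boxtimes\chi'^{-1}$.

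To produce such a $v$, use hypothesis (C4) to fix $\bw\in\sfX$ with $\sfM(\bw)=\lambda$, $-\sfM'(\bw)=\lambda'$, lift $\bw$ to $w_0\in\sB_{-\fracdmm}$, and let $\bar\delta_{w_0}\in\overline{\sS(\sB_0)}$ be the conjugate of the model function supported on $w_0+\sB_0$. Because the Cayley transform satisfies $g-1\equiv c(g)\pmod{\fgl(V)_{x,\fracmm^+}}$ and $c(g)\,\sB_{-\fracdmm}\subseteq\sB_{\fracdmm}=\sB_0$, the group $\Jx$ fixes the coset $w_0+\sB_0$; evaluating \eqref{eq:LA} at $w_0$ and using the self-duality of $\sB$ to push the error terms into $\fpp_k$ (where $\psi$ vanishes), \Cref{lem:MPsi}(b) — the factor $\tfrac12$ in $\Bg$ cancelling the twist of the model — gives that $\Jx$ acts on $\bar\delta_{w_0}$ by $\psi_\lambda^{-1}$ (here $[M(w_0)]=\sfM(\bw)=\lambda$) and $G'_{x',\fracmm}$ acts by $\psi_{\lambda'}^{-1}$. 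Since $\sfM,\sfM'$ are equivariant and $\omega_0$ has no cocycle on $\Sp_{\sB_0}$, the group $\sfSl\times\sfSlp$ permutes $\{\bar\delta_w:\bw\in\sfXll\}$, all of whose members have the same $\Jx\times G'_{x',\fracmm}$-character. Set $v=\sum_{g\in\sfSl}\chi(g)\,\bar\delta_{gw_0}$. By \Cref{lem:Xll}(i), $\sfXll=\sfSl\cdot\bw$, so every $\bar\delta_{gw_0}$ lies in the correct $\Jx\times G'_{x',\fracmm}$-eigenspace, and $\sfSl$ acts on $v$ by $\chi^{-1}$ by construction. For $g'\in\sfSlp$ write $g'\bw=g_0\bw$ with $(g_0^{-1},g')\in\sfSw$; by \Cref{lem:Xll}(ii)--(iv) one has $g_0=\alpha(g')^{-1}$ modulo $\Stab_{\sfSl}(\bw)$, and since $\sfSl$ commutes with $\sfSlp$ in $\Sp(W)$ a short manipulation shows that $\sfSlp$ acts on $v$ by $\chi\circ\alpha=(\chi^*\circ\alpha)^{-1}=\chi'^{-1}$. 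Finally $v\ne0$ iff $\chi$ is trivial on $\Stab_{\sfSl}(\bw)$, which by \Cref{lem:Xll}(iii)--(iv) is automatic outside \Cref{case:E} and is exactly the condition $\chi|_{\bS_\lambda}=1$ inside it. Hence $v$ has the required type and the multiplicity space is nonzero in the asserted range.

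It follows that $\piSigmap$ is a nonzero quotient of the big theta lift $\Theta(\piSigma)$; since $\piSigma$ is an irreducible supercuspidal representation, $\Theta(\piSigma)$ is irreducible (hence supercuspidal) whenever it is nonzero (see \cites{Ho,Wa,GT}), so $\theta(\piSigma)=\Theta(\piSigma)=\piSigmap$, proving (i) and (ii). The main obstacle is the computation in the second paragraph: establishing precisely that $\Jx\times G'_{x',\fracmm}$ acts on $\bar\delta_{w_0}$ by $\psi_\lambda^{-1}\boxtimes\psi_{\lambda'}^{-1}$, which forces one to track the Moy--Prasad valuations on $W$, the self-duality of $\sB$, the conductor of $\psi$, and the normalization of $c$ against \Cref{lem:MPsi}; and, for \Cref{case:E}, the input \Cref{lem:Xll}(iv) (proved in \Cref{app:Xll}) that $\sfSl$ does not act freely on $\sfXll$, which is what produces the restriction $\chi|_{\bS_\lambda}=1$, the extremal instance of which is the classical $\Mp(2)$ phenomenon, cf.~\cite{Moen93}.
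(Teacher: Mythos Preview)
Your approach is essentially the paper's, phrased in dual form: both arguments realize $\omega$ in the lattice model attached to the self--dual lattice $A=\sB_{\fracdmm}$ (your $\sB_0$ equals $A$ since $0\notin\Jump(\sB)$), compute via \Cref{lem:MPsi}(b) that each delta function $\delta_{w_0}$ with $w_0\in\sB_{-\fracdmm}$ is a $\psi_{\sfM(\bw)}\boxtimes\psi_{-\sfM'(\bw)}$--eigenvector for $G_{x,\fracmm}\times G'_{x',\fracmm}$ (this is exactly \eqref{eq:gf}), and then isolate the correct $\sfSl\times\sfSlp$--type inside the span of the $\delta_w$ for $\bw\in\sfXll$ using \Cref{lem:Xll}.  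The paper packages this last step by invoking \Cref{lem:split} to obtain a $\sfJ$--equivariant isomorphism $R_\sfX\colon\sS(A)_B\iso\bC[\sfX]$ and then reading off the decomposition \eqref{eq:dec} of $\bC[\sfXll]$ as a piece of the regular representation of $\sfSl$; you instead average delta functions by hand.  Where you apply Frobenius reciprocity to get $\Hom(\omega,\piSigma\boxtimes\piSigmap)\neq 0$, the paper applies it in the other direction to embed $\piSigma\boxtimes\piSigmap\hookrightarrow\sS(A)$ and then splits off the summand by supercuspidality; these are equivalent.

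The one place to be careful is your ``short manipulation'' for the $\sfSlp$--action.  The identity $g'\bw=\alpha(g')^{-1}\bw$ holds in $\sfX=B/A$, but $\delta_{g'w_0}$ and $\delta_{\alpha(g')^{-1}w_0}$ can differ by a phase $\psi\bigl(\tfrac12\inn{\,\cdot\,}{a}\bigr)$ coming from the discrepancy $a\in A$; to make this phase vanish you must lift $\alpha$ to the homomorphism $\talpha\colon H'_{\Gamma'}\to H_\Gamma$ of actual stabilizers (as constructed in the proof of \Cref{lem:Xll}(ii) in \Cref{app:Xll}), for which $g'w_0=\talpha(g')^{-1}w_0$ holds on the nose in $W$.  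The paper's route through \Cref{lem:split} and $\bC[\sfX]$ absorbs exactly this bookkeeping into the $\sfJ$--equivariance of $R_\sfX$, which is the trade--off between the two presentations.
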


The above theorem gives \Cref{thm:intro} (i).

\subsection{} \label{sec:sfX}
We set $B = \sB_{-\fracdmm}$ and $A = \sB_{\fracdmm}$. By (C3),
$A^\sharp = A$ and $B^\sharp = \sB_{\fracdmm^+} = \sB_{\fractdmm}$.
Since $m\geq 2$, $\sB_{-\fracdmm}\fpp_k \subseteq \sB_{\fractdmm}$ and
$B^\sharp$ is a good lattice. We form the following exact sequences of $\fff$-vector
spaces: 
\begin{equation} \label{equ9}
\xymatrix{ 0\ar[r]& \sfY \ar[r]& \sfW \ar[r] & \sfX \ar[r] & 0}
\end{equation}
where
\[
\sfW = B/B^\sharp = \sB_{-\fracdmm:\fractdmm},\ \
\sfX = B/A = \sB_{-\fracdmm:\fracdmm} \text{ \ and \ }
\sfY = A/B^\sharp = \sB_{\fracdmm:\fractdmm}.
\]
The symplectic form on $W$ induces a non-degenerate $\fff$-symplectic form on $\sfW$ and
$\sfY$ is a maximal isotropic subspace in $\sfW$. Let
\begin{align*}
\sfP & = (G_{\sL}/G_{\sL,\frac{2}{m}}) \times
(G'_{\sL'}/G'_{\sL',\frac{2}{m}}) \text{ and } \\
\sfJ & = (G_{\sL}/G_{\sL,\fracmm}) \times
(G'_{\sL'}/G'_{\sL',\fracmm})=\sfGx\times \sfGxp.
\end{align*}
Then \eqref{equ9} is an exact sequence of $\sfP$-modules.
 The proof of following lemma is given in \Cref{app:split}.

\begin{lemma} \label{lem:split}
The natural quotient $\sfP \twoheadrightarrow \sfJ$ has a splitting such that
the exact sequence \eqref{equ9} splits as 
$\sfJ$-modules. We denote the splitting by $\sfW = \sfY\oplus \sfX$. 
\end{lemma}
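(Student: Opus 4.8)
\emph{The plan} is to prove the lemma in two stages and to arrange that a single choice serves both. First I would produce a group‑theoretic section $s\colon\sfJ\to\sfP$ of the projection $\sfP\twoheadrightarrow\sfJ$; then I would show that, with $\sfJ$ acting on $\sfW$ through $s$, the sequence \eqref{equ9} of $\fff$‑vector spaces splits $\sfJ$‑equivariantly. The point is to build $s$ so that its image respects a grading of $\sfW$ refining \eqref{equ9}; the second stage is then immediate.

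\emph{First I would reformulate the two short exact sequences.} Let $N=\ker(\sfP\twoheadrightarrow\sfJ)$. By \Cref{cor:JP} the jumps of $\sL$ and of $\sL'$, hence those of $\fgg_{\sL,\bullet}$ and $\fgg'_{\sL',\bullet}$, lie in $\fracmm\bZ$, so $\fracmm$ and $\frac{2}{m}$ are consecutive jumps; the Cayley transform of \Cref{sec:Epirep} then identifies $N$ with the $\fff$‑vector space $\sfg_{\sL,\fracmm}\times\sfg'_{\sL',\fracmm}$, on which $\sfJ=\sfGx\times\sfGxp$ acts by its adjoint action on these graded pieces. On the module side, since $\Jump(\sB)\subseteq\fracdmm+\fracmm\bZ$ the only jump of $\sB_\bullet$ strictly between $-\fracdmm$ and $\fractdmm$ is $\fracdmm$; hence $\sfW=\sB_{-\fracdmm:\fractdmm}$ is a two‑step filtered space whose associated graded pieces are precisely $\sfX=\sB_{-\fracdmm:\fracdmm}$ and $\sfY=\sB_{\fracdmm:\fractdmm}$, and $N$ acts trivially on each of $\sfX=\sfW/\sfY$ and $\sfY$ while mapping $\sfW$ into $\sfY$.

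\emph{Next I would construct the section.} Each of $x$ and $x'$ determines a parahoric $\foo_k$‑group scheme $\mathcal{G}$, resp.\ $\mathcal{G}'$, with $\mathcal{G}(\foo_k)$ the connected stabilizer $G_{\sL,0}$ and special fibre $\overline{\mathcal{G}}$ over $\fff$ whose reductive quotient is $\sfGx$; since $p$ is large, $\overline{\mathcal{G}}$ has a Levi decomposition $\overline{\mathcal{G}}=\sfGx\ltimes\mathcal{R}_{\mathrm{u}}$, the Levi being generated by a maximal torus and the affine root subgroups vanishing at $x$ and coinciding with the product of classical $\fff_D$‑groups of \Cref{lem:G0} acting on the graded pieces $\sfL_r$ of $\sL$. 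Because $m\ge2$ we have $\frac{2}{m}\le1$, so the first congruence subgroup of $\mathcal{G}$ lies in $G_{\sL,1}\subseteq G_{\sL,\frac{2}{m}}$; consequently $\sfP$ is a quotient of $\overline{\mathcal{G}}(\fff)\times\overline{\mathcal{G}'}(\fff)$ by a subgroup of $\mathcal{R}_{\mathrm{u}}(\fff)\times\mathcal{R}_{\mathrm{u}}'(\fff)$, and $\sfJ$ is the further quotient by all of $\mathcal{R}_{\mathrm{u}}(\fff)\times\mathcal{R}_{\mathrm{u}}'(\fff)$ (using $G_{\sL,\fracmm}=G_{\sL,0^+}$ at an epipelagic point together with the standard identification $G_{\sL,0^+}/\mathcal{G}(\fpp_k)\cong\mathcal{R}_{\mathrm{u}}(\fff)$). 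The image in $\sfP$ of the Levi $\sfGx\times\sfGxp\subseteq\overline{\mathcal{G}}(\fff)\times\overline{\mathcal{G}'}(\fff)$ then meets $N$ trivially (a reductive subgroup meets a unipotent normal subgroup trivially) and maps onto $\sfJ$; this image is the section $s(\sfJ)$, and the component group $G_\sL/G_{\sL,0}$, finite of order prime to $p$, is handled the same way.

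\emph{Finally I would split \eqref{equ9}.} The Levi $\sfGx$ preserves the grading of $\sL$ — a weight vector of weight $\mu$ carries degree $\langle\mu,x\rangle$, and both the torus and the root subgroups vanishing at $x$ act degree‑preservingly — and likewise $\sfGxp$ preserves the grading of $\sL'$; hence $s(\sfJ)$ preserves the grading of $\sB=\sL\otimes\sL'$ and therefore acts on $\sfW=\sB_{-\fracdmm:\fractdmm}$ preserving its two graded pieces. This grading supplies a canonical $s(\sfJ)$‑stable complement to $\sfY$ in $\sfW$, isomorphic to $\sfX$, i.e.\ $\sfW=\sfX\oplus\sfY$ as $\sfJ$‑modules, which is the required splitting. (Equivalently, over $\kur$ the two pieces of $\sfW$ are the eigenspaces of the image in $\sfP$ of the central element $\crho(\zeta)\in\bfT(\kur)_{x,0}$ of \Cref{sec:Kac-Vinberg}, which commutes with $s(\sfJ)$; this produces the equivariant splitting intrinsically, to be descended to $\fff$ by canonicity.) The main obstacle is the first stage: unlike the splitting of $\sfA_{x,\lambda}$ in \Cref{sec:Epirep}, here $N$ is pro‑$p$ and $|\sfJ|$ is divisible by $p$, so the coprimality argument is unavailable; it is replaced by the structural input that the special fibre of the parahoric group scheme splits as $\sfGx\ltimes\mathcal{R}_{\mathrm{u}}$ (which is where the hypothesis that $p$ is large enters) together with the inequality $\frac{2}{m}\le1$ that absorbs the congruence subgroup. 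Some extra bookkeeping is needed for the identifications above when $D$ is a ramified quadratic extension.
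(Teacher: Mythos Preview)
Your approach is the same as the paper's in outline: lift $\sfJ$ to a Levi inside $\sfP$, then use that this Levi respects a weight-grading on $V$ and $V'$ (hence on $\sB$) to produce the $\sfJ$-stable complement to $\sfY$ in $\sfW$. The paper even opens \Cref{app:split} by remarking that the group-theoretic section follows from McNinch, before giving its own elementary construction.

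The subtle gap in your write-up is that the two stages invoke \emph{different} Levis. McNinch's theorem supplies \emph{some} Levi of the special fibre $\overline{\\mathcal{G}}$, unique only up to $\mathcal{R}_{\mathrm{u}}$-conjugacy; your second-stage claim that $s(\sfJ)$ ``preserves the grading of $\sB$ and therefore acts on $\sfW$ preserving its two graded pieces'' requires the \emph{specific} block-diagonal Levi of \Cref{lem:G0}. You never check these agree, and since $\sfW$ is merely filtered (not graded), a non-block-diagonal lift of $\sfJ$ need not preserve any particular complement to $\sfY$; the extension class of $\sfW$ as a $\sfJ$-module genuinely depends on which section is chosen (sections differ by $Z^1(\sfJ,N)$, and $N$ acts nontrivially from $\sfX$ to $\sfY$). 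The parenthetical via $\crho(\zeta)$ does not rescue this: that element lies in the maximal torus but is not central in $\sfGx$.

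The paper closes this gap by never invoking McNinch in the actual argument: it fixes a Witt basis adapted to $\sL$, builds the block-diagonal $\foo_k$-group scheme $\sQ=\rU(\sV^0)\times\rU(\sV^{\frac12\val_D})\times\prod\GL_{\foo_K}(\sV^r)$ directly, and observes both that $\sQ/\sQ_1\hookrightarrow G_\sL/G_{\sL,\fracmm^+}$ is a section (since $\sQ_1\subseteq G_{\sL,1}\subseteq G_{\sL,\fracmm^+}$ by $m\ge 2$) and that $\sQ$ already stabilises the explicit $\foo_k$-sublattice $\sX_{-\fracdmm}=\sum_{t+t'=-\fracdmm}\sV^t\otimes_{\foo_K}\sV'^{t'}$ whose image in $\sfW$ is the desired complement. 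Your sketch becomes a proof once you replace the abstract appeal to McNinch by this explicit construction of the Levi over $\foo_k$ --- which is exactly what the paper does.
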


\subsection{Proof of \Cref{thm:main0}} 
We recall the lattice model $\sS(A)$ in \Cref{sec:LM}.  Let $\sS(A)_B$
be the subspace of functions in $\sS(A)$ with support in $B$.  For
$f\in \sS(A)_B$, $w\in B$ and $b'\in B^\sharp$,
$f(w+b') = \psi(\half\inn{w}{b'}) f(w) = f(w)$. Therefore, we could
view $\sS(A)_B$ as a subspace in $\bC[\sfW]$. We fix the splitting
$\sfW = \sfY\oplus \sfX$ in \Cref{lem:split}.  Let $\sfJ$ act on
$\bC[\sfX]$ by translation.  Since
$f(w+a) = \psi(\frac{1}{2}\inn{w}{a})f(w)$ for all $a \in A$, the
restriction map $R_\sfX$ from $\sfW$ to $\sfX$ induces a $\sfJ$-module
isomorphism
\begin{equation} \label{eq:RX}
\xymatrix{
R_{\sfX} : \sS(A)_B \ar[r]^{\ \ \ \ \ \sim} & \bC[\sfX]
}
\end{equation}
whose inverse map $R_\sfX^{-1}$ is given by
\[
(R_\sfX^{-1} F)(w) = \psi \left( \half \inn{x}{y} \right) F(x)
\]
for all $w\in B$ such that $w \equiv x+y \pmod{B^\sharp}$ with
$x + B^\sharp \in \sfX$ and $y + B^\sharp \in \sfY$.

\medskip

For $f\in \sS(A)_B$, $w\in B$ and $g\in G_{\sL,\fracmm}$, we have $(g^{-1}-1)w \in
A$. By \eqref{eq:LA}  and \Cref{lem:MPsi}, we have\footnote{See also the proof of \cite[Theorem~5.5]{Pan}} 
\begin{equation} \label{eq:gf} 
\begin{split}
  \omegaA(g) f(w) & = f(g^{-1} w) 
  = f((g^{-1}-1)w+ w)  \\
   & =  \psi\left(\half\inn{w}{(g^{-1}-1)w} \right)f(w) = 
\psi\left(\frac{1}{2}\inn{(g-1)w}{w} \right)f(w)  \\
   & = \psi(\Bg(M(w),c(g)))f(w) = \psi_{\sfM(\bw)}(g)f(w)
 \end{split}
\end{equation}
where $\bw$ is the image of $w$ in $\sfX = B/A$. 
Similarly $\omegaA(g')f(w) = \psi_{-\sfM'(\bw)}(g')f(w)$ for all $g'\in G_{\sL,\fracmm}$.

Let $\sS(A)_B^\lambda$ be the
subspace of functions in $\sS(A)_B$ such that $G_{\sL,\fracmm}$ acts by
$\psi_\lambda$. Then it follows from~\eqref{eq:gf} and \eqref{eq:RX} that
\[
\sS(A)_B^\lambda = R_\sfX^{-1}(\bC[\sfM^{-1}(\lambda)]).
\]
A similar consideration applies to $\lambda' \in \sfg_{\sL',-\fracmm}'$ too.
Let
$\sS^{\lambda, \lambda'} := \sS(A)_B^{\lambda} \cap
\sS(A)_B^{\lambda'}$.  Then
\[
\sS^{\lambda,\lambda'} = R_\sfX^{-1}(\bC[\Xllp]).
\]

We fix a $\barww$ in $\Xllp$. By \Cref{lem:Xll}, $\sfSl \twoheadrightarrow
\sfXll$ given by $s\mapsto s\cdot \bw$ is a surjection
of $\sfSl\times \sfSlp$-set. 
Here $(s,s') \in \sfSl\times \sfSlp$ acts on $\sfSl$ by $(s,s') \cdot s_0 = s s_0 \alpha(s')^{-1}$ for all $s_0\in \sfSl$.

By the decomposition of regular representation of $\sfS_{\lambda}$, we have
\begin{equation}\label{eq:dec}
\bC[\Xllp]  \subseteq \bC[\sfSl]= \bigoplus_{\chi\in \widehat{\sfS_{\lambda}}}
\bC_{\chi}\otimes \bC_{\chi^*\circ \alpha}
\end{equation}
as $\sfS_{\lambda}\times \sfS'_{\lambda'}$-modules. In (i),
\eqref{eq:dec} is an equality.  In (ii) the summand
$\bC_{\chi}\otimes \bC_{\chi^*\circ \alpha}$ occurs in $\bC[\sfXll]$
if and only if $\chi|_{\bSl}$ is trivial by \Cref{lem:Xll}.

Fix any $\sfS_\lambda$-character $\chi$ which occurs in $\bC[\sfXll]$. It is
clear that the  $H_{\lambda}$ and $H'_{\lambda'}$ in
\eqref{eq1} act on the space
$R_\sfX^{-1}(\bC_{\lambda}\otimes \bC_{\chi^*\circ \alpha})$ by the
characters $\psi_\lambda \otimes \chi$ and
$\psi_{\lambda'} \otimes \chi^*\circ \alpha$ respectively.
By Frobenius reciprocity, the subspace $R_\sfX^{-1}(\bC_{\lambda}\otimes \bC_{\chi^*\circ \alpha}) \subseteq \sS(A)$
induces a non-zero intertwining map
\[
\xymatrix{
\pi_x^{\tG}(\lambda,\chi) \boxtimes
\pi_{x'}^{\tG'}(\lambda',\chi^*\circ \alpha) \ar[r]& \sS(A).}
\]
The left hand side is irreducible so the above
map is an injection. Since the left hand side is also supercuspidal, by the smoothness of $\sS(A)$, we conclude that the
left hand side is a direct summand in $\sS(A)$ and we have a
projection map from $\sS(A)$ to the left hand side. 
This completes the proof of \Cref{thm:main0}. \qed

\subsection{Proof of \Cref{thm:intro}~(ii)} \label{sec:Converse}
Part (a) is a restatement of \Cref{prop:M.w}. By \Cref{thm:intro} $\piSigma$ has a nonzero theta lift and $\theta(\piSigma) = \pi'_{\Sigma''}$ where $\Sigma'' = (x',\lambda',\chi^* \circ \alpha)$. By the uniqueness of the theta lift \cite{GT,Wa}, $\piSigmap = \pi'_{\Sigma''}$ and by  \Cref{prop:MKT} $\chi' = \chi^* \circ \alpha$. This proves (b).
\qed

\appendix

\section{Proof of \Cref{lem:split}} \label{app:split}
 \def\vD{{\val_D}} 

\subsection{} \label{SA1}
First the fact that $\sfP\rightarrow \sfJ$ splits follows from the work of McNinch \cite{Mc}. For our case, the splitting could be constructed by an elementary method which we will explain 
below. 

 We retain the notation in \Cref{SS21}. Let $K$ be the maximal unramified
   extension in~$D$ defined in the following way: \begin{inparaenum}[(i)] 
	\item $K := k$ if $D=k$; 
	\item $K := D$ if $D/k$ is an unramified extension; 
	\item $K := k$ if $D/k$ is a ramified extension and 
	\item if $D$ the quaternion algebra over $k$, then $K$ is the unramified quadratic extension of $k$ in $D$ normalized by $\varpi_D$.\end{inparaenum} 

Let $\nu_D = \val(\varpi_D)$. Then $\vD = \half$ if and only if $D/k$ is ramified or $D$ is the
 quaternion algebra over $k$.  Under this setting, $D=K$ if $\vD=1$ and
 $D = K \oplus \varpi_D K$ if $\vD = \half$. In all cases, $\fff_K = \fff_D$.

\subsection{} \label{sec:apartment}
We recall the explicit description of an apartment in $\cB(\bfG,k)$
(c.f. \cite[\SSS{2.9}]{BT4} and \cite[\SSS{2}-4]{BS}). 
Let $\ceil{r}$ denote the largest integer not greater than $r \in \bR$.
Let $n$ be the dimension of a maximally isotropic subspace in $V$. Let
$I := I^+\sqcup I^- \sqcup I^0$ where $I^+ = \set{1, \cdots, n}$,
$I^- = -I^+$ and $I^0$ is any index set with $\dim_DV - 2n$ elements.
Fix a basis $\set{e_i | i \in I}$ of~$V$ such that 
\begin{enumerate}[(a)]
\item $\inn{e_i}{e_j}_V = \inn{e_{-i}}{e_{-j}}_V = 0$ and $\inn{e_i}{e_{-j}}_V = \delta_{i,j}$ for all $i, j \in I^+$;
\item $e_i$ is anisotropic for $i\in I^0$  and $\inn{e_i}{e_j}_V = 0$ for $i\in I^0$
  and $i\neq j\in I$.
\end{enumerate}
For $i\in I^0$, we can choose $e_i$ such that
\begin{enumerate}[(i)]
\item  $\inn{e_i}{e_i}_V$ has valuation
either $0$ or $\vD$;
\item $\inn{e_i}{e_i}_V$ takes value either in $\foo_K$ or in $\varpi_D\foo_K$.  \footnote{This
    condition is non-trivial if $D/k$ is a ramified extension or $D/k$ is a quaternion algebra.}  
\end{enumerate}

Let $\bfS$ be the maximal $k$-split torus in $\bfG$ which stabilizes $e_iD$ for all $i\in I^+\sqcup I^-$ and fixes $e_j$ for all $j \in I^0$.
Then the apartment $\cA(\bfS,k)$ in $\cB(\bfG,k)$ corresponds to the set of self-dual lattice
functions which split under this basis. 
More precisely, if $\sL$ is in the apartment, then there is a (unique) tuple of real
numbers $(a_1,\cdots, a_n)\in \bR^n$ such that
\begin{equation} \label{eqsLr}
\sL_r = \bigoplus_{i\in I} e_i \fpp_D^{\ceil{(r-a_i)/\vD}}
\end{equation}  
where
\begin{inparaenum}[(i)]
\item $a_{-i} = - a_i$ for $i\in I^+$ and
\item $a_{i} = \half\val(\inn{e_i}{e_i}_V)$. 
\end{inparaenum}
In fact, $(a_1,\cdots,a_n) \mapsto \sL$ gives an identification of $\bR^n$ with
the apartment.

\medskip

\noindent \remark\ 
If $\bfG$ splits over an unramified extension of $k$, then the lattice function in \eqref{eqsLr} corresponds to a hyperspecial point in $\cA(\bfS,k)$ if and only if  $a_i/\nu_D \in b + \bZ$ for all $i \in I$ where $b = 0$ or $\half$.

\subsection{} We let $\sL$ be a lattice function as in \eqref{eqsLr} above.
We consider two cases. 
\trivial[h]{In the following discussion, $r$ or $\mu$ is not assumed to be in
  $\Jump(\sL)$ or $\Jump(\sB)$. If they are not jump points, the corresponding
  objects are just trivial one.}
\begin{enumerate}[1.]
\item[\underline{Case 1}.] First we assume that $\vD= 1$. In this case $D=K$. 
Let $[r]$ denote the coset $r+\bZ \in \bQ/\bZ$ and define
\[
\Vr := \sum_{a_i\equiv
  r\pmod{\bZ}} e_i K
\quad \text{and}\quad 
\sVr := \Vr\cap \sL_r = \sum_{a_i\equiv
  r\pmod{\bZ}}e_i\fpp_K^{r-a_i}.
\]
We make the following observations.
\begin{enumerate}[(a)]
\item 
The restriction of the Hermitian sesquilinear form to $\Vr$ is non-degenerate if $r\equiv 0$ or $\half\pmod{\bZ}$ and totally
isotropic if otherwise.
 
\item For $r\in \bR$, $\Vr$ is in perfect pairing with $\Vnr$. 
In particular, $\sV^0$ and $\sV^\half$ have $\epsilon$-Hermitian sesquilinear forms
  $\inn{}{}_V$ and $\inn{}{}_V \varpi_D^{-1}$ which are defined over $\foo_K$. 

\delete{
\item Clearly $\dim_K \Vr = \dim_{\fff_D}\sL_r/\sL_{r^+}$ and the natural inclusion $\sVr \hookrightarrow \sL_r$ induces an isomorphism 
$\sVr/\sV^{r}\fpp_D \iso \sL_r/\sL_{r^+}$. 
\item 
We have $V = \bigoplus_{r \in \bQ/\bZ}  \Vr$. We set
\[
\begin{split}
  \sM & := \rU(\sV^0)\times \rU(\sV^\half)\times \prod_{r\in (0,\half)}
  \GL_{\foo_K}(\sV^{r}) \\
  & \cong G_\sL \cap \left(\prod_{[r] \in \bQ/\bZ}\GL_{K}(\Vr)\right).
\end{split}
\]
Here $\sM$ is understood as a $\foo_k$-group scheme.
In particular, each factor is a classical group scheme over $\foo_k$ 
\item Let $M$ and $\sfM$ denote the generic fiber and special fiber of $\sM$
  respectively.  The lattices $\sVr$ give a vertex $y$ in the building of $M$.
  The Moy-Prasad filtration on $M$ given by this vertex is the same as
  $s\mapsto M\cap G_{\sL,s}$.  In particular, $\sM = M_{y}$,
  $M_{y,0^+} = M_{y,1}$, $\sfM = M_{y}/M_{y,0^+}$ and
  $\sfM = M_y/M_{y,0^+} \iso G_{\sL}/G_{\sL,0^+} = \sfG_\sL$.
}
\end{enumerate}

\smallskip

\item[\underline{Case 2}.]
Now assume $\vD = \half$. 
We define the $K$-module
\[
\Vr := \sum_{a_i\equiv r} e_i K + \sum_{a_i+\half\equiv r} e_i \varpi_D K
\]
and $\foo_K$-module
\[
\sVr := \Vr\cap \sL_r = \sum_{ a_i\equiv
  r}e_i\fpp_K^{r-a_i} + \sum_{ a_i +\half\equiv
  r } e_i\varpi_D \fpp_K^{r-a_i-\half}.
\]
We make the following observations.
\begin{enumerate}[(a)]
\item The two $K$-subspaces $\Vr$ and $\Vrhalf$ in $V$ are different. However $\Vr = \Vrhalf \varpi_D$ and $\sV^{r+\half} = \sV^r\varpi_D$.

\item The restriction of the Hermitian sesquilinear  form to $\Vr$ is non-degenerate if $r \equiv 0$ or $\quarter \pmod{\half \bZ}$ and totally isotropic if otherwise. 

\item For $r\in \bR$, $\Vr$ is in perfect pairing with $\Vnr$. In
  particular, there is an $\epsilon$-Hermitian sesquilinear  form
  $\inn{}{}_V$ and a $(-\epsilon)$-Hermitian sesquilinear  form $\inn{}{}_V \varpi_D^{-1}$ defined on $\sV^0$ and $\sV^{\quarter}$ respectively. Both forms are defined over~$\foo_K$. 
\end{enumerate}
\end{enumerate}

\medskip

\def\xyiso{\ar[r]^<>(.5){\sim}}
Thanks to the definitions of $\Vr$ and $\sVr$, the following holds for both Cases 1 and~2:
\begin{enumerate}[(i)]
\item $\dim_K \Vr = \dim_{\fff_D}\sL_r/\sL_{r^+}$ and the natural inclusion $\sVr \hookrightarrow \sL_r$ induces an isomorphism 
\begin{equation}\label{eq:sVr.iso}
\xymatrix{
\sVr/\sV^{r}\fpp_K \xyiso &\sL_r/\sL_{r^+}.}
\end{equation}

\item \label{it:V.dec} $V = \bigoplus_{[r]\in \bQ/\bZ} \Vr$.

\item Define an $\foo_k$-group scheme: 
\[
\sQ := \rU(\sV^0)\times \rU(\sV^{\half\vD})\times \prod_{r\in (0,\half\vD)} \GL_{\foo_K}(\sV^{r}).
\]
Let $Q$ and $\sfQ$ denote the generic fiber and special fiber of $\sQ$
  respectively. 
\item The lattices $\sVr$ give a vertex $y$ in the building of $Q$.
  Clearly, 
  \[
	\sQ = Q_{y},\quad Q_{y,0^+} = Q_{y,1} \quad \text{and}\quad \sfQ = Q_{y}/Q_{y,0^+}.
  \]
\item The natural action of $\sQ$ on $V$ identifies $Q$ with $G \cap \left(\prod_{[r]\in \bQ/\bZ}\GL_K(\Vr)\right)$
so that $\sQ = Q \cap G_{\sL}$.
\item The natural embedding $\sQ \rightarrow G_{\sL}$ induces an isomorphism of
  $\fff$-groups
  \[
  \xymatrix{
    \sfQ = Q_{y}/Q_{y,1} \xyiso & G_{\sL}/G_{\sL,0^+} = \sfG_\sL
  }
  \]
  which is compatible with \eqref{eq:sVr.iso}. This follows for the fact that
  the both sides are isomorphic to the right hand side of \eqref{eq:sfG}.
\savemyenumi
\end{enumerate}
For $V'$, we likewise divide into two cases and define similar notations
$V'^{[r]}$, $\sV'^r$, $Q'$, $\sQ'$ etc as above.

\subsection{} 

We recall $\sB = \sL \otimes \sL'$.  For $\mu\in \bR$, we define
$X^{[\mu]} = \sum_{[t]+[t'] = [\mu]} V^{[t]}\otimes_K V'^{[t']}$. 
Then
\begin{enumerate}[(i)]
\resumemyenumi
\item $W = \bigoplus_{[\mu]\in \bQ/\bZ} X^{[\mu]}$ 
\item $\sX_\mu := \sum_{t+t' = \mu} \sV^t\otimes_{\foo_K} \sV'^{t'}$ equals
$X^{[\mu]}\cap \sB_{\mu}$.
\savemyenumi
\end{enumerate}
Using the natural inclusion $\sX_\mu \hookrightarrow \sB_{\mu}$, we have
\begin{enumerate}[(i)]
\resumemyenumi
\item $\sX_{\mu+1} = \sX_{\mu^+} := X_{[\mu]}\cap \sB_{\mu^+}$.

\item \label{it:sX.iso} $\sX_{\mu}/\sX_{\mu+1} \iso  \sX_{\mu}/\sX_{\mu^+} \iso \sB_{\mu}/\sB_{\mu^+}$.

\end{enumerate}

\smallskip

\begin{proof}[Proof of \Cref{lem:split}]
We recall \eqref{equ9} where 
\[
\sfY := \sB_{\fracdmm:\fracdmm^+}, \quad  \sfW := \sB_{-\fracdmm:\fracdmm^+} \quad \text{and} \quad \sfX := \sB_{-\fracdmm:-\fracdmm^+}.
\] 
Let $\sfX' := \sX_{-\fracdmm}/\sX_{-\fracdmm}\fpp_K$. 
Clearly $\sfX' \iso \sfX$ by \eqref{it:sX.iso}. 

Note that $m\geq 2$. So $\fracmm<1$.  The inclusion
$\sX_{-\fracdmm}\hookrightarrow \sB_{-\fracdmm}$ gives an embedding
\begin{equation} \label{eq:split.X}
\xymatrix{
 \sfX' = \sX_{-\fracdmm}/\sX_{-\fracdmm+1} \ar@{^(->}[r]& \sB_{-\fracdmm}/\sB_{\fracdmm^+} = \sfW
} 
\end{equation}
which splits the quotient map $\sfW \twoheadrightarrow \sfX$. 
 The embedding $\sQ \times \sQ' \hookrightarrow G_\sL\times G'_{\sL'}$ induces a splitting
  of $\sfP \twoheadrightarrow \sfJ$:
\begin{equation}\label{eq:split.J}
\xymatrix{
\sfJ \riso \sfQ\times \sfQ' =  \sQ/\sQ_{1} \times
\sQ'/\sQ_{1}' \ar@{^(->}[r] & G_{\sL}/G_{\sL,\fracmm^+} \times
G'_{\sL'}/ G'_{\sL',\fracmm^+} = \sfP.
}
\end{equation}
Note that
$\sfY$, $\sfX'$ and $\sfW$ are natural $\sfQ \times \sfQ'$-modules and \eqref{eq:split.X}
is an $\sfQ \times \sfQ'$-equivariant embedding.  We get a decomposition
$\sfW = \sfX' \oplus \sfY$ as $\sfJ$-modules under the
splitting \eqref{eq:split.J}. 
\end{proof}

\section{Matrix calculations} \label{app:M}

 In this appendix, we prove \Cref{LJ1} and \Cref{lem:Xll}.

\subsection{Proof of \Cref{LJ1}} \label{app:M0} 
We construct below an $\AAA$ defined over
an algebraically closed field $\barkk$ which satisfies the lemma. The
lemma and the proof below is valid for any field provided
$(G,G')$ is an irreducible dual pair such that $G$ and $G'$ are both
split.

There are only several cases.
\begin{asparaenum}[1.]
\item $(G,G') = (\GL(n), \GL(n'))$ with $n \leq n'$.  We
can identify \begin{inparaenum}[(a)]
\item $W = \Mat_{n,n'} \oplus \Mat_{n,n'}$,
\item $(x,y)^\star  = (y,-x)$ for $(x,y) \in W$, 
\item $M(x,y) = xy^\top \in \fgl(n)$ and
\item  $M(x,y) = y^\top x \in \fgl(n')$.
\end{inparaenum}
We set 
\[
\AAA = \Set{ w= (\begin{pmatrix} a & 0
\end{pmatrix}, \begin{pmatrix} b & 0
\end{pmatrix}) |\begin{array}{l}a = \diag(a_1,\cdots, a_n) \\
                  b = \diag(b_1,\cdots, b_n)
\end{array}
\text{ with } a_i, b_i  \in \barkk}.
\] 
For any $w \in \AAA$, $M(w) = ab$ and
$M'(w) = \begin{pmatrix} ab & 0 \\ 0 & 0 
\end{pmatrix}$.

\item $(G,G')= (\Sp(2n), \rO(2n'+1))$ with $n \leq n'$.  We can choose
  suitable bases so that $V = \barkk^{2n}$ and $V' = \barkk^{2n'+1}$
  such that $\inn{v_1}{v_2}_V = v_1^{\top} J v_2$ and
  $\inn{v'_1}{v'_2}_{V'} = v'^\top_1J'v'_2$ where
\[
J = \begin{pmatrix} 0& I_n \\ -I_n & 0
\end{pmatrix} \quad \text{and} \quad
J' = \begin{pmatrix}0 & 0 & I_n  \\ 0 & I_{2n'-2n+1} & 0\\ I_n &0& 0
\end{pmatrix}.
\]
Now we can identify \begin{inparaenum}[(a)]

\item $W = M_{2n'+1,2n}$,

\item $w^\star  = J^{-1} w^{\top} J'$, 

\item $M(w) = w^\star w$ and

\item $M'(w) = w w^\star$.
\end{inparaenum}
We consider 
\[
\AAA = \Set{\begin{pmatrix}a& 0 \\ 0& 0\\0 & -b
\end{pmatrix}|\begin{array}{l}a = \diag(a_1,\cdots, a_n), \\
                b = \diag(b_1,\cdots, b_n),
              \end{array},\text{ with } a_i, b_i  \in \barkk }.
\] 
For any $w\in \AAA$,  $M(w) =  \begin{pmatrix}
ab & 0\\ 0& -ab
\end{pmatrix}$
and $M'(w) = \begin{pmatrix} ab & 0& 0 \\ 0& 0 & 0\\ 0 & 0 &-ab
\end{pmatrix}$.

\item We leave the all other cases where
  $(G,G') = (\Sp(2n),\rO(2n'+2))$, $(\rO(2n),\Sp(2n'))$ or
  $(\rO(2n+1),\Sp(2n'))$ where $n\leq n'$ to the reader.  The formulas
  are similar to 2. \qed
\end{asparaenum}

\subsection{Proof of \Cref{lem:Xll}}\label{app:Xll} 
  We translate everything to the left hand side of \eqref{eq:tMM} and denote the
  images of $\bar{w}$, $\lambda$, $\lambda'$, $\sfXll$, $\sfSl$, $\cdots$, by
  $\tww$, $\tlambda$, $\tlambda'$, $\tsfXll$, $\tsfSl$, $\cdots$
  respectively. We also transport implicitly the Galois actions. Then $\tlambda$
  and $\tlambda'$ are regular semisimple elements.  It is enough to prove the
  statements for $\tlambda$ and $\tlambdap$.

  \begin{asparaenum}[(i)]
  \item First we assume that $\tlambda \in \Hom_\fffDE(\sfV,\sfV)$ is full
    rank. In this case $\tww \in \Hom_\fffDE(\sfV,\sfV')$ is full rank too.  By
    Witt's theorem, $\tsfM'^{-1}(\tlambda')$ is a single free $\sfGE$-orbit.
    Let $\tww'\in \tsfXll$. Then there is a unique $g \in \sfGE$ such that
    $\tww' = g \cdot \tww$. Clearly $g\in \Stab_{\sfGE}(\tlambda)$. For every
    $\sigma \in \Gal(E/k)$,
\begin{equation}\label{eq:btwp}
  g \cdot \tww = \tww' = \sigma(\tww') = \sigma(g)\cdot \sigma(\tww) =
  \sigma(g)\cdot \tww.
\end{equation}
Since $\sfGE$ acts freely, we have $g = \sigma(g)$. Hence
$g\in \tsfSl = (\Stab_\sfGE(\tlambda))^{\Gal(E/k)}$.  This proves (i) in these
cases.

Now we suppose that $\tlambda$ is not full rank i.e. Case (E). This
only occurs for unitary groups of equal rank. We refer to the \Cref{app:M0}
for the notation.  In this case,
$\tsfW = M_{nn}(\bfff) \oplus M_{nn}(\bfff)$ are two copies of $n$ by
$n$ matrices and $\tlambda$ is of rank $n-1$.  There is an element in
$\Gal(E/k)$ exchanging the two components of $\tww = (A,B)$, hence $A$
and $B$ have the same rank $n-1$. The group $\sfGE$ is a general
linear group. Let $\sfSLE$ be the special linear group in $\sfGE$.
Let $\tww'\in \tsfXll$. Let
$\sfSLE_{\tlambda} := \Stab_{\sfSLE}(\tlambda)$. It is straightforward
to check that $\tww'$ and $\tww$ are in the same
$\sfSLE_{\tlambda}$-orbit on which $\sfSLE_{\tlambda}$ acts freely.
Let $g\in \sfSLE_{\tlambda}$ such that $\tww' = g\cdot \tww$.  Again
by~\eqref{eq:btwp}, $g$ is Galois invariant, i.e. $g\in \tsfSl$.  This
proves (i).

\item Let $w \in \sB_{\sz}$ be a lift of $\bw$. Without loss of
  generality, we may assume that $w$ is of full rank. Then
  $\Gamma = M(w)$ and $\Gamma' = M'(w)$ are lifts of $\lambda$ and
  $\lambda'$ respectively. 
	Let $\bfSl$ (resp. $\bfSlp$) be the
  stabilizer of $\Gamma$ (resp. $\Gamma'$) in $G$ (resp. $G'$). We
  recall that $\bfSl$ is anisotropic so $\cB(\bfSl) = \set{ x }$ as
  shown in the proof of \Cref{prop:MKT}.  
	This implies that
  $\bfSl \subseteq G_\sL$ and $\bfSlp\subseteq G'_{\sL'}$.  Using the
  same argument and Witt's theorem as in (i), for every
  $g' \in \bfSlp$ there is a unique $g \in \bfSl$ such that
  $g' w = g^{-1} w$. The map $\talpha \colon \bfSlp \rightarrow \bfSl$
  given by $g' \mapsto g$ is a surjective homomorphism.  Note that
  $\bfSl$ (resp. $\bfSlp$) surjects onto $\sfSl$ (resp. $\sfSlp$)
  since $\Gamma$ (resp. $\Gamma'$) is a good element
  (c.f. \cite[Corollary 2.3.5 and Lemma~2.3.6]{KM}). Then $\talpha$
induces a homomorphism $\alpha \colon \sfSlp \rightarrow \sfSl$.

\item This follows from the proofs of (i) and (ii).
\item Note that $\Im\tww^\star = \Im \tlambda \subsetneq \sfV$. Let $g \in \tsfSl$. Then
  $g\in \tbSw$ if and only if $g|_{\Im \tww^*} =\id$ if and only if $g|_{\Im\tlambda} = \id$ if and only if $g \in
  \tbSl$, i.e. $\tbSw = \tbSl$. Since $\tsfXll \cong \tsfSl/\tbSw$, The last assertion is clear. \qed
\end{asparaenum}

\section{Lattice model and splitting}\label{app:SPLIT}

\subsection{}\label{app:Lsplit} 
\def\da{{\mathrm{d}a}} Let $\Sp(W)$ be a symplectic group of a
symplectic space $W$. \Cref{prop:liftSpA} follows from \Cref{lem:SS}
below.  One may compare the lemma with \cite[\SSS{4.1}]{SZ} and
\cite{Pan01}.
\begin{lemma} \label{lem:SS} Let $A_1$ and $A_2$ be two self-dual
  lattices in $W$.  For $i = 1,2$, let
  $\omega_{A_i}\colon \Sp(W) \rightarrow \Mp(W)$ be the section
  defined by the lattice model $\sS(A_i)$ as in \eqref{eq:LA}. Then
\[
\omega_{A_1}(g) = \omega_{A_2}(g) \quad \forall g\in \Sp_{A_1}\cap \Sp_{A_2}. 
\]
\end{lemma}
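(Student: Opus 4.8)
The plan is to compare the two lattice models $\sS(A_1)$ and $\sS(A_2)$ directly by means of an explicit intertwining operator. Both are concrete realizations of the oscillator representation of $\Mp(W)$; write $\rho_i\colon \Mp(W)\to\mathrm{GL}(\sS(A_i))$ for the two actions, so that by the very construction of the lattice model, equation \eqref{eq:LA} says $\rho_i(\omega_{A_i}(g))=\bigl[\,f\mapsto f(g^{-1}\cdot)\,\bigr]$ on $\sS(A_i)$ for every $g\in\Sp_{A_i}$. Since $\mathbb{C}^\times\subset\Mp(W)$ acts by the character $z\mapsto z$ in every model, two elements of $\Mp(W)$ lying over the same element of $\Sp(W)$ coincide as soon as they act equally in a single model. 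Hence it is enough to show $\rho_2(\omega_{A_1}(g))=\rho_2(\omega_{A_2}(g))$ for $g\in\Sp_{A_1}\cap\Sp_{A_2}$, and the right-hand side is the operator $f\mapsto f(g^{-1}\cdot)$ on $\sS(A_2)$.

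Next I would produce the standard intertwiner $T\colon\sS(A_1)\to\sS(A_2)$. As $A_1$ and $A_2$ are self-dual we have $A_1\cap A_2\subseteq A_i\subseteq A_1+A_2=(A_1\cap A_2)^{\sharp}$ with all indices finite, and $T$ is an averaging (partial-Fourier) operator, of the shape $(Tf)(w)=\sum_{u}\psi\!\left(\tfrac12\langle w,u\rangle\right)^{-1}f(w+u)$ with $u$ running over a transversal of $A_1\cap A_2$ in $A_2$; the precise twisting factor is the one dictated by the requirement that $Tf$ be $A_2$-quasi-periodic, locally constant and compactly supported, exactly as in \cite{Pan01} and \cite[\SSS{4.1}]{SZ}. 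Using $\langle A_i,A_i\rangle\subseteq\fpp_k$ one checks by a direct computation that $Tf\in\sS(A_2)$, that $T\neq 0$, and that $T$ commutes with the Heisenberg group $H(W)$ action; by Stone--von Neumann $T$ is then an isomorphism. A short formal step promotes $T$ to an $\Mp(W)$-intertwiner: for each $\tilde g\in\Mp(W)$ over $g$, the operators $T^{-1}\rho_2(\tilde g)T$ and $\rho_1(\tilde g)$ both conjugate the ($T$-identified) $H(W)$-action by $g$, hence differ by a scalar $\mu(\tilde g)$; the resulting homomorphism $\mu\colon\Mp(W)\to\mathbb{C}^\times$ is trivial on $\mathbb{C}^\times$ (both models have central character $z\mapsto z$), so it descends to $\Sp(W)$, where it is trivial because $\Sp(W)$ is perfect. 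Thus $\rho_2(\tilde g)\circ T=T\circ\rho_1(\tilde g)$ for all $\tilde g\in\Mp(W)$.

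The remaining ingredient is the elementary observation that for $g\in\Sp_{A_1}\cap\Sp_{A_2}$ the operator $T$ also intertwines the two ``translation'' operators $f\mapsto f(g^{-1}\cdot)$: since $g$ stabilizes both $A_1$ and $A_2$ it permutes a transversal of $A_1\cap A_2$ in $A_2$, and since $g$ preserves $\langle\,,\,\rangle$ it preserves the twisting factor, so the identity is just a change of summation variable. Combining these facts,
\[
\rho_2(\omega_{A_1}(g))=T\circ\rho_1(\omega_{A_1}(g))\circ T^{-1}
=T\circ\bigl[\,f\mapsto f(g^{-1}\cdot)\,\bigr]\circ T^{-1}
=\bigl[\,f\mapsto f(g^{-1}\cdot)\,\bigr]=\rho_2(\omega_{A_2}(g)),
\]
which yields $\omega_{A_1}(g)=\omega_{A_2}(g)$. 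The main obstacle is the middle step: writing down the correct normalization of $T$ and verifying its $H(W)$-equivariance and nonvanishing on self-dual lattices; everything after that is formal, and in fact this computation is essentially the one already carried out in \cite{Pan01} and \cite[\SSS{4.1}]{SZ}, so one may simply adapt it.
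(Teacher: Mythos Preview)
Your proposal is correct and follows essentially the same route as the paper: both construct the explicit intertwining operator between the two lattice models (the paper writes it as an integral $(\Xi f)(w)=\int_{A_2}\psi(\tfrac12\langle a,w\rangle)f(w+a)\,da$, which is your sum over a transversal up to normalization), and then verify by a change of variable that for $g\in\Sp_{A_1}\cap\Sp_{A_2}$ this operator intertwines the two translation actions $f\mapsto f(g^{-1}\cdot)$. The only difference is that you spell out in full the Stone--von~Neumann/perfectness argument showing the intertwiner is $\Mp(W)$-equivariant, whereas the paper simply asserts that the intertwiner is unique up to scalar and leaves the final deduction implicit.
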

\begin{proof}
  We have an intertwining operator $\Xi \colon \sS(A_1) \rightarrow \sS(A_2)$
  given by $(\Xi\,f)(w) = \int_{A_2}\psi(\half\inn{a}{w}) f(w+a) \da$ between
  the two lattice models. This intertwining operator is unique up to scalar.

Let $g\in \Sp_{A_1}\cap \Sp_{A_2}$. 
Since $g\colon A_2 \rightarrow A_2$ is measure preserving,
\[
\begin{split}
  ((\omega_{A_2}(g)\circ \Xi f)(w) 
  =& \int_{A_2}\psi(\half \inn{a}{g^{-1}w}) f(g^{-1}w +a) \da\\
  = & \int_{A_2}\psi(\half \inn{ga}{w}) f(g^{-1}w +a) \da \\
  = & \int_{A_2}\psi(\half \inn{a}{w}) f(g^{-1}w + g^{-1}a) \da \\
  = & (\Xi \circ \omega_{A_1}(g) f)(w).
\end{split}
\]
This proves the lemma.
\end{proof}

\subsection{} \label{sec:wplus}
Let $x\in \cB(\bfG,k)$. We pick any $x'\in \cB(\bfG',k)$ and let $\sL$ and
$\sL'$ be the lattice functions corresponding to $x$ and $x'$ respectively.  Let
$\sB = \sL\otimes \sL'$ be the tensor product lattice function and 
$A$ be any
self-dual lattice such that $\sB_{0^+} \subseteq A \subseteq \sB_{0}$.
We have $G_{x,0^+}$ stabilizes $A$, i.e.  $G_{x,0^+} \subseteq \SpA$ (see also \cite[\S
3.3.2]{Pan}).  As a corollary of \Cref{lem:SS}, the
lattice models give a canonical splitting
\[
\omega_+ \colon  \bigcup_{x\in \cB(\bfG,k)} G_{x,0^+} \longrightarrow \tG.
\]

\begin{bibdiv}
\begin{biblist}

\bib{Ad}{article}{
  title={L-Functoriality for Dual Pairs},
  author={Adams, Jeffrey},
  journal={Asterisque},
  volume={171-172},
  pages={85-129},
  year={1989},
}

\bib{AR}{article}{
    author = {Adler, Jeffrey D.},
    author = {Roche, Alan},
    title = {An intertwining result for $p$-adic groups},
    journal = {Canad. J. Math},
    year = {200},
    volume = {52},
    number = {3},
    pages = {449--467},
}

\bib{BS}{article}{
  title={Buildings of classical groups and centralizers of Lie algebra elements},
  author={Broussous, Paul},
  author={Stevens, Shaun},
  journal={Journal of Lie Theory},
  volume = {19},
  pages = {55-78},
  year={2009},
}

\bib{BT3}{article}{
author = {Bruhat, F.},
author = {Tits, J.},
journal = {Bull. Math. Soc. France},
pages = {259-301},
title = {Sch\'{e}mas en groupes et immeubles des groupes classiques sur un corps local},
volume = {112},
year = {1984},
}

\bib{BT4}{article}{
  title={Sch\'{e}mas en groupes et immeubles des groupes classiques
    sur un corps local, II. Groupes unitaires},
  author={Bruhat, F.},
  author={Tits, J.},
  journal={Bull. Math. Soc. France},
  volume = {115},
  pages = {141-195},
  year={1987},
}

\bib{GT}{article}{
title={A proof of the Howe duality conjecture},
author={Gan, Wee Teck},
author={Takeda, Shuichiro},
journal={To appear in Journal of AMS, arXiv:1407.1995},
year={2014},
}

\bib{GW}{book}{
    title={Symmetry, representations, and invariants},
  author={Goodman, Roe},
	author={Wallach, Nolan R},
	series={GTM},
  volume={255},
  year={2009},
  publisher={Springer},
}

\bib{GY}{article}{
author={Prasad, Gopal},
author={Yu, Jiu-Kang}, 
title={On finite group actions on reductive groups and buildings},
journal={Inventiones mathematicae},
volume={147},
number={3},
pages={545-560},
year={2202}
}

\bib{Gross}{article}{
author={B. Gross},
author={P. Levy},
author={M. Reeder},
author={J.-K. Yu},
title={Gradings of positive rank on simple Lie algebras},
journal={Transformation Groups},
volume={17},
number={4},
pages={1123--1190},
year={2012}
}

\bib{Ho}{article}{
  title={$\theta$-series and invariant theory},
  author={Howe, Roger},
  journal={Proc. Symp. Pure Math. },
  volume = {33},
  pages = {Part 1, 275-285},
  year={1979}
}

\bib{Howe95}{article}{,
  author = {Howe, Roger},
  title = {Perspectives on invariant theory: Schur duality, multiplicity-free actions and beyond},
  journal = {Piatetski-Shapiro, Ilya (ed.) et al., The Schur lectures (1992). Ramat-Gan: Bar-Ilan University, Isr. Math. Conf. Proc. 8},
  year = {1995},
  pages = {1--182},
}

\bib{Kim}{article}{
   title = {Supercuspidal Representations: An Exhaustion Theorem},
   author = {Kim, Ju-Lee},
   journal = {Journal of the American Mathematical Society},
   volume = {20},
   number = {2},
   pages = {273--320},
   ISSN = {08940347},
   year = {2007},
   publisher = {American Mathematical Society},
}

\bib{KM}{article}{
    author = {Kim, Ju-Lee},
author = {Murnaghan, Fiona},
    title = {Character expansions and unrefined minimal $K$-types},
 journal = {American Journal of Mathematics},
volume={125},
number={6},
pages={1199--1234},
 year = {2003}, 
}

\bib{Le}{article}{
    author = {Lemaire, Bertrand},
    title = {Comparison of lattice filtrations and Moy-Prasad filtrations for classical groups},
 journal = {Journal of Lie Theory},
volume={19},
number={1},
pages={029--054},
 year = {2008},
}

\bib{Levy}{article}{
author={P. Levy},
title={Vinberg’s $\theta$-groups in positive characteristic and
Kostant–Weierstrass slices},
journal={Transformation Groups},
volume={14},
number={2},
pages={417-461},
year={2009}
}

\bib{Mc}{article}{
  author={McNinch, George J.},
  title={Levi Factors of the Special Fiber of a Parahoric Group Scheme and Tame Ramification},
  journal={Algebras and Representation Theory},
  volume={17},
  number={2},
  pages={469--479},
  year={2014},
}

\bib{MVW}{book}{
  author={M\oe glin, C.},
  author={Vigneras, M. F.},
  author={Waldspurger, J.-L.},
title={Correspondances de Howe sur un corps $p$-adique},  
series={Lecture Notes in Mathematics},
  volume={1291},
  year={1987},
  publisher={Springer Verlag, Berlin},
}

\bib{Moen93}{article}{
  author = {Moen, Courtney},
  journal = {Pacific J. Math.},
  number = {2},
  pages = {365--386},
  title = {The dual pair $({\rm U}(1),{\rm U}(1))$ over a $p$-adic field.},
  volume = {158},
  year = {1993}
}

\bib{MP}{article}{
 author = {Moy, A},
 author = {Prasad, G},
  title = {Unrefined minimal K-types for p-adic groups}, 
journal = {Inventiones Math.},
volume = {116}, 
year = {1994},
pages = {393-408},
}

\bib{Pan01}{article}{
author = {Pan, Shu-Yen},
journal = {Pacific J. Math.}, 
title = {Splittings of the metaplectic covers of some reductive dual pairs},
volume = {199},
year = {2001},
number = {1},
pages = {163-226},
} 

\bib{Pan02}{article}{
author = {Pan, Shu-Yen},
journal = {Duke Math. J.},
number = {3},
pages = {531--592},
title = {Depth preservation in local theta correspondence},
volume = {113},
year = {2002},
}

\bib{Pan}{article}{
  author = {Pan, Shu-Yen},
  title = {Local theta correspondence and minimal $K$-types of positive depth},
   journal = {Israel J. Math.},
year = {2003},
volume = {138},
pages = {317-352},
}

\bib{RY}{article}{
   author = {Reeder, Mark},
   author = {Yu, J.-K.},
    title = {Epipelagic representations and invariant theory},
  journal = {J. Amer. Math. Soc.},
volume={27},
year={2014},
pages = {437-477},
}

\bib{SZ}{article}{
  title={Conservation relations for local theta correspondence},
  author = {Sun, Binyong},
  author = {Zhu, Chen-Bo},
  journal = {to appear J. Amer. Math. Soc.},
  year={2014}
}

\bib{Tits}{inproceedings}{ 
title={Reductive groups over local fields}, 
author={Tits, J.}, 
booktitle={Automorphic forms, representations and L-functions I, Amer. Math. Soc., Providence, R.I}, 
date={1979},
pages = {29-69},
}

\bib{Vin}{article}{,
  author = {E. B. Vinberg},
  title = {The Weyl group of a graded Lie algebra},
  journal = {Izv. Akad. Nauk SSR},
  year = {1976},
volume = {40},
number ={3},
  pages = {463-495},
}

\bib{Wa}{article}{
   author = {Waldspurger, J.-L.},
    title = {D\'{e}monstration d’une conjecture de dualit\'{e} de Howe dans le cas $p$-adique, $p \neq 2$ in Festschrift in honor of I. I. Piatetski-Shapiro on the occasion of his sixtieth birthday},
  journal = {Israel Math. Conf. Proc., 2, Weizmann, Jerusalem},
 year = {1990},
pages = {267-324},
}

\end{biblist}
\end{bibdiv}

\end{document}